\newcommand{\RR}{\mathbb{R}}
\newcommand{\QQ}{\mathbb{Q}}
\newcommand{\NN}{\mathbb{N}}
\newcommand{\umin}{u_\mathrm{min}}
\newcommand{\AAA}{\mathcal{A}}
\newcommand{\TTT}{\mathcal{T}}
\newcommand{\MMM}{\mathcal{M}}
\newcommand{\SSS}{\mathcal{S}}
\newcommand{\ph}{\varphi}
\newcommand{\eps}{\varepsilon}
\newcommand{\di}{\partial}
\newcommand{\llim}{\varliminf}
\newcommand{\ulim}{\varlimsup}
\newcommand{\htop}{h_\mathrm{top}\,}
\newcommand{\Mid}{\,\Big|\,}
\renewcommand{\QQ}{\textbf{(Q)}}
\newcommand{\tG}{\tilde{G}}
\newcommand{\PPP}{\mathcal{P}}
\newcommand{\FFF}{\mathcal{F}}
\newcommand{\PP}{\textbf{(P)}}
\newcommand{\Ka}{{K(\alpha)}}
\newcommand{\KA}{{K(A)}}
\newcommand{\Mf}{\MMM^f}
\newcommand{\Mfe}{\MMM^f_E}
\newcommand{\Mfa}{\MMM^f_\alpha}
\DeclareMathOperator{\spn}{span}
\DeclareMathOperator{\inter}{int}
\newcommand{\uP}{\overline{CP}}
\newcommand{\lP}{\underline{CP}}
\newcommand{\lF}{\underline{\FFF}}
\newcommand{\uF}{\overline{\FFF}}
\theoremstyle{plain}
\newtheorem{theorem}{Theorem}[section]
\newtheorem{proposition}[theorem]{Proposition}
\newtheorem{corollary}[theorem]{Corollary}
\newtheorem{lemma}[theorem]{Lemma}
\newtheorem{thma}{Theorem}
\newtheorem*{thma*}{Theorem}
\theoremstyle{definition}
\newtheorem{definition}[theorem]{Definition}
\theoremstyle{remark}
\newtheorem{example}[theorem]{Example}
\newtheorem{remark}[theorem]{Remark}
\numberwithin{equation}{section}
\title{Topological pressure of simultaneous level sets}
\author{Vaughn Climenhaga}
\address{Department of Mathematics \\ University of Houston \\ Houston, TX  77204, USA}
\email{climenha@math.uh.edu}
\urladdr{http://www.math.uh.edu/$\sim$climenha/}
\begin{document}

\date{\today}
\begin{abstract}
Multifractal analysis studies level sets of asymptotically defined quantities in a topological dynamical system.  We consider the topological pressure function on such level sets, relating it both to the pressure on the entire phase space and to a conditional variational principle.  We use this to recover information on the topological entropy and Hausdorff dimension of the level sets.

Our approach is thermodynamic in nature, requiring only existence and uniqueness of equilibrium states for a dense subspace of potential functions.  Using an idea of Hofbauer, we obtain results for all continuous potentials by approximating them with functions from this subspace.  

This technique allows us to extend a number of previous multifractal results from the $C^{1+\eps}$ case to the $C^1$ case.  We consider ergodic ratios $S_n \ph/S_n \psi$ where the function $\psi$ need not be	 uniformly positive, which lets us study dimension spectra for non-uniformly expanding maps.  Our results also cover coarse spectra and level sets corresponding to more general limiting behaviour.
\end{abstract}


\maketitle

\section{Introduction}

A general framework for multifractal analysis of dynamical systems was laid out in~\cite{BPS97,yP97}.  Broadly speaking, one begins with the following elements:
\begin{enumerate}
\item a topological dynamical system $f\colon X\to X$;
\item a local asymptotic quantity $\phi(x)$ that depends on $x\in X$ and takes values in $\RR^d$, usually in a highly discontinuous manner (typically, each level set is dense);
\item a global dimensional quantity that assigns to each level set of $\phi$ a ``size'' or ``complexity'', such as its topological entropy or Hausdorff dimension.
\end{enumerate}
The level sets $K(\alpha) = \{x\in X \mid \phi(x) = \alpha\}$ form a \emph{multifractal decomposition}, and the function $\alpha\mapsto \dim K(\alpha)$ is a \emph{multifractal spectrum}.

There are two approaches to computing multifractal spectra: the thermodynamic approach and the orbit-gluing approach~\cite{vC11b}.  We use the thermodynamic approach, in which the dimension of the level set $K(\alpha)$ is evaluated by producing an $f$-invariant measure $\mu_\alpha$ supported on $K(\alpha)$ with $\dim \mu_\alpha = \dim K(\alpha)$ as an equilibrium state for the appropriate potential function.  It was observed in~\cite{BS01} that the key tool for this approach is differentiability of the pressure function.  In particular, if the pressure function is differentiable on the subspace of potentials spanned by the functions appearing in the definitions of $\phi$ and $\dim$, then every level set $K(\alpha)$ has such a measure $\mu_\alpha$ associated to it.

In~\cite{BSS02}, Barreira, Saussol, and Schmeling extended this approach to higher-dimensional multifractal spectra, where the local quantity $\phi$ takes values in $\RR^d$ for some $d>1$.  More precisely, they considered functions $\Phi =  \{\ph_1,\dots,\ph_d\}, \Psi = \{\psi_1,\dots,\psi_d\}\in C(X)^d$ with $\psi_i>0$ and examined the level sets
\[
\Ka = \left\{ x\in X \Mid \lim_{n\to\infty} \frac{\ph_i(x) + \ph_i(f(x)) + \cdots + \ph_i(f^n(x))}{\psi_i(x) + \psi_i(f(x)) + \cdots + \psi_i(f^n(x))} = \alpha_i \text{ for all } i\right\}
\]
for $\alpha\in \RR^d$, obtaining the following result. 

\begin{theorem}[\protect{\cite[Theorem 8]{BSS02}}]\label{thm:BSS}
Let $X$ be a compact metric space and $f\colon X\to X$ a continuous map with upper semi-continuous metric entropy.  Suppose $\ph_i,\psi_i\in C(X)$ are such that $\psi_i>0$ for every $i$ and every potential in $\spn\{\ph_1,\dots,\ph_d,\psi_1,\dots,\psi_d\}$ has a unique equilibrium state.

Let $I(\Phi,\Psi) = \left\{ \left( \frac{\int \ph_1\,d\mu}{\int \psi_1\,d\mu}, \dots, \frac{\int \ph_d\,d\mu}{\int \psi_d\,d\mu} \right) \mid \mu\in \Mf(X)\right\}$, where $\Mf(X)$ is the space of $f$-invariant Borel probability measures on $X$.  Then $\Ka=\emptyset$ for $\alpha\notin I(\Phi,\Psi)$, while for $\alpha\in \inter I(\Phi,\Psi)$, we have
\begin{equation}\label{eqn:BSS}
\begin{aligned}
\htop \Ka &= \inf \left\{ P\left( \sum_{i=1}^d q_i(\ph_i - \alpha_i \psi_i) \right) \Mid q\in \RR^d \right\} \\
&= \max \left\{ h_\mu(f) \Mid \mu\in \Mf(X) \text{ and } \frac{\int\ph_i\,d\mu}{\int\psi_i\,d\mu} = \alpha_i \text{ for all } i \right\} \\
&= \max \{ h_\mu(f) \mid \mu\in \Mf(X) \text{ and } \mu(\Ka) = 1 \}.
\end{aligned}
\end{equation}
\end{theorem}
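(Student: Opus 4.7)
The plan is to follow the standard thermodynamic strategy: bound $\htop\Ka$ above by pressure for every $q\in\RR^d$, realize the infimum of pressures as the entropy of a distinguished equilibrium state using the uniqueness hypothesis, and then show this entropy is also a lower bound by verifying that the equilibrium state gives full mass to $\Ka$. I would carry this out in four steps.

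\textbf{Step 1 (Non-emptiness).} If $x\in\Ka$ and $\mu$ is any weak-$*$ limit of the empirical measures $\frac{1}{n}\sum_{k=0}^{n-1}\delta_{f^k x}$, then $\mu\in\Mf(X)$, and because $\psi_i$ is bounded below by a positive constant on the compact space $X$, dividing $\int\ph_i\,d\mu = \lim n^{-1}S_n\ph_i(x)$ by $\int\psi_i\,d\mu = \lim n^{-1}S_n\psi_i(x)$ gives $\alpha_i$. Thus $\Ka\neq\emptyset$ forces $\alpha\in I(\Phi,\Psi)$.

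\textbf{Step 2 (Upper bound).} For $q\in\RR^d$ set $g_q:=\sum_i q_i(\ph_i-\alpha_i\psi_i)$. For $x\in\Ka$,
\[
\frac{1}{n}S_n g_q(x)=\sum_i q_i\cdot\frac{S_n\psi_i(x)}{n}\left(\frac{S_n\ph_i(x)}{S_n\psi_i(x)}-\alpha_i\right)\longrightarrow 0,
\]
using that $n^{-1}S_n\psi_i(x)$ is bounded by $\|\psi_i\|_\infty$. A standard Pesin--Pitskel / Bowen covering argument for the topological entropy of a set on which a continuous potential has vanishing Birkhoff average then yields $\htop\Ka\leq P(g_q)$, so $\htop\Ka\leq\inf_q P(g_q)$.

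\textbf{Step 3 (Infimum of pressure equals constrained supremum of entropy).} The variational principle together with the observation $\int g_q\,d\mu=0$ for any $\mu$ satisfying the ratio condition gives
\[
\inf_q P(g_q)\;\geq\;\sup\Bigl\{h_\mu(f) \Mid \mu\in\Mf(X),\;\tfrac{\int\ph_i\,d\mu}{\int\psi_i\,d\mu}=\alpha_i\text{ for all }i\Bigr\}.
\]
For the reverse inequality I would invoke the uniqueness hypothesis: since every potential in $\spn\{\ph_1,\dots,\ph_d,\psi_1,\dots,\psi_d\}$ has a unique equilibrium state, the function $q\mapsto P(g_q)$ is convex and Gâteaux differentiable on $\RR^d$, with $\partial_{q_i}P(g_q)=\int(\ph_i-\alpha_i\psi_i)\,d\mu_q$ where $\mu_q$ is the unique equilibrium state of $g_q$. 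The hypothesis $\alpha\in\inter I(\Phi,\Psi)$ ensures $P(g_q)\to+\infty$ in every direction as $\abs{q}\to\infty$ (the supporting hyperplane to $I(\Phi,\Psi)$ would otherwise obstruct $\alpha$ from lying in the interior), so the infimum is attained at some $q^*$. At $q^*$ the gradient vanishes, so $\mu_{q^*}$ satisfies the ratio condition and $P(g_{q^*})=h_{\mu_{q^*}}(f)+\int g_{q^*}\,d\mu_{q^*}=h_{\mu_{q^*}}(f)$, which realizes the supremum as a maximum.

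\textbf{Step 4 (Lower bound and the third equality).} As a unique equilibrium state, $\mu_{q^*}$ is ergodic; combined with the ratio condition and Birkhoff's theorem this gives $\mu_{q^*}(\Ka)=1$. The usual variational principle for entropy on non-compact sets then yields $\htop\Ka\geq h_{\mu_{q^*}}(f)$, closing the loop. For the equivalence of the second and third maxima, the inclusion $\{\mu:\mu(\Ka)=1\}\subseteq\{\mu:\text{ratio condition}\}$ follows by applying Birkhoff to the ergodic decomposition and integrating, while the reverse direction is immediate because the middle maximum is attained at the ergodic measure $\mu_{q^*}$, which has $\mu_{q^*}(\Ka)=1$.

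The main obstacle is Step 3: the minimax transition from $\inf_q P(g_q)$ to a maximum of entropies constrained by the ratio condition. This is where the uniqueness assumption does real work via differentiability of the pressure, and where the interior hypothesis on $\alpha$ is essential to guarantee the infimum is attained at a finite $q^*$. Steps 2 and 4 are standard once $\mu_{q^*}$ is in hand.
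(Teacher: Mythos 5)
Your argument is correct and is essentially the classical route of Barreira--Saussol and Barreira--Saussol--Schmeling: upper semi-continuity of entropy plus uniqueness of equilibrium states on the finite-dimensional subspace $\spn\{\ph_1,\dots,\ph_d,\psi_1,\dots,\psi_d\}$ gives Gâteaux differentiability of $q\mapsto P(g_q)$, Ruelle's formula identifies $\nabla_q P(g_q)$ with $\int(\Phi-\alpha*\Psi)\,d\mu_q$, the interior hypothesis on $\alpha$ supplies the coercivity needed to produce a finite minimizer $q^*$, and the vanishing gradient at $q^*$ delivers the distinguished ergodic equilibrium state $\mu_{q^*}$ that closes the chain of inequalities and furnishes all three maxima. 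This is almost certainly the argument of \cite[Theorem 8]{BSS02}.

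The paper, however, treats Theorem~\ref{thm:BSS} as a quoted starting point rather than reproving it: what it proves is the strict generalization Theorem~\ref{thm:main2} (whence Theorems~\ref{thm:main0} and~\ref{thm:BSS} follow), and the method there diverges from yours precisely at your Step~3. In place of differentiability of the pressure and Ruelle's derivative formula, the paper uses Proposition~\ref{prop:M'cvp}: for each $\delta>0$ one constructs a family $q\mapsto\nu_q^\delta$ of \emph{ergodic} measures that depend continuously on $q$ and $\delta$-approximately realize the variational principle, and then a homotopy/degree argument (Lemma~\ref{lem:intzero}) locates a $q$ with $\int(\Phi-\alpha*\Psi)\,d\nu_q^\delta=0$. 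No derivative of $P$ is taken, and no exact equilibrium state for $g_q$ itself is required. The continuous family is produced by approximating $\Phi,\Psi,\xi$ with elements of the dense subspace $D$ on which equilibrium states are unique, an idea the paper credits to Hofbauer. What this buys is the added generality the paper is after: the conclusion holds for \emph{all} continuous $\ph_i,\psi_i,\xi$, not only those in the span on which $P$ is smooth; $\psi_i>0$ is relaxed to condition~\QQ, which reaches non-uniformly expanding maps; and, correspondingly, the maxima in~\eqref{eqn:BSS} must be weakened to suprema, since the minimizing equilibrium state your Step~3 produces need not exist in the wider setting. Under the narrower hypotheses of Theorem~\ref{thm:BSS} your more direct argument is fully adequate.
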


\begin{remark}
In fact, Theorem~\ref{thm:BSS} is a specialisation of the result in~\cite{BSS02}, which considers the more general $u$-dimension introduced in~\cite{BS00} in place of the topological entropy.  We state the simpler version here for ease of exposition.
\end{remark}

We prove the following generalisation of Theorem~\ref{thm:BSS}.

\begin{thma}\label{thm:main0}
Let $X$ be a compact metric space and $f\colon X\to X$ a continuous map such that the entropy map $\Mf(X) \to \RR$ is upper semi-continuous and $\htop(f) < \infty$.  Suppose that there is a dense subspace $D \subset C(X)$ such that every $\phi\in D$ has a unique equilibrium state.

Let $\Phi,\Psi\in C(X)^d$ be such that $\int\psi_i\,d\mu \geq 0$ for all $\mu\in\Mf(X)$ and $1\leq i\leq d$, with equality only permitted if $\int\ph_i\,d\mu \neq 0$.  Then $\Ka = \emptyset$ for every $\alpha\notin I(\Phi,\Psi)$, while for every $\alpha\in \inter I(\Phi,\Psi)$, we have
\begin{equation}\label{eqn:main0}
\begin{aligned}
\htop \Ka &= \inf \left\{ P\left( \sum_{i=1}^d q_i(\ph_i - \alpha_i \psi_i) \right) \Mid q\in \RR^d \right\} \\
&= \sup \left\{ h_\mu(f) \Mid \mu\in \Mf(X) \text{ and } \frac{\int\ph_i\,d\mu}{\int\psi_i\,d\mu} = \alpha_i \text{ for all } i \right\} \\
&= \sup \{ h_\mu(f) \mid \mu\in \Mf(X) \text{ and } \mu(\Ka) = 1 \}.
\end{aligned}
\end{equation}
\end{thma}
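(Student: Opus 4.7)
The plan is to decompose the equality~\eqref{eqn:main0} into a chain of inequalities, handling the approximation from the dense subspace $D$ separately from the core multifractal argument. First, $\Ka = \emptyset$ for $\alpha \notin I(\Phi,\Psi)$ follows from a Birkhoff-type observation: given $x \in \Ka$, any weak-$*$ accumulation point $\mu$ of the empirical measures $\frac{1}{n}\sum_{k=0}^{n-1}\delta_{f^k x}$ is $f$-invariant, and the ratio condition defining $\Ka$ passes to $\mu$, so $\alpha \in I(\Phi,\Psi)$. The hypothesis on $\psi_i$ is precisely what keeps these ratios well-defined along the limiting process.

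For $\alpha \in \inter I(\Phi,\Psi)$, the upper bound $\htop \Ka \leq \inf_q P(\sum_i q_i(\ph_i - \alpha_i\psi_i))$ is a standard covering argument: every $x \in \Ka$ satisfies $S_n \ph_i(x) - \alpha_i S_n \psi_i(x) = o(n)$, so for each fixed $q$ the Birkhoff sum of $\sum_i q_i(\ph_i - \alpha_i \psi_i)$ along any orbit in $\Ka$ is sublinear, and a Bowen-ball pressure estimate delivers the bound. The analogous bound on $\sup\{h_\mu : \mu(\Ka) = 1\}$ is immediate from the variational principle applied to any admissible $\mu$, whose Birkhoff integrals satisfy the ratio equality.

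The substance of the proof is the matching lower bound, which requires producing, for an approximate minimiser $q^*$ of the pressure $q \mapsto P(\sum_i q_i(\ph_i - \alpha_i\psi_i))$, an $f$-invariant measure $\mu$ whose entropy realises this pressure and whose ratios equal $\alpha$. Classically one would differentiate the pressure function at $q^*$ and invoke uniqueness of the equilibrium state, but in our setting the potentials $\ph_i, \psi_i$ need not lie in $D$, so that uniqueness is unavailable. Following Hofbauer's approximation strategy, I would choose sequences $\ph_i^{(k)}, \psi_i^{(k)} \in D$ converging uniformly to $\ph_i, \psi_i$, select $q^{(k)}$ approximating minimisers of the corresponding perturbed pressure function, and pass to a weak-$*$ limit $\mu$ of the unique equilibrium states $\mu_k$. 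Upper semi-continuity of the entropy map, together with uniform convergence of the potentials, then yields that $\mu$ is an equilibrium state for $\sum_i q_i^*(\ph_i - \alpha_i\psi_i)$, and a convex-duality argument at the minimiser forces $\int \ph_i\,d\mu = \alpha_i \int \psi_i\,d\mu$. Passing to an ergodic component of $\mu$ preserves the ratios and produces a measure supported on $\Ka$; its Brin--Katok local entropy equals $h_\mu$ and therefore bounds $\htop \Ka$ from below, closing the chain.

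The main obstacle will be the interplay of the two non-classical features: the dense-subspace approximation must be arranged so that the approximating equilibrium states $\mu_k$ remain compatible with the feasible region $I(\Phi^{(k)}, \Psi^{(k)})$ near $\alpha$, which requires a careful choice of $\alpha^{(k)} \to \alpha$; and the non-uniform positivity of $\psi_i$ must be handled so that the limiting identification of ratios does not collapse into a $0/0$ indeterminacy. The hypothesis $\alpha \in \inter I(\Phi,\Psi)$, combined with the condition that $\int \psi_i\,d\mu = 0$ forces $\int \ph_i\,d\mu \neq 0$, should suffice to guarantee $\int \psi_i\,d\mu > 0$ for the limit measure, so that the ratio identification goes through genuinely and yields the three-way equality.
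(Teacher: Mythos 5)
Your decomposition is right in outline, and the empty-level-set argument and the upper bound are both fine. The genuine gap is in the last step of the lower bound: ``passing to an ergodic component of $\mu$ preserves the ratios.'' It does not. If $\mu$ satisfies $\int(\Phi-\alpha*\Psi)\,d\mu=0$ but is not ergodic, the ergodic components $\nu$ of $\mu$ satisfy $\int(\Phi-\alpha*\Psi)\,d\nu=0$ only on average, not individually; and while some component must have $h_\nu\geq h_\mu$, those need not be the ones with the right integrals. Concretely, $\mu=\tfrac12(\nu_1+\nu_2)$ with $\int(\ph-\alpha\psi)\,d\nu_1 = -\int(\ph-\alpha\psi)\,d\nu_2\neq 0$ has zero integral while neither component lies in $\Mfa(X)$ and $\mu(K(\alpha))=0$. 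So the weak-$*$ limit argument, even if carried out carefully, proves only $\hat\TTT(\alpha,\xi)=\SSS(\alpha,\xi)$ (the second line of \eqref{eqn:main0}); it does not produce a measure in $\Mf(\Ka)$ of large entropy, which is needed to close the chain $\TTT\leq P_{\Ka}\leq\SSS=\hat\TTT$ into equalities.

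The paper avoids this precisely by never taking a limit of measures. Instead, for each $\delta>0$ it approximates the potentials by elements of $D$, uses Lemma~\ref{lem:minq} to confine attention to a ball $B(0,R)$ of $q$-values, and applies a topological degree argument (Lemma~\ref{lem:intzero} inside Proposition~\ref{prop:M'cvp}) to the continuous map $q\mapsto\int(\Phi-\alpha*\Psi)\,d\nu_q^\delta$, where $\nu_q^\delta$ is the \emph{unique} (hence ergodic) equilibrium state for the $D$-approximated potential. The degree argument finds $q(\delta)\in B(0,R)$ with $\int(\Phi-\alpha*\Psi)\,d\nu_{q(\delta)}^\delta=0$ \emph{exactly}, for the true $\Phi,\Psi$; ergodicity together with \QQ\ then gives $\nu_{q(\delta)}^\delta(\Ka)=1$, and its free energy is within $\delta$ of $\SSS(\alpha,\xi)$. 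Thus the supremum over $\Mf(\Ka)$ is reached by ergodic measures from the approximating family, with no passage to a weak-$*$ limit and no ergodic decomposition needed. To repair your argument you would either need to imitate this boundary/degree step to kill the constraint exactly on the approximating (hence ergodic) measures, or find a separate argument that $\hat\TTT(\alpha,\xi)=\TTT(\alpha,\xi)$; the paper's approach does the former.
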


Theorem~\ref{thm:main0} generalises Theorem~\ref{thm:BSS} in two ways.

\begin{enumerate}
\item The result applies to \emph{all} continuous functions, not just those whose span lies inside the collection of potentials  with unique equilibrium states.  In Theorems \ref{thm:Lyapspec}--\ref{thm:dimspec} in \S \ref{sec:dimspec}, we use this added generality to extend multifractal results for various dimension spectra from the $C^{1+\eps}$ case to the $C^1$ case.
\item We weaken the hypothesis that $\psi_i > 0$. 
In \S \ref{sec:parabolic}, this lets us obtain results for non-uniformly expanding maps that had previously only been shown for uniformly expanding maps.
\end{enumerate}

In most previous multifractal literature, the added generality of treating \emph{all} continuous functions has only been available using the orbit-gluing approach, which relies on a version of the specification property.  Using an idea of Hofbauer~\cite{fH95,fH10b}, we obtain this generality with the thermodynamic approach, provided there is a dense subspace of $C(X)$ comprising potentials with unique equilibrium states.  Broadly speaking, this hypothesis is satisfied for systems satisfying a version of the specification property~\cite{rB75b, CT11}, and so our result should apply to a similar class of systems as the orbit-gluing approach does.

The advantage of the thermodynamic approach over the orbit-gluing approach is that it establishes the conditional variational principle that is the final equality in~\eqref{eqn:main0}; the orbit-gluing approach gives no information on invariant measures supported on the level sets~\cite{vC11b}.

We observe that the weakening of the hypothesis forces us to replace the maxima in the last two lines of~\eqref{eqn:BSS} with suprema, which may not be achieved.

Theorem~\ref{thm:main0} is a special case of our main result, Theorem~\ref{thm:main2}, which adds several additional generalisations.
\begin{enumerate}
\item Instead of topological entropy, we study the topological pressure $P_{\Ka}(\xi)$ for $\xi\in C(X)$, which carries enough information to determine both the topological entropy and the $u$-dimension (the full statement of Theorem 1.1 in~\cite{BSS02} treats $u$-dimension).  When $\xi=0$, we recover the topological entropy, while for $\xi = -tu$ with $t\in \RR$ and $u\in C(X)$, we can use Bowen's equation to obtain the $u$-dimension and offer a proper generalisation of~\cite[Theorem 8]{BSS02}; see Theorem~\ref{thm:dimu} in \S \ref{sec:dimu}.  In particular, when $f$ is conformal this can be used to compute the Hausdorff dimension.
\item We show that the quantities in~\eqref{eqn:main0} are also equal to the \emph{coarse multifractal spectra} in~\eqref{eqn:coarse} below, which have not been studied before in this context.  When $\mu$ is a Gibbs measure for $\xi$, the coarse spectra associated to $P_\Ka(\xi)$ are intimately related to large deviations estimates for $\mu$, which were studied in~\cite{yK90,CRL11} under hypotheses nearly identical to those in our main theorem.
\item We replace the level sets $K(\alpha)$ with more general sets $K(A)$ for $A\subset \RR^d$ (see~\eqref{eqn:KA}).  This allows us to consider points whose asymptotic behaviour has limit points inside the fixed set $A$, but does not necessarily converge to any fixed $\alpha$.
\end{enumerate}

With appropriate choices of $\ph_i$ and $\psi_i$, the results of this paper can be used to study various well-known multifractal decompositions.  We mention two of the most important examples, which are discussed further in \S \ref{sec:app}.
\begin{enumerate}
\item With $\psi_i \equiv 1$, the sets $\Ka$ are level sets for Birkhoff averages of the functions $\ph_i$.  If $\mu_i$ is a weak Gibbs measure for $\ph_i$, then these Birkhoff averages determine the local entropies of $\mu_i$.  If $f$ is conformal and $\ph_i = \log \|Df\|$, we obtain the decomposition in terms of Lyapunov exponents.
\item For a conformal map $f$, with $\psi_i = \log \|Df\|$ and $\mu_i$ a weak Gibbs measure for $\ph_i$, the ratio $S_n\ph_i(x)/S_n\psi_i(x)$ converges to the pointwise dimension of $\mu_i$ at $x$, and we obtain the decomposition into level sets for pointwise dimension.
\end{enumerate}
Because we consider simultaneous level sets, one can easily consider decompositions into sets on which various combinations of the above quantities take specified values.  For the sake of simplicity in exposition, however, we focus on the case $d=1$, where only one quantity is specified.  Various new phenomena occur in the case $d>1$; these have been well studied in~\cite{BSS02}, and are not our principal concern here.

Section \ref{sec:defs} gives definitions and precise formulations of the results, and Section \ref{sec:app} gives various examples and applications.  Proofs of all results are given in Section~\ref{sec:pfs}.


\emph{Acknowledgments.}  I wish to thank the referee for comments that helped clarify and improve parts of the exposition.  Part of this research was carried out during a visit to the Pennsylvania State University; I am grateful for the hospitality of the mathematics department and my host, Yakov Pesin.

\section{Definitions and results}\label{sec:defs}

\subsection{Topological pressure}\label{sec:pressure}

Let $X$ be a compact metric space and $f\colon X\to X$ be continuous.  We assume throughout this paper that $\htop(f) < \infty$.

We recall the definition of the topological pressure $P_Z(\phi)$ for $\phi\in C(X)$ and $Z\subset X$.  For a given $\delta>0$ and $N\in\NN$, let $\PPP(Z,N,\delta)$ be the collection of countable sets $\{ (x_i,n_i) \} \subset Z\times \{N,N+1,\dots\}$ such that $Z \subset \bigcup_i B(x_i,n_i,\delta)$, where
\[
B(x,n,\delta) = \{y\in X \mid d(f^k(x),f^k(y)) < \delta \text{ for all } 0\leq k\leq n \}
\]
is the Bowen ball of order $n$ and radius $\delta$ centred at $x$.  For each $s\in\RR$, let
\begin{equation}\label{eqn:mZa}
m_P(Z,s,\phi,\delta)= \lim_{N\to\infty} \inf_{\PPP(Z,N,\delta)} \sum_{(x_i,n_i)} 
\exp\left(-n_i s + S_{n_i} \phi(x_i) \right),
\end{equation}
where $S_n\ph(x) = \ph(x) + \ph(f(x)) + \cdots + \ph(f^{n-1}(x))$.  The function $m_P$ 
takes values $\infty$ and $0$ at all but at most one value of $s$.  Writing
\begin{align*}
P_Z(\phi,\delta) &= \inf \{s\in\RR \mid m_P(Z,s,\phi,\delta)=0\} \\
&= \sup \{s\in\RR \mid m_P(Z,s,\phi,\delta)=\infty\},
\end{align*}
the \emph{topological pressure} of $\ph$ on $Z$ is
\[
P_Z(\phi) = \lim_{\delta\to 0} P_Z(\phi,\delta) = \sup_{\delta>0} P_Z(\phi,\delta).
\]
In the particular case $\phi=0$, this definition yields the topological entropy $\htop(Z) = P_Z(0)$, as defined by Bowen for non-compact sets~\cite{rB73}.

\begin{remark}
Formally, this definition differs slightly from the one given by Pesin and Pitskel'~\cite{PP84} (see also~\cite{yP97}).  However, as shown in~\cite[Proposition 5.2]{vC10a}, the two definitions yield the same value.
\end{remark}

\begin{remark}
We adopt the convention that $P_\emptyset(\phi) = -\infty$ for every $\phi\in C(X)$.
\end{remark}

We will also need to consider the capacity pressure, which we introduce in a slightly more general formulation than is standard.  To wit, fix a potential $\phi\in C(X)$ and a sequence of subsets $Z_n \subset X$.  For each $n\in \NN$ and $\delta>0$, let
\[
\Lambda_n(Z_n,\phi,\delta) = \inf \left\{ \sum_{x\in E} 
e^{S_n\phi(x)} \Mid \bigcup_{x\in E} B(x,n,\delta) \supset Z_n \right\}.
\]
Then the lower and upper capacity pressures of $\phi$ on the sequence $(Z_n)$ are
\begin{align*}
\lP_{(Z_n)}(\phi) &= \lim_{\delta\to 0} \llim_{n\to\infty} \frac 1n \log \Lambda_n(Z_n,\phi,\delta), \\
\uP_{(Z_n)}(\phi) &= \lim_{\delta\to 0} \ulim_{n\to\infty} \frac 1n \log \Lambda_n(Z_n,\phi,\delta).
\end{align*}
When the sequence $(Z_n)$ is constant -- that is, when there is $Z\subset X$ such that $Z_n = Z$ for every $n$ -- we recover the capacity pressure as considered in~\cite{yP97}.  In particular, when $Z_n = X$ for all $n$, we have
\begin{equation}\label{eqn:PX}
P_X(\phi) = \lP_X(\phi) = \uP_X(\phi) = P_X^*(\phi),
\end{equation}
where we write
\[
P_Z^*(\phi) = \sup \left\{ h_\mu(f) + \int \phi\,d\mu \Mid \mu\in \Mf(Z) \right\}
\]
for the \emph{variational pressure} on $Z\subset X$.  (Here $\Mf(Z)$ is the space of all $f$-invariant Borel probability measures supported on $Z$.)  When the pressure is evaluated on the whole space $X$, we will write merely $P(\phi)$ for the common value of the four quantities in~\eqref{eqn:PX}.

For arbitrary $Z\subset X$, one has
\begin{equation}\label{eqn:halfVP}
P_Z^*(\phi) \leq P_Z(\phi) \leq \lP_Z(\phi) \leq \uP_Z(\phi).
\end{equation}
The last inequality becomes equality when $Z$ is $f$-invariant, and all the inequalities become equalities when $Z$ is both $f$-invariant and compact~\cite{yP97}.  For non-compact $Z$, the second inequality is generally strict, whereas it is a question of great interest to determine for which sets $Z\subset X$ we have $P_Z^*(\phi) = P_Z(\phi)$.

\subsection{Multifractal decompositions and spectra}

Fix continuous functions $\ph_1,\dots,\ph_d$ and $\psi_1,\dots,\psi_d\in C(X)$.  Write $\Phi = (\ph_1,\dots,\ph_d) \in C(X)^d$, and similarly for $\Psi$.  Given $x\in X$ and $n\in \NN$, let
\begin{equation}\label{eqn:Anx}
\AAA_n(x) = \left( \frac{S_n \ph_1(x)}{S_n \psi_1(x)}, \dots, \frac{S_n\ph_d(x)}{S_n\psi_d(x)} \right) \in \RR^d.
\end{equation}
Let $\AAA_\infty(x)$ denote the set of limit points of the sequence $(\AAA_n(x))$, and given $A\subset \RR^d$, let
\begin{equation}\label{eqn:KA}
K(A) = \{x\in X \mid \AAA_\infty(X) \subset A \}.
\end{equation}
Given $\alpha\in \RR^d$, write $K(\alpha) = K(\{\alpha\})$ for the case when $A$ is the single point $\alpha$, and observe that
\begin{equation}\label{eqn:Ka}
\Ka = \left\{ x\in X \,\Big|\, \lim_{n\to\infty} \frac{S_n\ph_i(x)}{S_n\psi_i(x)} = \alpha_i \text{ for all } i\right\}.
\end{equation}
Furthermore, we have $K(A') \subset \KA$ whenever $A' \subset A$, and in particular $\Ka\subset \KA$ whenever $\alpha\in A$.

\begin{remark}\label{rmk:K'A}
We do not consider in this paper the related but distinct question of studying the level sets $K'(A) = \{ x \mid \AAA_\infty(x) = A\}$ when $A$ is not a singleton.  These sets support no invariant measures and hence require the orbit-gluing approach rather than the thermodynamic approach of this paper.
\end{remark}

The \emph{(fine) pressure spectrum for simultaneous level sets} is the function $\FFF\colon \PPP(\RR^d) \times C(X) \to \RR \cup \{-\infty\}$ given by
\begin{equation}\label{eqn:fine}
\FFF(A,\xi) = P_\KA(\xi),
\end{equation}
where $\PPP(\RR^d)$ denotes the collection of all subsets of $\RR^d$.  We will write $\FFF(\alpha,\xi)$ for the case when $A$ is a singleton.
  
We also consider the \emph{coarse pressure spectrum for simultaneous level sets}, defined as follows.  Given an open set $U\subset \RR^d$, consider the sets
\begin{equation}\label{eqn:GnU}
G_n(U) = \{ x\in X \mid \AAA_n(x) \in U \}.
\end{equation}
Thus for $n\in \NN$, $\delta>0$, and $\xi\in C(X)$, we have
\begin{equation}\label{eqn:LnU}
\Lambda_n(G_n(U),\xi,\delta) = \inf \left\{ \sum_{x\in E} e^{S_n \xi(x)} \Mid \bigcup_{x\in E} B(x,n,\delta) \supset G_n(U) \right\}.
\end{equation}
Then the lower and upper coarse spectra are 
\begin{equation}\label{eqn:coarse}
\begin{aligned}
\lF(A,\xi) &= \inf_{U\supset A} \lP_{(G_n(U))}(\xi) =
\inf_{U\supset A} \lim_{\delta\to 0} \llim_{n\to\infty} \frac 1n \log \Lambda_n(G_n(U),\xi,\delta), \\
\uF(A,\xi) &= \inf_{U\supset A} \uP_{(G_n(U))}(\xi) =
\inf_{U\supset A} \lim_{\delta\to 0} \ulim_{n\to\infty} \frac 1n \log \Lambda_n(G_n(U),\xi,\delta),
\end{aligned}
\end{equation}
where the infima are taken over all open sets $U$ containing $A$.  As with the fine spectrum, we will write $\lF(\alpha,\xi) = \lF(\{\alpha\},\xi)$, and similarly for $\uF$.

The utility of the coarse spectra $\lF$ and $\uF$, which are often neglected in the multifractal literature, is immediately demonstrated by the following result.

\begin{proposition}\label{prop:cptstable}
If $A\subset \RR^d$ is compact, then for every $\xi\in C(X)$ we have
\begin{equation}\label{eqn:cptstable}
\uF(A,\xi) = \sup_{\alpha\in A} \uF(\alpha,\xi).
\end{equation}
\end{proposition}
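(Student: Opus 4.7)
The plan is to prove the two inequalities separately. The direction $\sup_{\alpha\in A}\uF(\alpha,\xi) \leq \uF(A,\xi)$ is immediate from monotonicity: if $\alpha\in A$ and $U$ is open with $U\supset A$, then $U\supset\{\alpha\}$, so $U$ is admissible in the infimum defining $\uF(\alpha,\xi)$; hence $\uF(\alpha,\xi) \leq \uP_{(G_n(U))}(\xi)$. Taking the supremum over $\alpha\in A$ on the left and the infimum over open $U\supset A$ on the right gives the inequality.

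The substantive direction is $\uF(A,\xi) \leq \sup_{\alpha\in A}\uF(\alpha,\xi)$. I would fix $\eps>0$ and use compactness of $A$ to build a single good open neighbourhood of $A$ from many pointwise neighbourhoods. Specifically, for each $\alpha\in A$ choose, by the definition of $\uF(\alpha,\xi)$, an open set $U_\alpha\ni\alpha$ with
\[
\uP_{(G_n(U_\alpha))}(\xi) \leq \uF(\alpha,\xi) + \eps \leq \sup_{\alpha'\in A}\uF(\alpha',\xi) + \eps.
\]
By compactness, extract a finite subcover $U_{\alpha_1},\dots,U_{\alpha_k}$ of $A$ and put $U = \bigcup_{j=1}^k U_{\alpha_j}$. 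From~\eqref{eqn:GnU} we have $G_n(U) = \bigcup_{j=1}^k G_n(U_{\alpha_j})$.

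The key estimate is then a finite union bound for the upper capacity pressure along sequences of sets: if $Z_n = \bigcup_{j=1}^k Z_n^{(j)}$, then $\uP_{(Z_n)}(\xi) \leq \max_j \uP_{(Z_n^{(j)})}(\xi)$. This follows because the union of spanning sets for each $Z_n^{(j)}$ spans $Z_n$, so
\[
\Lambda_n(Z_n,\xi,\delta) \leq \sum_{j=1}^k \Lambda_n(Z_n^{(j)},\xi,\delta) \leq k\max_j \Lambda_n(Z_n^{(j)},\xi,\delta);
\]
taking $\frac1n\log$ kills the factor $k$ in the limit, and $\limsup$ commutes with finite maxima, so $\ulim_n \frac1n\log \Lambda_n(Z_n,\xi,\delta) \leq \max_j \ulim_n\frac1n\log\Lambda_n(Z_n^{(j)},\xi,\delta)$. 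Finally, since $\uP_{(Z_n)}(\xi,\delta)$ is monotone in $\delta$, taking $\delta\to 0$ is a supremum, which also commutes with finite maxima.

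Applying this to $Z_n^{(j)} = G_n(U_{\alpha_j})$ yields
\[
\uF(A,\xi) \leq \uP_{(G_n(U))}(\xi) \leq \max_j \uP_{(G_n(U_{\alpha_j}))}(\xi) \leq \sup_{\alpha\in A}\uF(\alpha,\xi) + \eps,
\]
and letting $\eps\to 0$ finishes the proof. The only delicate point is the commutation of the finite max with $\limsup$ and $\sup_\delta$, which is straightforward but should be explicitly noted; compactness of $A$ is essential precisely so the cover can be made finite.
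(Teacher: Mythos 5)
Your proof is correct. The two arguments share the same key ingredient — a finite-union bound for upper capacity pressure, namely that if $Z_n = \bigcup_{j=1}^k Z_n^{(j)}$ then $\uP_{(Z_n)}(\xi) \leq \max_j \uP_{(Z_n^{(j)})}(\xi)$, which the paper records as Lemma~\ref{lem:capstable} — but they deploy compactness differently. You choose, for each $\alpha \in A$, an open $U_\alpha$ realizing the infimum defining $\uF(\alpha,\xi)$ to within $\eps$, extract a finite subcover, and conclude directly after applying the union bound. The paper instead fixes a sequence $\gamma_k \to 0$, covers $A$ with balls of radius $\gamma_k$ centred at a finite $\gamma_k$-dense set $A_k$, uses the lemma to pick $\alpha^k \in A_k$ with $\uP_{(G_n(B(\alpha^k,\gamma_k)))}(\xi) \geq \uF(A,\xi)$, and passes to a convergent subsequence $\alpha^{k_j} \to \alpha$. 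The paper's diagonal argument is slightly heavier, but it yields the stronger conclusion that the supremum in~\eqref{eqn:cptstable} is attained at some $\alpha \in A$ (which is not part of the statement, but is a genuine bonus). Your route is more elementary and delivers the stated equality with less bookkeeping; if you want, you can recover attainment of the supremum a posteriori by upper semicontinuity of $\alpha \mapsto \uF(\alpha,\xi)$, but that requires a separate verification. One small point worth making explicit: when some $\uF(\alpha,\xi) = -\infty$, one should phrase the choice of $U_\alpha$ as ``$\uP_{(G_n(U_\alpha))}(\xi) \leq \max\{\sup_{\alpha'\in A}\uF(\alpha',\xi), -M\} + \eps$'' for an arbitrary fixed $M$, to avoid adding $\eps$ to $-\infty$; the argument then goes through and $M \to \infty$ handles the degenerate case where $\sup_{\alpha'\in A}\uF(\alpha',\xi) = -\infty$.
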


\begin{remark}
Proposition~\ref{prop:cptstable} is reminiscent of the standard result on countable stability of pressure given in~\cite[Theorem 11.2(3)]{yP97}, which states that $P_{\bigcup_n Z_n}(\xi) = \sup_n P_{Z_n}(\xi)$ for any countable union.  However, the present result applies to $\uF$, not to $\FFF$; furthermore, it is stronger than that statement in two ways:
\begin{enumerate}
\item there may be uncountably many values of $\alpha$, and
\item we typically have $\bigcup_\alpha K(\alpha) \subsetneqq K(A)$, due to the existence of points for which $\AAA_\infty(x)$ is not a singleton.
\end{enumerate}
\end{remark}

\subsection{Conditional variational principles and predicted spectra}

In light of the classical variational principle and~\eqref{eqn:halfVP}, it is reasonable to consider the \emph{conditional variational pressure}
\begin{equation}\label{eqn:Ta}
\TTT(\alpha,\xi) = P_{\Ka}^*(\xi) = \sup \left\{ h_\mu(f) + \int \xi \,d\mu \Mid \mu \in \Mf(\Ka) \right\}.
\end{equation}
We observe that using the ergodic decomposition, the value of the supremum is unchanged if we restrict to the collection $\Mfe(\Ka)$ of ergodic measures supported on $\Ka$.

We also write $\TTT(A,\xi) = P_\KA^*(\xi)$; by the above observation and Birkhoff's ergodic theorem, we see immediately that $\TTT(A,\xi) = \sup_{\alpha\in A} \TTT(\alpha,\xi)$.

Our main results relate both $\TTT$ and the pressure spectra to the pressure function on the whole space $X$.  Given $\beta\in \RR^d$ and $\Xi = (\xi_1,\dots,\xi_d) \in C(X)^d$, we write
\[
\beta*\Xi = (\beta_1 \xi_1, \dots, \beta_d \xi_d) \in C(X)^d, \qquad \langle \beta,\Xi\rangle = \sum_{i=1}^d \beta_i \xi_i \in C(X),
\]
and consider the \emph{predicted spectrum} $\SSS\colon \RR^d\times C(X) \to \RR$ given by
\begin{equation}\label{eqn:predicted}
\SSS(\alpha,\xi) = \inf \{ P( \langle q,\Phi - \alpha*\Psi \rangle + \xi) \mid q\in \RR^d \}.
\end{equation}
As with $\TTT$, we write $\SSS(A,\xi) = \sup_{\alpha\in A} \SSS(\alpha,\xi)$.

We may have $\SSS(\alpha,\xi) = -\infty$ for some $\alpha\in \RR^d$ and $\xi\in C(X)$.  Because $|P(\phi) - P(\phi')| \leq \|\phi - \phi'\|$ for all $\phi,\phi'\in C(X)$, the following are equivalent:
\begin{enumerate}
\item $\SSS(\alpha,0) = -\infty$;
\item $\SSS(\alpha,\xi) = -\infty$ for some $\xi\in C(X)$;
\item $\SSS(\alpha,\xi) = -\infty$ for every $\xi\in C(X)$.
\end{enumerate}
Furthermore, for $\xi=0$ we have the following dichotomy.

\begin{proposition}\label{prop:dichotomy}
For every $\alpha\in\RR^d$, either $\SSS(\alpha,0)=-\infty$ or $\SSS(\alpha,0)\geq 0$.
\end{proposition}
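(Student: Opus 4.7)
The plan is to prove the dichotomy by splitting on whether there is an invariant measure $\mu \in \Mf(X)$ satisfying $\int(\ph_i - \alpha_i \psi_i)\,d\mu = 0$ for every $i$, and using the variational principle together with Hahn--Banach separation.

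First, if such a $\mu$ exists, then for every $q\in\RR^d$ the variational principle gives
\[
P\bigl(\langle q,\Phi - \alpha*\Psi\rangle\bigr) \geq h_\mu(f) + \int \langle q,\Phi - \alpha*\Psi\rangle\,d\mu = h_\mu(f) \geq 0,
\]
and taking the infimum over $q$ yields $\SSS(\alpha,0)\geq 0$. This case is immediate.

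Second, suppose no such $\mu$ exists. Consider the continuous affine map $L\colon\Mf(X)\to\RR^d$ defined by $L(\mu) = \bigl(\int(\ph_i - \alpha_i\psi_i)\,d\mu\bigr)_{i=1}^d$. Since $\Mf(X)$ is compact and convex in the weak-$*$ topology, the image $K := L(\Mf(X))$ is a compact convex subset of $\RR^d$, and the standing hypothesis in this case is precisely $0\notin K$. By the strict hyperplane separation theorem (applied to the disjoint compact convex sets $\{0\}$ and $K$) there exist $v\in\RR^d$ and $c>0$ with $\langle v,w\rangle\geq c$ for all $w\in K$; equivalently $\int\langle v,\Phi-\alpha*\Psi\rangle\,d\mu\geq c$ for every $\mu\in\Mf(X)$. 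Taking $q = -tv$ for $t>0$ and applying the variational principle,
\[
P\bigl(\langle -tv,\Phi - \alpha*\Psi\rangle\bigr) = \sup_{\mu\in\Mf(X)} \Bigl( h_\mu(f) - t\!\int\langle v,\Phi-\alpha*\Psi\rangle\,d\mu\Bigr) \leq \htop(f) - tc,
\]
which tends to $-\infty$ as $t\to\infty$, so $\SSS(\alpha,0) = -\infty$.

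The only real content is the separation step, and the main thing to be careful about is that one needs the \emph{strict} separation (with a positive gap $c$), which is what lets the bound $\htop(f) - tc$ run off to $-\infty$. This in turn requires compactness of $K$, which is free from compactness of $\Mf(X)$ and continuity of $L$. No assumption on the signs of $\int\psi_i\,d\mu$ is needed for this dichotomy.
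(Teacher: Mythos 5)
Your proof is correct and follows essentially the same route as the paper: both split on whether $0$ lies in the convex set $J(\Phi,\Psi,\alpha)=\{\int(\Phi-\alpha*\Psi)\,d\mu : \mu\in\Mf(X)\}$, use the variational principle directly when $0\in J$, and invoke hyperplane separation to drive the pressure to $-\infty$ when $0\notin J$. The only cosmetic difference is that you justify strict separation via compactness of $L(\Mf(X))$ whereas the paper cites closedness and convexity of $J$; both suffice.
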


The following set is the natural domain for the predicted spectrum:
\begin{equation}\label{eqn:I'}
I'(\Phi,\Psi) = \{ \alpha\in \RR^d \mid \SSS(\alpha,0) \geq 0 \}.
\end{equation}
Observe that $I'(\Phi,\Psi)$ is exactly the set of $\alpha$ on which the three conditions preceding Proposition~\ref{prop:dichotomy} are satisfied.

We show in Proposition~\ref{prop:cvp} below that under mild conditions on $\Phi$ and $\Psi$ we have $I'(\Phi,\Psi) = I(\Phi,\Psi)$, where the latter is defined in Theorem~\ref{thm:BSS}.

\subsection{Results}

We begin with a series of inequalities reminiscent of~\eqref{eqn:halfVP} that hold quite generally and relate the quantities introduced so far.

\begin{thma}\label{thm:main}
For every compact $A\subset \RR^d$ and $\xi\in C(X)$, we have
\begin{equation}\label{eqn:Fleq}
\TTT(A,\xi) \leq \FFF(A,\xi) \leq \lF(A,\xi) \leq \uF(A,\xi) \leq \SSS(A,\xi).
\end{equation}
In particular, if $\alpha\in \RR^d$ is such that $\SSS(\alpha,\xi) = -\infty$ for some (and hence every) $\xi\in C(X)$, then $\Ka=\emptyset$.
\end{thma}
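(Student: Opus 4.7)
The plan is to prove the four inequalities in~\eqref{eqn:Fleq} one at a time and then derive the concluding claim as a corollary of the chain. The first inequality $\TTT(A,\xi)\le\FFF(A,\xi)$ is exactly the $P_Z^*\le P_Z$ part of~\eqref{eqn:halfVP} with $Z=\KA$, and the third inequality $\lF(A,\xi)\le\uF(A,\xi)$ is immediate from $\liminf\le\limsup$ applied to the two definitions in~\eqref{eqn:coarse}.

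For the second inequality $\FFF(A,\xi)\le\lF(A,\xi)$, compactness of $A$ plays the essential role. Fix an open $U\supset A$; then for any $x\in\KA$, the inclusion $\AAA_\infty(x)\subset A$ together with compactness of $A$ forces $d(\AAA_n(x),A)\to 0$, hence $\AAA_n(x)\in U$ for all sufficiently large $n$. Writing $K_M=\{x\in\KA:\AAA_n(x)\in U\text{ for every }n\ge M\}$, we obtain $\KA=\bigcup_M K_M$, and countable stability of the Bowen pressure reduces the task to bounding $P_{K_M}(\xi,\delta)$ for each $M$. Since $K_M\subset G_n(U)$ whenever $n\ge M$, a near-optimal $(n,\delta)$-cover of $G_n(U)$ is a legal cover of $K_M$ in the definition of Bowen pressure with all Bowen orders equal to $n$. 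Choosing $n\ge\max(M,N)$ along a subsequence realising the $\liminf$ in $\lP_{(G_n(U))}(\xi,\delta)$ forces $P_{K_M}(\xi,\delta)\le\lP_{(G_n(U))}(\xi,\delta)$; sending $\delta\to 0$ and taking the infimum over $U\supset A$ completes the step.

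The main obstacle is the fourth inequality $\uF(A,\xi)\le\SSS(A,\xi)$, which I handle by an exponential Chebyshev / large deviations argument. Given $\eps>0$, for each $\alpha\in A$ choose $q=q(\alpha)\in\RR^d$ with $P(\ip{q}{\Phi-\alpha*\Psi}+\xi)\le\SSS(\alpha,\xi)+\eps\le\SSS(A,\xi)+\eps$, and pick an open neighbourhood $U(\alpha)$ small enough that the identity $S_n\ph_i(x)-\alpha_i S_n\psi_i(x)=S_n\psi_i(x)\,(\AAA_n(x)_i-\alpha_i)$ yields $|S_n\ip{q}{\Phi-\alpha*\Psi}(x)|\le\eps n$ for every $x\in G_n(U(\alpha))$. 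Compactness of $A$ gives a finite subcover $U=U(\alpha_1)\cup\cdots\cup U(\alpha_k)\supset A$, hence $G_n(U)\subset\bigcup_j G_n(U(\alpha_j))$. On each piece, a standard cover/separated-set comparison transfers the Chebyshev estimate into a bound on the partition sum for the potential $\ip{q(\alpha_j)}{\Phi-\alpha_j*\Psi}+\xi$ on all of $X$, giving $\Lambda_n(G_n(U(\alpha_j)),\xi,\delta)\le e^{\eps n}\,e^{n(P_j+o(1))}\le e^{n(\SSS(A,\xi)+2\eps+o(1))}$. Summing over the $k$ pieces and taking $\frac 1n\log\limsup$, then $\delta\to 0$ and $\eps\to 0$, delivers the claim. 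The technical heart here is keeping the finitely many Chebyshev constants $\|q(\alpha_j)\|_1$ uniformly under control and executing the cover/separated comparison cleanly enough to absorb the modulus of continuity of $\xi$ into $o(1)$.

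For the concluding statement, assume $\SSS(\alpha,\xi)=-\infty$ and suppose for contradiction that $\Ka\ne\emptyset$. Any nonempty set $Z\subset X$ has $\Lambda_n(Z,\xi,\delta)\ge e^{-n\|\xi\|}$, so whenever $G_n(U)\ne\emptyset$ for infinitely many $n$ we have $\uP_{(G_n(U))}(\xi)\ge-\|\xi\|$. A single point $x\in\Ka$ satisfies $x\in G_n(U)$ for every large $n$ and every open $U\ni\alpha$, so $\uF(\alpha,\xi)\ge-\|\xi\|$, and by the chain already established $\SSS(\alpha,\xi)\ge-\|\xi\|>-\infty$, contradicting the hypothesis. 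Therefore $\Ka=\emptyset$.
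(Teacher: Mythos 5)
Your proof is correct and shares the paper's overall skeleton: the first and third inequalities are immediate, and for $\FFF\le\lF$ you and the paper both decompose $\KA$ via the sets $\bigcap_{n\ge N}G_n(U)$ and invoke countable stability of pressure plus the bound $P_{\bigcap_{n\ge N}Z_n}\le\lP_{(Z_n)}$. One inaccuracy: you credit compactness of $A$ with making the second inequality work, but the paper's remark after Theorem~\ref{thm:main} observes that compactness is needed \emph{only} for the last inequality; the subsequence argument showing $\AAA_n(x)\in U$ eventually uses only that $U$ is open with $U\supset A$, not that $A$ is compact. Your detour through $d(\AAA_n(x),A)\to 0$ is valid here since $A$ is given compact, but it obscures the greater generality. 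Where you genuinely diverge is the fourth inequality. The paper isolates the compactness step as Proposition~\ref{prop:cptstable}, proving by a pigeonhole argument (Lemma~\ref{lem:capstable}, finite-union stability of upper capacity pressure) that a \emph{single} $\alpha\in A$ realises $\uF(\alpha,\xi)\ge\uF(A,\xi)$, and then runs the Chebyshev estimate at that one $\alpha$. You instead choose, for each $\alpha\in A$, a near-optimal $q(\alpha)$ and a neighbourhood $U(\alpha)$ controlling $|S_n\ip{q(\alpha)}{\Phi-\alpha*\Psi}|$ on $G_n(U(\alpha))$, pass to a finite subcover of $A$, and sum the $k$ resulting bounds, the factor $k$ washing out under $\frac1n\log$. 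Both routes rest on the same Chebyshev/cover-comparison estimate; the paper's packaging yields Proposition~\ref{prop:cptstable} as a reusable statement, while yours is more self-contained but must treat the degenerate case $\SSS(A,\xi)=-\infty$ separately (you cannot then pick $q(\alpha)$ within $\eps$ of an infimum, though you can push $P$ below any prescribed threshold, which suffices). Your derivation of the final claim from the chain applied to $A=\{\alpha\}$ is exactly what the paper leaves implicit.
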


\begin{remark}
The requirement that $A$ be compact is only needed for the last inequality in~\eqref{eqn:Fleq}.  The other inequalities hold for all $A\subset \RR^d$.
\end{remark}

Theorem~\ref{thm:main} holds without any hypotheses on the dynamical system $(X,f)$ or the functions $\Phi,\Psi,\xi$ beyond compactness of $X$ and continuity of $f,\Phi,\Psi,\xi$.  In Theorem~\ref{thm:main2} below, we give conditions under which equality holds in~\eqref{eqn:Fleq}.  

Every measure in $\Mf(\Ka)$ satisfies the condition $\int( \Phi - \alpha*\Psi )\,d\mu = 0$.  For ergodic measures with $\int \psi_i\,d\mu\neq 0$ for all $i$, the converse is true as well:  if the integral vanishes then $\mu(\Ka)=1$.  Thus we consider potentials satisfying the following condition:
\begin{description}
\item[(Q)] $\int \psi_i\,d\mu \geq 0$ for every $\mu\in \Mf(X)$ and $1\leq i \leq d$, and the inequality is strict whenever $\int \ph_i\,d\mu = 0$.
\end{description}
If $\Phi$ and $\Psi$ satisfy \QQ, then any $\mu\in \Mf(X)$ with $\int( \Phi - \alpha*\Psi)\,d\mu = 0$ for some $\alpha\in \RR^d$ must have $\int \psi_i \,d\mu > 0$ for all $i$, whence one can show that $\mu(\Ka)=1$ whenever $\mu$ is ergodic.

\begin{remark}
Observe that \QQ\ is automatically satisfied if $\psi_i > 0$ for all $i$; this is the case considered in~\cite{BSS02}.
\end{remark}

We will see that taking a supremum over (not necessarily ergodic) measures satisfying the integral condition gives another way of computing $\SSS(\alpha,\xi)$, with the possible exception of $\alpha\in \di I(\Phi,\Psi)$.  Given $\alpha\in \RR^d$, let
\begin{equation}\label{eqn:Mfa}
\Mfa(X) = \left\{\mu\in \Mf(X) \,\Big|\, \int (\Phi - \alpha*\Psi)\,d\mu =0 \right\}.
\end{equation}
Let $\hat\TTT\colon \RR^d\times C(X) \to \RR \cup \{-\infty\}$ be given by
\begin{equation}\label{eqn:hatT}
\hat\TTT(\alpha,\xi) = \sup_{\mu\in \Mfa(X)} \left(h_\mu(f) + \int \xi\,d\mu \right).
\end{equation}
As with $\TTT$ and $\SSS$, we write $\hat\TTT(A,\xi) = \sup_{\alpha\in A} \hat\TTT(\alpha,\xi)$.

\begin{proposition}\label{prop:cvp}
For every $\alpha\in \RR^d$ and $\xi\in C(X)$, we have
\begin{equation}\label{eqn:TTS}
\TTT(\alpha,\xi) \leq \hat\TTT(\alpha,\xi) \leq\SSS(\alpha,\xi).
\end{equation}
If $\Phi$ and $\Psi$ satisfy \QQ, then
\begin{equation}\label{eqn:II}
I'(\Phi,\Psi) = I(\Phi,\Psi) := \left\{ \left( \frac{\int \ph_1\,d\mu}{\int \psi_1\,d\mu}, \dots, \frac{\int \ph_d\,d\mu}{\int \psi_d\,d\mu} \right) \Mid \mu\in \Mf(X)\right\},
\end{equation}
and for every $\xi\in C(X)$ and $\alpha\in \RR^d \setminus \di I(\Phi,\Psi)$, we have
\begin{equation}\label{eqn:TS}
\hat\TTT(\alpha,\xi) = \SSS(\alpha,\xi).
\end{equation}
Furthermore, for every $\alpha\in \inter I(\Phi,\Psi)$ there exists $R>0$ such that every $\|q\|\geq R$ has $P(\langle q,\Phi -\alpha*\Psi\rangle + \xi) > \SSS(\alpha,\xi)$.  In particular, this implies that the infimum in the definition of $\SSS(\alpha,\xi)$ is achieved for some $\|q\|\leq R$.
\end{proposition}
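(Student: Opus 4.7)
The plan is to prove the four assertions of the proposition in sequence, using only the variational principle for $P$, weak-$*$ compactness of $\Mf(X)$, the ergodic decomposition, and finite-dimensional Hahn--Banach separation. The inequalities in~\eqref{eqn:TTS} come essentially for free. For $\hat\TTT(\alpha,\xi) \leq \SSS(\alpha,\xi)$, any $\mu \in \Mfa(X)$ integrates $\langle q, \Phi-\alpha*\Psi\rangle$ to zero, so $h_\mu(f) + \int\xi\,d\mu \leq P(\xi + \langle q, \Phi-\alpha*\Psi\rangle)$ for every $q\in\RR^d$, and taking infima yields the claim. For $\TTT \leq \hat\TTT$, I use the ergodic decomposition to reduce to ergodic $\mu \in \Mf(\Ka)$; Birkhoff's theorem at a $\mu$-generic $x \in \Ka$, combined with the defining ratio convergence, forces $\int(\ph_i - \alpha_i\psi_i)\,d\mu = 0$ (splitting into the subcases $\int\psi_i\,d\mu > 0$, where the ratio identity is direct, and $\int\psi_i\,d\mu = 0$, where the ratio cannot converge in $\RR$ unless $\int\ph_i\,d\mu = 0$), so $\mu \in \Mfa(X)$.

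For the identification $I'(\Phi,\Psi) = I(\Phi,\Psi)$ under \QQ, the inclusion $I \subseteq I'$ follows at once from the previous paragraph ($\SSS(\alpha,0)\geq h_\mu(f) \geq 0$ for any witness $\mu$). For the reverse, consider $M := \{\int(\Phi - \alpha*\Psi)\,d\mu : \mu \in \Mf(X)\} \subseteq \RR^d$, the continuous image of the weak-$*$ compact $\Mf(X)$ under an affine map, hence convex and compact. Suppose $\alpha \in I'$ but $0 \notin M$: Hahn--Banach produces a direction $v$ with $\int \langle v, \Phi-\alpha*\Psi\rangle\,d\mu \leq -c < 0$ uniformly in $\mu$, so $P(\langle tv, \Phi-\alpha*\Psi\rangle) \leq \htop(f) - tc \to -\infty$ as $t \to +\infty$, contradicting $\alpha \in I'$. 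Thus $0 \in M$, and the witnessing $\mu$ must have all $\int\psi_i\,d\mu > 0$: otherwise \QQ\ would force $\int\ph_i\,d\mu \neq 0$, while $\mu \in \Mfa(X)$ with $\int\psi_i\,d\mu = 0$ yields $\int\ph_i\,d\mu = 0$. Hence $\alpha \in I$. The same contradiction shows $\Mfa(X) = \emptyset$ whenever $\alpha \notin I$, so in that case $\hat\TTT(\alpha,\xi) = -\infty = \SSS(\alpha,\xi)$, handling the case $\alpha\in\RR^d\setminus I$ of~\eqref{eqn:TS}.

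The main content of~\eqref{eqn:TS} is the case $\alpha \in \inter I$, which I treat by Hahn--Banach in $\RR^{d+1}$. Let $C := \{(\int(\Phi-\alpha*\Psi)\,d\mu, h) : \mu \in \Mf(X),\ h \leq h_\mu(f) + \int\xi\,d\mu\}$; this set is convex, closed (using upper semi-continuity of the entropy map), and downward-closed in $h$. For any $s > \hat\TTT(\alpha,\xi)$ we have $(0,s) \notin C$, so there is a nonzero separating covector $(q,r)\in\RR^{d+1}$ with $\langle q,e\rangle + rh \leq rs$ for every $(e,h)\in C$. Downward closure forces $r \geq 0$. To exclude $r = 0$ I verify $0 \in \inter M$: using $\alpha \in \inter I$, for each $i$ and small $\eps>0$ the points $\alpha \pm \eps e_i$ lie in $I$, and the corresponding witnessing measure $\mu'_{i,\pm}$ produces a point $\pm\eps\bigl(\int\psi_i\,d\mu'_{i,\pm}\bigr)e_i \in M$ of strictly positive magnitude by \QQ; together with $0\in M$, the convex hull of these $2d+1$ points is a cross-polytope contained in $M$ and containing a neighbourhood of $0$. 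If $r$ were $0$ we would have $\langle q, e\rangle \leq 0$ for every $e \in M$ with $q\neq 0$, contradicting $0 \in \inter M$. Thus $r > 0$; dividing the separation inequality by $r$ gives $h_\mu(f) + \int(\xi + \langle q/r, \Phi-\alpha*\Psi\rangle)\,d\mu \leq s$ for every $\mu$, so $P(\xi + \langle q/r, \Phi-\alpha*\Psi\rangle) \leq s$ and hence $\SSS(\alpha,\xi) \leq s$. Letting $s \downarrow \hat\TTT(\alpha,\xi)$ completes~\eqref{eqn:TS}.

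Coercivity uses the same bound $0 \in \inter M$: pick $\delta > 0$ with $\delta \bar{B} \subseteq M$. For any $q = tv$ with $\|v\|=1$ and $t\geq 1$, choose $\mu_v \in \Mf(X)$ with $\int(\Phi-\alpha*\Psi)\,d\mu_v = \delta v$; the variational principle gives $P(\langle q, \Phi-\alpha*\Psi\rangle + \xi) \geq h_{\mu_v}(f) + \int\xi\,d\mu_v + t\delta \geq t\delta - \|\xi\|$, which exceeds $\SSS(\alpha,\xi)$ once $t \geq R := (\SSS(\alpha,\xi) + \|\xi\| + 1)/\delta$; then the infimum defining $\SSS(\alpha,\xi)$ must be achieved on the compact ball $\|q\| \leq R$. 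The principal obstacle is the $\RR^{d+1}$ separation argument in the third paragraph, specifically excluding the degenerate case $r = 0$; this is where the hypothesis $\alpha \in \inter I$ (and not merely $\alpha \in I$) is genuinely needed, through the cross-polytope argument placing $0$ in $\inter M$.
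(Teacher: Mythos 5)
Your proof is correct, but for the key equality~\eqref{eqn:TS} it takes a genuinely different route from the paper.  The paper derives~\eqref{eqn:TS} from Proposition~\ref{prop:M'cvp}, whose engine is a degree-theoretic homotopy argument (Lemma~\ref{lem:intzero}): for each $\delta>0$ one builds a continuous family $\nu_q^\delta\in\MMM'$ almost realizing $P(\langle q,\Xi\rangle+\xi_0)$, and the boundary inequality forces the vector field $q\mapsto\int\Xi\,d\nu_q^\delta$ to vanish somewhere inside the ball $B(0,R)$.  You instead apply a finite-dimensional Hahn--Banach separation in $\RR^{d+1}$ to the convex hypograph $C$ of $(\int(\Phi-\alpha*\Psi)\,d\mu,\, h_\mu(f)+\int\xi\,d\mu)$, excluding the degenerate horizontal covector via your cross-polytope argument that $\alpha\in\inter I(\Phi,\Psi)$ forces $0\in\inter J(\Phi,\Psi,\alpha)$; that last step is a more direct substitute for the paper's Lemma~\ref{lem:IhatI}.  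For the case $\MMM'=\Mf(X)$ that this proposition needs, your route is cleaner and more elementary.  The reason the paper sets up the heavier Proposition~\ref{prop:M'cvp} machinery is reuse: it is applied again in the proof of Theorem~\ref{thm:main2} with $\MMM'$ a non-convex subset of $\Mf(X)$ (the ergodic equilibrium states of nearby potentials in $D$), where a convex-set separation argument has no purchase.  The remaining parts of your proof — the two inequalities in~\eqref{eqn:TTS}, the identification $I'(\Phi,\Psi)=I(\Phi,\Psi)$ via the separating-direction dichotomy for $0\in J(\Phi,\Psi,\alpha)$, the $\alpha\notin I$ case of~\eqref{eqn:TS}, and the coercivity/attainment statement — run essentially parallel to the paper's Lemmas~\ref{lem:notinI},~\ref{lem:inI},~\ref{lem:minq}.

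One small inaccuracy: you assert that $C$ is closed ``using upper semi-continuity of the entropy map,'' but that hypothesis is part of Theorem~\ref{thm:main2}, not of Proposition~\ref{prop:cvp}, which holds under the paper's standing assumptions alone.  Fortunately your argument does not actually require closedness of $C$: in $\RR^{d+1}$ a point outside a convex set can always be properly (if not strictly) separated from it, which is all the non-strict inequality $\langle q,e\rangle+rh\le rs$ uses.  You should simply drop the closedness claim.
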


\begin{remark}
In light of Proposition~\ref{prop:cvp}, it is tempting to try to fit $\hat\TTT$ into the series of inequalities in~\eqref{eqn:Fleq} by conjecturing that $\uF \leq \hat\TTT$.  We show in \S \ref{sec:notUSC} that for $\alpha\in \di I(\Phi,\Psi)$, this is not necessarily the case.
\end{remark}

All our results up to this point assumed only that $X$ is a compact metric space, $f\colon X\to X$ is a continuous map with finite topological entropy, and $\ph_i,\psi_i,\xi$ are all continuous.  Our main result gives further conditions under which all the quantities in~\eqref{eqn:Fleq} and~\eqref{eqn:TTS} are equal.

\begin{thma}\label{thm:main2}
Let $X$ be a compact metric space and $f\colon X\to X$ a continuous map such that the entropy map $\Mf(X) \to \RR$ is upper semi-continuous and $\htop(f) < \infty$.  Suppose that there is a dense subspace $D \subset C(X)$ such that every $\phi\in D$ has a unique equilibrium state.

Let $\Phi,\Psi\in C(X)^d$ satisfy \QQ.  Then equality holds in~\eqref{eqn:Fleq} and~\eqref{eqn:TTS} for every compact $A \subset \inter I(\Phi,\Psi)$.  That is, for such an $A$ the pressure function $P_\KA \colon C(X) \to \RR$ is given by
\begin{equation}\label{eqn:main2}
\begin{aligned}
P_\KA (\xi) &= \inf_{U\supset A} \lP_{(G_n(U))}(\xi) = \inf_{U\supset A} \uP_{(G_n(U))}(\xi) \\
&= \sup_{\alpha\in A}\inf_{q\in\RR^d} P( \langle q,\Phi - \alpha*\Psi \rangle + \xi) \\
&= \sup_{\alpha\in A}\sup \left\{ h_\mu(f) + \int \xi\,d\mu \,\Big|\, \mu\in \Mfa(X) \right\} \\
&= \sup_{\alpha\in A}\sup \left\{ h_\mu(f) + \int \xi\,d\mu \,\Big|\, \mu\in \Mf(\Ka) \right\}.
\end{aligned}
\end{equation}
In particular, we have $P_\KA(\xi) = \sup_{\alpha\in A} P_\Ka(\xi)$ for every compact $A\subset \inter I(\Phi,\Psi)$ and $\xi\in C(X)$.
\end{thma}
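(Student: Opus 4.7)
The plan is to close the chain of inequalities in~\eqref{eqn:Fleq} provided by Theorem~\ref{thm:main} by proving the reverse bound $\SSS(A,\xi) \leq \TTT(A,\xi)$. Together with Proposition~\ref{prop:cvp}, which identifies $\SSS(\alpha,\xi) = \hat\TTT(\alpha,\xi)$ for every $\alpha \in \inter I(\Phi,\Psi)$, this yields every equality in~\eqref{eqn:main2}; the final line $P_\KA(\xi) = \sup_{\alpha\in A} P_\Ka(\xi)$ then falls out from comparing the $\sup_{\alpha\in A}$-form of the equalities above with the singleton case $A = \{\alpha\}$. Since $\TTT(A,\xi) = \sup_{\alpha \in A}\TTT(\alpha,\xi)$ is available for free, the core task is local: for each $\alpha \in \inter I(\Phi,\Psi)$ and each $\eps > 0$, exhibit an ergodic $\mu \in \Mf(\Ka)$ with $h_\mu(f) + \int\xi\,d\mu \geq \SSS(\alpha,\xi) - \eps$.

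Following Hofbauer's approximation idea, I would fix such an $\alpha$ and set $\phi_q := \langle q,\Phi - \alpha*\Psi\rangle + \xi$. Proposition~\ref{prop:cvp} supplies a bounded minimizer $q^*$ of the convex function $q\mapsto P(\phi_q)$, together with the coercivity $P(\phi_q) > \SSS(\alpha,\xi)$ for $\|q\|$ large. Since $\phi_{q^*}$ need not lie in $D$, approximate its building blocks: pick $\tilde u_i^{(k)} \in D$ and $\tilde\xi^{(k)} \in D$ with $\|\tilde u_i^{(k)} - (\ph_i - \alpha_i\psi_i)\|_\infty, \|\tilde\xi^{(k)} - \xi\|_\infty < 1/k$, and set
\[
\tilde\phi_{q,k} := \sum_{i=1}^d q_i\,\tilde u_i^{(k)} + \tilde\xi^{(k)} \in D,
\]
which is affine in $q$ and admits a unique ergodic equilibrium state $\mu_{q,k}$. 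The convex function $\tilde P_k(q) := P(\tilde\phi_{q,k})$ is then differentiable with $\partial_{q_i} \tilde P_k(q) = \int \tilde u_i^{(k)}\,d\mu_{q,k}$. For $k$ large, $\tilde P_k - P(\phi_\cdot)$ is uniformly $O((1+\|q\|)/k)$ on bounded sets, so $\tilde P_k$ attains its minimum at some $q_k$ in a fixed ball; the gradient vanishes there, forcing $\int(\Phi - \alpha*\Psi)\,d\mu_{q_k,k} = O(1/k)$, and a short calculation gives $h_{\mu_{q_k,k}} + \int\xi\,d\mu_{q_k,k} = \tilde P_k(q_k) + O(1/k) \to \SSS(\alpha,\xi)$. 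Since $\mu_{q_k,k}$ is ergodic, it is supported on $K(\alpha^{(k)})$ with $\alpha^{(k)} := \alpha(\mu_{q_k,k}) \to \alpha$.

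These approximating measures already yield $\lF(\alpha,\xi) = \uF(\alpha,\xi) = \SSS(\alpha,\xi)$: for any open $V \ni \alpha$ and $k$ large, $\alpha^{(k)} \in V$, so $\mu_{q_k,k}(G_n(V)) \to 1$ as $n\to\infty$, and a standard variational lower bound for the capacity pressure of the sequence $(G_n(V))$ gives $\uP_{(G_n(V))}(\xi) \geq h_{\mu_{q_k,k}} + \int\xi\,d\mu_{q_k,k}$, whence $\uF(\alpha,\xi) \geq \SSS(\alpha,\xi)$. The main obstacle is the final inequality $\TTT(\alpha,\xi) \geq \SSS(\alpha,\xi)$, which requires an ergodic measure supported on $\Ka$ \emph{exactly}. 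To obtain one, I would pass from $q_k$ to a nearby $q_k^*$ at which the continuous $\RR^d$-valued map
\[
F_k(q) := \int (\Phi - \alpha*\Psi)\,d\mu_{q,k}
\]
vanishes. Since $F_k$ is an $O(1/k)$-uniform perturbation of $\nabla \tilde P_k$, and the gradient of a convex coercive function has non-trivial topological degree around any ball enclosing its zero set, a Brouwer-index / Minty-monotone-operator argument produces such a $q_k^*$ for $k$ large. The resulting ergodic $\mu_{q_k^*,k}$ has $\int(\Phi - \alpha*\Psi)\,d\mu_{q_k^*,k} = 0$ exactly; hypothesis~\QQ\ forces each $\int\psi_i\,d\mu_{q_k^*,k} > 0$, so by ergodicity it lies in $\Mf(\Ka)$, while the earlier computation still gives $h_{\mu_{q_k^*,k}} + \int\xi\,d\mu_{q_k^*,k} = \SSS(\alpha,\xi) + O(1/k)$. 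The degree-theoretic step enforcing the \emph{exact} integral condition is the delicate ingredient; the remainder is convex analysis of the pressure function, upper semi-continuity of entropy, and continuous dependence of the unique equilibrium state on the potential.
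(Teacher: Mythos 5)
Your proposal is correct and follows essentially the same route as the paper's proof: Hofbauer-style approximation of the potentials by elements of $D$, then a topological-degree argument applied to the map $q \mapsto \int(\Phi - \alpha*\Psi)\,d\mu_{q,k}$ to locate an ergodic equilibrium state with the integral condition satisfied exactly, then \textbf{(Q)} to conclude that this measure lies in $\Mf(\Ka)$. The paper packages the degree step as Proposition~\ref{prop:M'cvp} and Lemma~\ref{lem:intzero} (a homotopy to $q \mapsto q - \bar q$ on $\di B(0,R)$), which avoids explicit use of differentiability of the approximate pressure and the detour through its minimizer, but the underlying idea is identical.
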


We emphasise that we do \emph{not} require $\ph_i, \psi_i, \xi$ to lie in $D$; in particular, the pressure function may not be differentiable on the span of these functions.  This is a stronger result than was obtained in prior thermodynamic approaches to multifractal analysis, with the exception of Hofbauer's work on piecewise monotonic transformations~\cite{fH95,fH10b}, from which the key ideas in the proof of~\eqref{eqn:main2} for $\ph_i,\psi_i,\xi\notin D$ are derived.  (A similar criterion was used in~\cite{yK90,CRL11} to derive large deviations results.)

The key to this strengthening is the following fact.  Suppose $\phi_q \in C(X)$ is a continuously varying family of potentials.  Then using the variational principle $P(\phi_q) = \sup \{ h_\mu(f) + \int \phi_q\,d\mu \mid \mu\in \Mf(X) \}$ and the fact that $\Mf(X)$ is convex, one can obtain for every $\delta>0$ a continuous family $\nu_q^\delta$ of measures such that
\begin{equation}\label{eqn:almosteq}
h_{\nu_q^\delta}(f) + \int \phi_q \,d\nu_q^\delta > P(\phi_q) - \delta.
\end{equation}
However, the measures $\nu_q^\delta$ are in general not ergodic.  One can obtain ergodic $\nu_q^\delta$ satisfying~\eqref{eqn:almosteq} at the cost of losing continuity in $q$; the key consequence of the hypothesis on the subspace $D\subset C(X)$ in Theorem~\ref{thm:main2} is that it allows us to choose a family of measures $\nu_q^\delta$ satisfying both ergodicity and continuous dependence on $q$.

We reiterate that while the relationships in~\eqref{eqn:main2} are well-known in many cases, the following aspects of Theorem~\ref{thm:main2} are new, as discussed in the introduction.
\begin{enumerate}
\item The results apply to all continuous potentials, not just those whose span lies in $D$.
\item The denominators $\psi_i$ need not be uniformly positive, which allows us to treat non-uniformly expanding systems.
\item By obtaining a result for the topological pressure, we have enough information to recover both entropy and Hausdorff dimension for conformal maps.
\item Coarse spectra are also included.
\item The case where $A$ is not a singleton is covered.
\end{enumerate}

\begin{remark}
It is worth pointing out that when $d>1$ the domain $I(\Phi,\Psi)$ may not be the closure of its interior.  This phenomenon is discussed in detail in~\cite{BSS02,lB08}.
\end{remark}

\begin{remark}
A direct corollary of Theorem~\ref{thm:main2} is the following.  Suppose $(X,f)$ has upper semi-continuous entropy and finite topological entropy, and suppose that $X_n \subset X$ are compact $f$-invariant sets such that
\begin{enumerate}
\item $\lim_{n\to\infty} P_{X_n}(\phi) = P_X(\phi)$ for every $\phi\in C(X)$;
\item there exist dense subspaces $D_n \subset C(X_n)$ such that every $\phi\in D_n$ has a unique equilibrium state on $(X_n, f|_{X_n})$.
\end{enumerate}
Then if $\Phi,\Psi$ are as in Theorem~\ref{thm:main2}, the result of Theorem~\ref{thm:main2} still holds.
\end{remark}

\begin{remark}
As mentioned in Remark~\ref{rmk:K'A}, it is a very interesting question to study what happens when we replace the sets $K(A)$ from~\eqref{eqn:KA} with the sets $K'(A) = \{x \in X \mid \AAA_\infty(x) = A \}$.   It is known~\cite{lO03, PS07, GR09} that in the case $\xi=0$, we have $\htop(K'(A)) = \inf_{\alpha\in A} \SSS(\alpha,0)$ provided the system has some specification-like properties and $A\subset I(\Phi,\Psi)$ is connected and compact.

Because the level set $K'(A)$ does not support any invariant measures when $A$ has more than one element, the sets $K'(A)$ cannot be fully studied using the thermodynamic approach in this paper.  A straightforward modification of the arguments in \S\ref{sec:pfmain} yields the general upper bound
\begin{equation}\label{eqn:ainA}
P_{\{x \mid \alpha\in \AAA_\infty(x)\}}(\xi) \leq \inf_{q\in\RR^d} P(\langle q,\Phi - \alpha*\Psi\rangle + \xi)
\end{equation}
for every $\alpha\in\RR^d$, which in particular gives $P_{K'(A)}(\xi) \leq \inf_{\alpha\in A} \SSS(\alpha,\xi)$, but the lower bound needed for a proof of equality seems to require the orbit-gluing approach.\footnote{Note that we have equality in~\eqref{eqn:ainA} under the conditions of Theorem~\ref{thm:main2} since $K(\alpha) \subset \{x \mid \alpha\in \AAA_\infty(x)\}$.  However, $K'(A)$ does not contain any set $K(\alpha)$ when $A$ has more than one element, so our results give no lower bound for $P_{K'(A)}(\xi)$.}
\end{remark}

\subsection{Continuity properties of the spectrum}

For some of our applications, it will be important to understand how the predicted spectrum $\SSS(\alpha,\xi)$ depends on $\alpha$.

\begin{proposition}\label{prop:ctsonint}
Let $X$ be a compact metric space and $f\colon X\to X$ a continuous map with finite topological entropy.  Then for any $\Phi,\Psi\in C(X)^d$ satisfying \QQ\ and $\xi\in C(X)$, the function $\alpha\mapsto \SSS(\alpha,\xi)$ is upper semi-continuous on $\RR^d$ and continuous on $\inter I(\Phi,\Psi)$.
\end{proposition}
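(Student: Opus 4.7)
The plan is to prove upper semi-continuity on all of $\RR^d$ and lower semi-continuity on $\inter I(\Phi,\Psi)$ separately. Upper semi-continuity is immediate from the formula $\SSS(\alpha,\xi) = \inf_q P(\langle q, \Phi - \alpha*\Psi\rangle + \xi)$: the potential depends affinely (hence continuously) on $\alpha$ for each fixed $q$, and the pressure $P \colon C(X)\to\RR$ is $1$-Lipschitz, so each $\alpha\mapsto P(\langle q, \Phi - \alpha*\Psi\rangle + \xi)$ is continuous, making $\SSS(\alpha,\xi)$ an infimum of continuous functions and hence upper semi-continuous.

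For lower semi-continuity at a point $\alpha_0\in \inter I(\Phi,\Psi)$, I would take a sequence $\alpha_n\to\alpha_0$ and pick $q_n\in\RR^d$ with $P(\langle q_n, \Phi - \alpha_n*\Psi\rangle + \xi)\leq \SSS(\alpha_n,\xi) + 1/n$ (such $q_n$ exist by Proposition~\ref{prop:cvp}). Extracting a subsequence $q_{n_k}\to q_\infty$, joint continuity of $(q,\alpha)\mapsto P(\langle q, \Phi - \alpha*\Psi\rangle + \xi)$ yields
\[
\SSS(\alpha_0,\xi) \leq P(\langle q_\infty, \Phi - \alpha_0*\Psi\rangle + \xi) = \lim_k P(\langle q_{n_k}, \Phi - \alpha_{n_k}*\Psi\rangle + \xi) \leq \liminf_n \SSS(\alpha_n,\xi),
\]
which combined with upper semi-continuity gives continuity at $\alpha_0$. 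The only missing ingredient is a uniform bound $\|q_n\|\leq R$ enabling the extraction.

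The main obstacle, then, is upgrading the pointwise bound in Proposition~\ref{prop:cvp} to one uniform for $\alpha$ in a neighborhood of $\alpha_0$. For this I would argue by compactness using \QQ. Choose $r>0$ with $\bar B(\alpha_0,r)\subset I(\Phi,\Psi)$ and let
\[
K = \{(\alpha',\mu)\in \bar B(\alpha_0,r)\times\Mf(X) : \textstyle\int(\Phi - \alpha'*\Psi)\,d\mu = 0 \};
\]
$K$ is closed in the compact space $\bar B(\alpha_0,r)\times\Mf(X)$, hence compact, and is nonempty by Proposition~\ref{prop:cvp}. Condition \QQ forces $\int\psi_i\,d\mu>0$ on $K$ (for if $\int\psi_i\,d\mu=0$ the defining relation gives $\int\ph_i\,d\mu=0$, contradicting \QQ), so by compactness there is $c_0>0$ with $\int\psi_i\,d\mu\geq c_0$ throughout $K$. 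Then for any unit vector $u\in\RR^d$, a measure $\mu_u$ in the fibre of $K$ over $\alpha_0+ru$ satisfies
\[
\langle u, \textstyle\int(\Phi - \alpha_0*\Psi)\,d\mu_u\rangle = r\sum_i u_i^2 \int\psi_i\,d\mu_u \geq r c_0,
\]
and the analogous expression with $\alpha_0$ replaced by any $\alpha\in B(\alpha_0,\rho)$ is still at least $rc_0/2$ provided $\rho$ is small enough. The variational principle at $\mu_u$ and $q=tu$ then gives $P(\langle tu, \Phi - \alpha*\Psi\rangle + \xi) \geq trc_0/2 - \|\xi\|$, which exceeds the a priori bound $\SSS(\alpha,\xi) \leq P(\xi) \leq \htop(f) + \|\xi\|$ once $t$ exceeds some fixed $R$ independent of $\alpha\in B(\alpha_0,\rho)$. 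This furnishes the uniform bound on the optimizers and completes the argument.
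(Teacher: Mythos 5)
Your argument is correct, and it takes a genuinely different route from the paper for the lower semi-continuity part. Both proofs handle upper semi-continuity identically, as the infimum of the continuous family $\alpha\mapsto P(\langle q,\Phi-\alpha*\Psi\rangle+\xi)$. For lower semi-continuity on $\inter I(\Phi,\Psi)$, the paper starts from the \emph{pointwise} bound of Proposition~\ref{prop:cvp} (infimum attained for $\|q\|\le R$ and strictly larger pressure for $\|q\|\ge R$), uses convexity of $q\mapsto P(\langle q,\Phi-\alpha*\Psi\rangle+\xi)$ to deduce a linear lower bound $P(\cdots)\ge\gamma(\|q\|-R)+\SSS(\alpha,\xi)$, and then observes that perturbing $\alpha$ by $\eta<\gamma/\|\Psi\|$ changes the pressure by at most $\eta\|\Psi\|\,\|q\|$, which preserves a linear lower bound with smaller slope and hence coercivity; uniform continuity of pressure on the resulting compact ball of $q$'s finishes the job. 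You instead establish directly that the optimizer stays in a fixed ball $B(0,R)$ \emph{uniformly} for $\alpha$ ranging over a neighbourhood of $\alpha_0$, by a compactness argument in $\overline{B(\alpha_0,r)}\times\Mf(X)$: \QQ\ plus compactness yields $\int\psi_i\,d\mu\ge c_0>0$ on the constraint set, measures on the boundary sphere furnish, for each direction $u$, a test measure $\mu_u$ with $\langle u,\int(\Phi-\alpha*\Psi)\,d\mu_u\rangle\ge rc_0/2$, and the variational principle then gives linear growth of the pressure in $\|q\|$ at a rate independent of $\alpha$. This is in effect a uniformized version of Lemma~\ref{lem:minq}, and once you have it the subsequence-extraction argument replaces the paper's convexity step. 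Your route avoids invoking convexity of pressure and makes the uniform coercivity explicit at the cost of the extra compactness argument; the paper's route is shorter once Proposition~\ref{prop:cvp} is in hand but leans on convexity. One small bookkeeping point in your write-up: when you conclude $\|q_n\|\le R$ from $P(\langle q_n,\cdots\rangle+\xi)\le\SSS(\alpha_n,\xi)+1/n$, you should take $R$ slightly larger than the threshold at which the pressure first exceeds $\htop(f)+\|\xi\|$, so as to absorb the $1/n$ slack; this is harmless.
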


In some cases, we can say even more.  Given $\alpha\in \RR^d$ and $J \subset \{1,\dots,d\}$, let $A_J^\alpha = \{ \alpha'\in \RR^d \mid \alpha'_i = \alpha_i \text{ for all } i\notin J \}$ be the affine subspace of $\RR^d$ through $\alpha$ that allows $\alpha_j$ to vary for $j\in J$ and fixes the other $\alpha_i$.

\begin{proposition}\label{prop:isconcave}
Let $X,f,\Phi,\Psi,\xi$ be as in Proposition~\ref{prop:ctsonint}, and suppose that $J \subset \{1,\dots,d\}$ is such that $\psi_j \equiv 1$ for every $j\in J$.  Then for every $\alpha\in \RR^d$, the map $\alpha' \mapsto \SSS(\alpha',\xi)$ is concave on $A_J^\alpha$.  Together with upper semi-continuity, this implies that $\tilde A_J^\alpha := A_J^\alpha \cap I(\Phi,\Psi)$ is compact and convex, and $\SSS(\cdot,\xi)$ is continuous on $\tilde A_J^\alpha$.
\end{proposition}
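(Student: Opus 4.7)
The core of the argument is establishing concavity of $\SSS(\cdot,\xi)$ along $A_J^\alpha$. For fixed $q \in \RR^d$, I would expand
\[
\langle q, \Phi - \alpha'*\Psi\rangle = \sum_{i=1}^d q_i \ph_i - \sum_{i \notin J} q_i \alpha_i \psi_i - \sum_{j \in J} q_j \alpha'_j,
\]
using $\alpha'_i = \alpha_i$ for $i \notin J$ and $\psi_j \equiv 1$ for $j \in J$; the last sum is a real constant. Since $P(\phi - c) = P(\phi) - c$ for $c \in \RR$, the map $\alpha' \mapsto P(\langle q, \Phi - \alpha'*\Psi\rangle + \xi)$ is affine on $A_J^\alpha$, and taking the infimum over $q$ shows that $\SSS(\cdot,\xi)$ is concave on $A_J^\alpha$ as a pointwise infimum of affine functions.

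For compactness and convexity of $\tilde A_J^\alpha$, I would invoke Proposition~\ref{prop:cvp} to identify $\tilde A_J^\alpha = \{\alpha' \in A_J^\alpha : \SSS(\alpha',0) \geq 0\}$. This is the zero-superlevel set of the concave, upper semi-continuous function $\SSS(\cdot,0)$ (upper semi-continuity coming from Proposition~\ref{prop:ctsonint}), hence closed and convex. Boundedness uses $\psi_j \equiv 1$ once more: for $j \in J$ and $\alpha' \in I(\Phi,\Psi)$ we have $|\alpha'_j| = |\int \ph_j\,d\mu| \leq \|\ph_j\|_\infty$, while the other coordinates are pinned to $\alpha_i$. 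Closed plus bounded gives compact.

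For continuity of $\SSS(\cdot,\xi)$ on $\tilde A_J^\alpha$, observe that $\SSS(\cdot,\xi)$ is finite there (differing from the non-negative $\SSS(\cdot,0)$ by at most $\|\xi\|_\infty$), concave on $A_J^\alpha$, and upper semi-continuous. On the relative interior of $\tilde A_J^\alpha$ this yields continuity by the standard fact that a concave function is continuous on the relative interior of its effective domain. At a relative boundary point $\alpha_\infty$, I would pick $\alpha^* \in \mathrm{ri}(\tilde A_J^\alpha)$ and first use concavity along the segment $[\alpha^*, \alpha_\infty]$ together with upper semi-continuity at $\alpha_\infty$ to deduce continuity along that segment (the one-dimensional argument: concavity forces $\lim_{t\to 1^-} \SSS((1-t)\alpha^* + t\alpha_\infty, \xi) \geq \SSS(\alpha_\infty,\xi)$, which matches the opposite inequality from upper semi-continuity). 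Then for a general sequence $\alpha_n \to \alpha_\infty$ in $\tilde A_J^\alpha$, I would study $(1-t)\alpha^* + t\alpha_n \to (1-t)\alpha^* + t\alpha_\infty$, which lies in the relative interior for $t<1$, and combine continuity on the relative interior with concavity to transfer the limit back to $\alpha_n$. This boundary-point analysis is the main technical hurdle; as an alternative one may appeal to the general convex-analytic fact that a finite concave upper semi-continuous function on a compact convex subset of $\RR^n$ is automatically continuous.
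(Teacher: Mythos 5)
Your derivation of concavity is correct and is essentially the same as the paper's: you take the pointwise infimum over $q\in\RR^d$ of affine functions of $\alpha'$ directly, whereas the paper splits the infimization into two stages (first over the coordinates $\hat q$ not indexed by $J$, then over $\tilde q$) and identifies the result as a Legendre transform, but the two formulations are equivalent. Your treatment of compactness and convexity of $\tilde A_J^\alpha$ is also sound.

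The gap lies in the continuity claim at relative boundary points. Concavity gives $\SSS((1-t)\alpha^*+t\alpha_n,\xi)\geq(1-t)\SSS(\alpha^*,\xi)+t\SSS(\alpha_n,\xi)$, which is an \emph{upper} bound on $\SSS(\alpha_n,\xi)$ in terms of values at relative interior points; your ``transfer the limit back'' step would need a \emph{lower} bound on $\SSS(\alpha_n,\xi)$ to prove lower semicontinuity, and concavity does not produce one. More seriously, the fallback fact you cite is false: a finite, concave, upper semicontinuous function on a compact convex subset of $\RR^n$ with $n\geq 2$ need not be lower semicontinuous. Take $C=\{(x,y):-1\leq x\leq 1,\ y^2\leq 1-x\}$ and $f(x,y)=-y^2/(1-x)$ for $x<1$, with $f(1,0)=0$. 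This $f$ is concave (the Hessian of $-f$ is positive semidefinite on $\{x<1\}$, and $f$ is affine along every segment terminating at $(1,0)$), bounded, and upper semicontinuous, yet $f\equiv -1$ on the parabolic arc $y^2=1-x$, so $f$ is not lower semicontinuous at $(1,0)$. The correct general statement (Gale--Klee--Rockafellar) requires the domain to be locally simplicial (e.g.\ a polytope), which $\tilde A_J^\alpha$ need not be. To be fair, the paper's proof leans on the same unqualified assertion (``concave functions are lower semicontinuous where finite''), so the flaw is inherited rather than introduced. When $|J|=1$ the one-dimensional segment argument you sketch does suffice, since $\tilde A_J^\alpha$ is then an interval; for $|J|\geq 2$ a different justification is needed.
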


\begin{remark}
It seems plausible to conjecture that $\alpha\mapsto \SSS(\alpha,\xi)$ is in fact continuous on all of $I(\Phi,\Psi)$.  However, this problem remains open.  If it turns out to be true, then Corollary~\ref{cor:intenough} below would apply to all compact $A\subset \RR^d$ such that $A = \overline{A \cap \inter I(\Phi,\Psi)}$.
\end{remark}

When the hypotheses of Theorem~\ref{thm:main2} are satisfied, Proposition~\ref{prop:isconcave} has the following corollary, which will be of particular importance when we study $u$-dimension in \S \ref{sec:dimu} and when we study finer level sets in \S \ref{sec:higherdim}.

\begin{corollary}\label{cor:intenough}
Let $X,f,\Phi,\Psi$ satisfy the hypotheses of Theorem~\ref{thm:main2}, and suppose $J\subset \{1,\dots,d\}$ is such that $\psi_j \equiv 1$ for every $j\in J$.  Suppose $A \subset A_J^\alpha$ for some $\alpha\in \RR^d$, and write $\hat A = A\cap \inter I(\Phi,\Psi)$.  Suppose further that $A$ is the closure of $\hat A$.  Then the result of Theorem~\ref{thm:main2} applies to $A$, and we also have
\begin{equation}\label{eqn:bdrynegligible}
P_{K(A)}
(\xi)
= \sup_{\alpha'\in A} P_{K(\alpha')}(\xi)
= \sup_{\alpha'\in \hat A} P_{K(\alpha')}(\xi)
= P_{K(\hat A)}(\xi).
\end{equation}
\end{corollary}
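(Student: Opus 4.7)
The plan is to sandwich $P_{K(A)}(\xi)$ using Theorem~\ref{thm:main} as an upper bound and Theorem~\ref{thm:main2} applied to singletons $\{\alpha'\}\subset \inter I(\Phi,\Psi)$ as a lower bound, closing the gap between $A$ and $\hat A$ via the continuity of $\SSS(\cdot,\xi)$ afforded by Proposition~\ref{prop:isconcave}. First I would check that $A$ is compact: Proposition~\ref{prop:isconcave} tells us that $\tilde A_J^\alpha = A_J^\alpha \cap I(\Phi,\Psi)$ is compact, and since $\hat A = A\cap \inter I(\Phi,\Psi)\subset \tilde A_J^\alpha$, passing to closures gives $A = \overline{\hat A}\subset \tilde A_J^\alpha$. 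The same proposition guarantees continuity of $\SSS(\cdot,\xi)$ on $\tilde A_J^\alpha$; combined with $A = \overline{\hat A}$, this yields the crucial identity $\SSS(A,\xi) = \sup_{\alpha'\in A}\SSS(\alpha',\xi) = \sup_{\alpha'\in\hat A}\SSS(\alpha',\xi)$.

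Next, for each $\alpha'\in\hat A$ the singleton $\{\alpha'\}$ is a compact subset of $\inter I(\Phi,\Psi)$, so Theorem~\ref{thm:main2} applies and gives $P_{K(\alpha')}(\xi) = \SSS(\alpha',\xi)$. Monotonicity of $P_Z(\cdot)$ in $Z$ (any Bowen-ball cover of $Z'\supset Z$ is also a cover of $Z$) together with the inclusions $K(\alpha') \subset K(\hat A) \subset K(A)$ and the upper bound from Theorem~\ref{thm:main} then produce the chain
\[
\sup_{\alpha'\in\hat A} \SSS(\alpha',\xi) = \sup_{\alpha'\in\hat A} P_{K(\alpha')}(\xi) \leq P_{K(\hat A)}(\xi) \leq P_{K(A)}(\xi) \leq \SSS(A,\xi).
\]
Combined with $\SSS(A,\xi) = \sup_{\hat A}\SSS$, every inequality above is forced to be an equality, which gives~\eqref{eqn:bdrynegligible}; the remaining identity $\sup_{\alpha'\in A}P_{K(\alpha')}(\xi) = P_{K(A)}(\xi)$ follows since its left side is trivially bounded above by its right and below by $\sup_{\hat A} P_{K(\cdot)}(\xi) = P_{K(A)}(\xi)$.

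To deduce the full conclusion of Theorem~\ref{thm:main2} for $A$, I would substitute the equality $P_{K(A)}(\xi) = \SSS(A,\xi)$ back into the general sandwich~\eqref{eqn:Fleq} from Theorem~\ref{thm:main}: this collapses $\TTT(A,\xi)$, $\FFF(A,\xi)$, $\lF(A,\xi)$, $\uF(A,\xi)$, and $\SSS(A,\xi)$ to a single value, and Proposition~\ref{prop:cvp} pins down $\hat\TTT(A,\xi)$ the same way. The one substantive obstacle is the boundary behaviour of $\SSS$: if $\SSS(\cdot,\xi)$ were merely upper semi-continuous on $I(\Phi,\Psi)$ rather than continuous on $\tilde A_J^\alpha$, a point $\alpha' \in A\setminus\hat A$ could contribute a value to $\SSS(A,\xi)$ that no $P_{K(\beta)}(\xi)$ with $\beta\in\hat A$ could match, and the sandwich would collapse. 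The hypothesis $\psi_j \equiv 1$ for $j\in J$ enters precisely through the concavity step of Proposition~\ref{prop:isconcave} that supplies this continuity.
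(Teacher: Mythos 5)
Your approach is essentially parallel to the paper's (the paper exhausts $\hat A$ by compact subsets $\hat A_n\subset\inter I(\Phi,\Psi)$ rather than by singletons, but the continuity-of-$\SSS$ mechanism is the same), and the chain you build is correct as far as it goes. However, the final paragraph contains a genuine logical misstep. You claim that substituting $P_{K(A)}(\xi)=\SSS(A,\xi)$, i.e.\ $\FFF(A,\xi)=\SSS(A,\xi)$, into the sandwich~\eqref{eqn:Fleq} ``collapses $\TTT(A,\xi)$.'' It does not: the sandwich is $\TTT\leq\FFF\leq\lF\leq\uF\leq\SSS$, so knowing $\FFF=\SSS$ collapses the middle three quantities but leaves open the possibility that $\TTT(A,\xi)<\FFF(A,\xi)$. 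The paper recognizes this and therefore aims from the start to prove $\TTT(A,\xi)=\SSS(A,\xi)$, from which the rest follows automatically.

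The fix is short and uses ingredients you already have in hand. Theorem~\ref{thm:main2} applied to each singleton $\{\alpha'\}$, $\alpha'\in\hat A$, gives not only $P_{K(\alpha')}(\xi)=\SSS(\alpha',\xi)$ but also $\TTT(\alpha',\xi)=\SSS(\alpha',\xi)$. Using the identity $\TTT(A,\xi)=\sup_{\alpha'\in A}\TTT(\alpha',\xi)$ (stated after~\eqref{eqn:Ta}) together with your continuity identity $\SSS(A,\xi)=\sup_{\alpha'\in\hat A}\SSS(\alpha',\xi)$, one gets
\begin{equation*}
\TTT(A,\xi) \geq \sup_{\alpha'\in\hat A}\TTT(\alpha',\xi) = \sup_{\alpha'\in\hat A}\SSS(\alpha',\xi) = \SSS(A,\xi),
\end{equation*}
and the reverse inequality is the first and last links of~\eqref{eqn:Fleq}. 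So $\TTT(A,\xi)=\SSS(A,\xi)$, which then collapses the whole of~\eqref{eqn:Fleq} and, via Proposition~\ref{prop:cvp}, also pins down $\hat\TTT(A,\xi)$. With that repair your proof is complete; in effect, you should run your chain through $\TTT(\alpha')$ rather than through $P_{K(\alpha')}$, which is what the paper does.
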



\section{Examples and applications}\label{sec:app}

\subsection{$u$-dimension of level sets}\label{sec:dimu}
The concept of $u$-dimension was introduced by Barreira and Schmeling in~\cite{BS00}, and includes as special cases the topological entropy and the Hausdorff dimension for conformal maps.  We recall the definition, following the generalisation in~\cite{vC11b} to functions $u\in C(X)$ that are not necessarily positive.

\begin{definition}
Let $X$ be a compact metric space and $f\colon X\to X$ a continuous map.  Let $\PPP(Z,N,\delta)$ be as in \S \ref{sec:pressure}, and fix a continuous function $u\colon X\to \RR$ satisfying the following condition.
\begin{description}
\item[(P)]  For every $\delta>0$ there exist covers $E_N \in \PPP(X,N,\delta)$ such that $\lim_{N\to\infty} \inf_{(x,n)\in E_N} S_nu(x) = +\infty$.
\end{description}
In~\cite{BS00} it is assumed that $u>0$, which implies \PP.  However, there are examples where $u$ is not strictly positive but \PP\ is still satisfied and the notion of $u$-dimension is still well-defined and useful~\cite[\S 4.2]{vC11b}.

Proposition 2.1 in~\cite{vC11b} shows that any function $u\in C(X)$ satisfying \PP\ has $\int u\,d\mu \geq 0$ for every $\mu\in \Mf(X)$.

Given a set $Z\subset X$, consider for each $s\in \RR$ and $\delta>0$ the set function
\begin{equation}\label{eqn:mua}
m_u(Z,s,\delta)=\lim_{N\to\infty} \inf_{\PPP(Z,N,\delta)} \sum_{(x_i,n_i)} e^{-s S_{n_i} u(x)}.
\end{equation}
This function is non-increasing in $s$, and takes values $\infty$ and $0$ at all but at most one value of $s$.  Denoting the critical value of $s$ by
\[
\dim_u(Z,\delta) = \inf \{s\in\RR \mid m_u(Z,s,\delta)=0\},
\]
we get $m_u(Z,s,\delta)=\infty$ when $s<\dim_u(Z,\delta)$, and $0$ when $s>\dim_u(Z,\delta)$.

The \emph{$u$-dimension} of $Z$ is $\dim_u Z = \lim_{\delta\to 0} \dim_u(Z,\delta)$; the limit exists for the same reason as in the definition of topological pressure.  In the particular case $u=1$, this definition yields the topological entropy $\htop(Z)$.
\end{definition}

The $u$-dimension is related to the topological pressure by Bowen's equation.

\begin{proposition}\label{prop:bowen}\cite[Proposition 2.2]{vC11b}
Let $X$ be compact, $f\colon X\to X$ be continuous, and $u\in C(X)$ satisfy \PP.  Suppose $Z\subset X$ has the property that $\llim_{n\to\infty} \frac 1n S_nu(x)>0$ for all $x\in Z$.  Then
\begin{equation}\label{eqn:bowen}
\dim_u Z = \inf \{t\in \RR \mid P_Z(-tu) \leq 0\}.
\end{equation}
\end{proposition}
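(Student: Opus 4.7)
The plan is to prove $\dim_u Z = t^*$, where $t^* := \inf\{t \in \RR \mid P_Z(-tu) \leq 0\}$, by a direct comparison of the Carath\'eodory-type set functions $m_u(Z,s,\delta)$ and $m_P(Z,s,-tu,\delta)$ from their definitions. These functions differ only in whether each cover $\{(x_i, n_i)\} \in \PPP(Z,N,\delta)$ is weighted by $\exp(-s S_{n_i} u(x_i))$ or by $\exp(-n_i s - t S_{n_i} u(x_i))$, so the positivity hypothesis $\llim_{n\to\infty} n^{-1} S_n u(x) > 0$ on $Z$ is precisely what is needed to pass between the two ``clocks'' $n$ and $S_n u(x)$.

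For the easy inequality $t^* \leq \dim_u Z$: when $s \geq 0$ the trivial bound $e^{-n_i s} \leq 1$ gives $m_P(Z,s,-tu,\delta) \leq m_u(Z,t,\delta)$ termwise, so for any $t > \dim_u Z$ the right-hand side vanishes for sufficiently small $\delta$, forcing $P_Z(-tu,\delta) \leq 0$ and hence $P_Z(-tu) \leq 0$ and $t \geq t^*$. This step uses no lower bound on $S_n u$.

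For the harder inequality $t^* \geq \dim_u Z$: the positivity of $\llim n^{-1} S_n u$ is only a pointwise statement, so I stratify. For rational $\gamma > 0$ and $N_0 \in \NN$, set
\[
Z_{\gamma, N_0} := \{ x \in Z \mid S_n u(x) \geq \gamma n \text{ for all } n \geq N_0 \},
\]
so that $Z = \bigcup_{\gamma, N_0} Z_{\gamma, N_0}$. Since both $P_Z(\cdot)$ and $\dim_u$ are countably stable under countable unions (Theorem~11.2(3) of~\cite{yP97} for pressure; the parallel argument works for $u$-dimension), it suffices to show that $P_Z(-tu) \leq 0$ forces $\dim_u Z_{\gamma, N_0} \leq t$ for each fixed $\gamma, N_0$.

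Writing $Z' = Z_{\gamma, N_0}$ and fixing $\eps, \delta > 0$, I consider any cover $\{(x_i, n_i)\} \in \PPP(Z', N, \delta)$ with $N \geq N_0$; then $x_i \in Z'$ and $n_i \geq N_0$ give $S_{n_i} u(x_i) \geq \gamma n_i$, so
\[
e^{-(t+\eps) S_{n_i} u(x_i)} = e^{-\eps S_{n_i} u(x_i)} e^{-t S_{n_i} u(x_i)} \leq e^{-(\eps \gamma) n_i} e^{-t S_{n_i} u(x_i)}.
\]
Summing and taking infima and the limit in $N$ yields $m_u(Z', t+\eps, \delta) \leq m_P(Z', \eps \gamma, -tu, \delta)$, and the right-hand side vanishes because $\eps \gamma > 0$ and $P_{Z'}(-tu) \leq P_Z(-tu) \leq 0$ by monotonicity. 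Hence $\dim_u(Z', \delta) \leq t + \eps$; sending $\delta \to 0$ and $\eps \to 0$ completes the proof. The main obstacle is exactly this stratification: the exponential exchange $e^{-(t+\eps) S_n u} \leq e^{-\eps \gamma n} e^{-t S_n u}$ requires the uniform lower bound $S_n u(x) \geq \gamma n$, which holds on each $Z_{\gamma, N_0}$ but cannot be arranged on $Z$ itself, so countable stability of both set functions is essential to patch the pieces together.
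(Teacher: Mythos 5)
Your argument is correct. Both directions are handled soundly: the easy inequality $t^* \leq \dim_u Z$ needs only $e^{-n_i s} \leq 1$ for $s\geq 0$; for the converse, the stratification $Z_{\gamma,N_0} = \{x\in Z \mid S_n u(x)\geq \gamma n \text{ for all } n\geq N_0\}$ is exactly what converts the pointwise hypothesis $\varliminf_n \tfrac 1n S_n u(x)>0$ into the uniform bound needed to exchange the weights $e^{-\eps S_{n_i}u(x_i)}\leq e^{-\eps\gamma n_i}$, and countable stability of both Carath\'eodory characteristics patches the pieces.

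Note that the paper does not reprove this proposition --- it is imported verbatim as \cite[Proposition~2.2]{vC11b}. However, your method is the same one the paper itself uses in the closely analogous Lemma~\ref{lem:bound-pressure}: stratify by the eventual rate of growth of $S_n\phi$, compare the Carath\'eodory sums termwise on each stratum, and then invoke countable stability \cite[Theorem~11.2(3)]{yP97}. So your proof is both correct and aligned with the paper's (and the cited source's) technique; there is no genuinely different route here. Two small stylistic remarks: it is worth saying explicitly that if the set $\{t \mid P_Z(-tu)\leq 0\}$ is empty then $t^*=+\infty$ and the hard inequality is vacuous; and when invoking countable stability of $\dim_u$ it suffices to observe that $m_u(\cdot,s,\delta)$ is countably subadditive (by merging covers), since this is what makes the $\sup$ formula work, exactly as for $m_P$.
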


Consider $\Phi,\Psi\in C(X)^d$ satisfying \QQ\ and $u\in C(X)$ satisfying \PP.  As in~\cite[\S 2.2]{vC11b}, we write
\begin{equation}\label{eqn:hatX}
\hat X = \left\{ x\in X \Mid \llim_{n\to\infty} \frac 1n S_n u(x) > 0 \right\}
\end{equation}
and consider the sets $\hat K(\alpha) = \Ka\cap \hat X$.  Proposition~\ref{prop:bowen} allows us to relate $u$-dimension and topological pressure on $\hat X$, and in particular on $\hat K(\alpha)$.

In all our applications, $K(\alpha)$ and $\hat K(\alpha)$ have the same $u$-dimension.  Furthermore, Corollary~\ref{cor:intenough} can be used to show that restricting from $X$ to $\hat X$ does not change topological pressure, and similarly for $K(\alpha)$ and $\hat K(\alpha)$; this plays an important role in the proof of the following result.


\begin{theorem}\label{thm:dimu}
Let $X$ be a compact metric space and $f\colon X\to X$ a continuous map with upper semi-continuous entropy function and finite topological entropy.  Suppose that there is a dense subspace $D\subset C(X)$ such that every $\phi\in D$ has a unique equilibrium state.

Let $\Phi,\Psi\in C(X)^d$ satisfy \QQ\ and $u\in C(X)$ satisfy \PP. Then for every $\alpha \in \inter \left\{ \left(\frac{\int \ph_1\,d\mu}{\int\psi_1\,d\mu},\dots,\frac{\int \ph_d\,d\mu}{\int\psi_d\,d\mu}\right) \mid \mu\in\Mf(X) \right\}$, we have
\begin{enumerate}
\item the level set $\hat K(\alpha)$ satisfies the conditional variational principle
\begin{equation}\label{eqn:dimucvp}
\dim_u \hat K(\alpha) = \sup \left\{ \frac{h_\mu(f)}{\int u\,d\mu} \Mid \mu\in \Mfa(X) \text{ and } \int u\,d\mu > 0 \right\};
\end{equation}
\item $\dim_u \hat K(\alpha) = \inf \{T_u(q) \mid q\in \RR^d \}$, where $T_u(q)$ is defined by
\begin{equation}\label{eqn:Tuq}
T_u(q) = \inf \{t\in \RR \mid P(\langle q,\Phi - \alpha*\Psi\rangle - tu) \leq 0 \};
\end{equation}
\item there exist ergodic measures $\mu$ supported on $\hat K(\alpha)$ such that $\dim_u \mu = \frac{h_\mu(f)}{\int u\,d\mu}$ is arbitrarily close to $\dim_u \hat K(\alpha)$.
\end{enumerate}
\end{theorem}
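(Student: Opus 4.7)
My plan is to combine Bowen's equation (Proposition~\ref{prop:bowen}) with Theorem~\ref{thm:main2}. Since $\liminf_{n\to\infty} \tfrac{1}{n}S_n u(x) > 0$ for every $x\in\hat K(\alpha)\subset\hat X$, Bowen's equation immediately gives
\[
\dim_u \hat K(\alpha) = \inf\{t\in\RR \mid P_{\hat K(\alpha)}(-tu)\leq 0\},
\]
reducing the theorem to computing the pressure $t\mapsto P_{\hat K(\alpha)}(-tu)$. The main obstacle is that Theorem~\ref{thm:main2} evaluates $P_{K(\alpha)}$ rather than $P_{\hat K(\alpha)}$, so my first task is to establish the equality $P_{\hat K(\alpha)}(\xi) = P_{K(\alpha)}(\xi)$ for every $\xi\in C(X)$, as hinted at in the paragraph preceding the theorem.

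For this bridge I augment the multifractal data: set $\tilde\Phi = (\ph_1,\dots,\ph_d,u)$ and $\tilde\Psi = (\psi_1,\dots,\psi_d,1)$. These still satisfy \QQ\ because the extra denominator $\tilde\psi_{d+1}\equiv 1$ is strictly positive, and they place us in the setting of Corollary~\ref{cor:intenough} with $J = \{d+1\}$, for which $A_J^{(\alpha,0)} = \{\alpha\}\times\RR$. Writing $\beta_{\min} = \inf_{\mu\in\Mfa(X)}\int u\,d\mu \geq 0$ and $\beta_{\max} = \sup_{\mu\in\Mfa(X)}\int u\,d\mu$, I take $A = \{\alpha\}\times[\beta_{\min},\beta_{\max}]$. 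An ergodic-decomposition argument using \QQ\ shows that every weak-$*$ accumulation point of the empirical measures $\tfrac{1}{n}\sum_{k<n}\delta_{f^k x}$ for $x\in K(\alpha)$ lies in $\Mfa(X)$, which yields the set identity $\tilde K(A) = K(\alpha)$. A separate continuity argument, based on upper semi-continuity of entropy and the hypothesis $\alpha\in\inter I(\Phi,\Psi)$, shows that $\{\alpha\}\times(\beta_{\min},\beta_{\max})\subset \inter I(\tilde\Phi,\tilde\Psi)$, verifying the closure hypothesis of Corollary~\ref{cor:intenough}. The Corollary then gives $P_{K(\alpha)}(\xi) = \sup_{\beta\in(\beta_{\min},\beta_{\max})} P_{\tilde K((\alpha,\beta))}(\xi)$; since every $\beta$ in that supremum satisfies $\beta>0$ and hence $\tilde K((\alpha,\beta))\subset\hat K(\alpha)$, the right side is bounded by $P_{\hat K(\alpha)}(\xi)$, while the reverse inequality is trivial from $\hat K(\alpha)\subset K(\alpha)$.

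With $P_{\hat K(\alpha)}(\xi) = P_{K(\alpha)}(\xi)$ in hand, Theorem~\ref{thm:main2} applied with $\xi = -tu$ gives
\[
P_{\hat K(\alpha)}(-tu) = \inf_{q\in\RR^d} P\bigl(\langle q,\Phi-\alpha*\Psi\rangle - tu\bigr) = \sup_{\mu\in\Mfa(X)}\Bigl(h_\mu(f) - t\textstyle\int u\,d\mu\Bigr).
\]
For statement (2), I take $\inf\{t:\cdot\leq 0\}$ of the first expression and swap the infima over $q$ and $t$; this is legitimate because for each $q$ the function $t\mapsto P(\langle q,\Phi-\alpha*\Psi\rangle-tu)$ is continuous and non-increasing (the latter since $\int u\,d\mu\geq 0$ by \PP), yielding $\dim_u \hat K(\alpha) = \inf_{q\in\RR^d} T_u(q)$. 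For statement (1), I take $\inf\{t:\cdot\leq 0\}$ of the second expression and rearrange $h_\mu - t\int u\,d\mu\leq 0$ into $t\geq h_\mu/\int u\,d\mu$ on the set $\{\mu:\int u\,d\mu>0\}$; measures $\mu\in\Mfa(X)$ with $\int u\,d\mu = 0$ and $h_\mu>0$, if they exist, force both sides of (1) to equal $+\infty$---the right side via the convex combinations $(1-\tfrac1n)\mu + \tfrac1n\nu$ with $\nu\in\Mfa(X)$ satisfying $\int u\,d\nu>0$, together with affinity of entropy. For statement (3), the construction inside Theorem~\ref{thm:main2}'s proof already produces ergodic $\mu\in\Mfa(X)$ realizing $P_{K(\alpha)}(-tu)$ to within any $\delta>0$; taking $t$ slightly less than $\dim_u\hat K(\alpha)$ makes $h_\mu - t\int u\,d\mu$ positive, which forces $\int u\,d\mu>0$ for such ergodic approximants (an ergodic $\mu$ with $\int u\,d\mu=0$ contributes only $h_\mu$, and the case analysis above shows this does not attain the supremum in the finite-dimension case), after which Birkhoff gives $\mu(\hat K(\alpha))=1$ and $h_\mu/\int u\,d\mu > t$.

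The key technical obstacle is verifying the hypotheses of Corollary~\ref{cor:intenough} for the augmented system: showing $\{\alpha\}\times(\beta_{\min},\beta_{\max})\subset \inter I(\tilde\Phi,\tilde\Psi)$ as a subset of $\RR^{d+1}$ requires semi-continuity of the envelopes $\alpha'\mapsto\beta_{\min}^{\alpha'},\beta_{\max}^{\alpha'}$ on a neighbourhood of $\alpha$, and the identification $\tilde K(A) = K(\alpha)$ requires a careful analysis of the empirical measures of orbits in $K(\alpha)$ using the ratio constraint encoded by $\Mfa(X)$. A secondary concern is the degenerate case where $\Mfa(X)$ contains a measure with vanishing $\int u\,d\mu$ but positive entropy, which forces both sides of (1) to be $+\infty$ and requires the convex-combination trick to keep the conditional variational principle consistent.
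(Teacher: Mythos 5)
Your proposal follows essentially the same strategy as the paper: augment the system via $\tilde\Phi = (\ph_1,\dots,\ph_d,u)$ and $\tilde\Psi = (\psi_1,\dots,\psi_d,1)$, invoke Corollary~\ref{cor:intenough} with $J = \{d+1\}$ to obtain the bridge $P_{\hat K(\alpha)}(\xi) = P_{K(\alpha)}(\xi)$, and then feed $\xi = -tu$ into Theorem~\ref{thm:main2} and solve Bowen's equation. Your choice $A = \{\alpha\}\times[\beta_{\min},\beta_{\max}]$ rather than the paper's $\{\alpha\}\times[0,U]$ makes no real difference, and the observation you flag as a ``key technical obstacle'' --- that every weak-$*$ limit of empirical measures from $x\in K(\alpha)$ lies in $\Mfa(X)$, so that $\tilde K(A) = K(\alpha)$ --- is exactly the fact the paper leans on (implicitly) to justify its set identity. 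The one substantive divergence is in claim (2): you swap the infima over $t$ and $q$ directly, which is valid precisely because Proposition~\ref{prop:cvp} guarantees the infimum $\inf_q P(\langle q,\Phi-\alpha*\Psi\rangle - tu)$ is attained on a fixed ball $B(0,R)$; the paper instead passes through the auxiliary sets $Y(\gamma) = \{x \mid \llim \tfrac{1}{n} S_n u(x) \geq \gamma\}$ and Lemma~\ref{lem:bound-pressure} to manufacture the explicit lower bound $\gamma\delta > 0$. Your route is shorter and equally correct for the purpose of concluding $P_{\hat K(\alpha)}(-tu) > 0$ for $t < \inf_q T_u(q)$; the paper's $Y(\gamma)$ machinery buys a quantitative positivity estimate that is not strictly needed once the achieved-infimum fact from Proposition~\ref{prop:cvp} is in hand. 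Your treatment of the degenerate case (a measure in $\Mfa(X)$ with $\int u\,d\mu = 0$ and positive entropy forces both sides of (1) to be $+\infty$, via convex combinations and affinity of entropy) is correct and is a detail the paper glosses over.
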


This generalises~\cite[Theorem 8]{BSS02}, with the caveat that the supremum in~\eqref{eqn:dimucvp} becomes a maximum under the stronger hypotheses of that theorem, and similarly for the statement on ergodic measures in the third result.

\subsection{Applications to one-dimensional spectra}\label{sec:one-dim}

Although an important part of Theorem~\ref{thm:main2} is its applicability to higher-dimensional spectra with $d>1$ and level sets $K(A)$ where $A$ is not a singleton, we still obtain a number of new results by considering the case $d=1$ and $A=\{\alpha\}$.

\subsubsection{Entropy spectrum for Birkhoff averages}
Let $X$ be a transitive subshift of finite type and $\ph\in C(X)$ an arbitrary continuous potential.  Let $K(\alpha) = \{x\in X \mid \frac 1n S_n\ph(x) \to \alpha\}$ be the level sets for $\ph$.  It was shown in~\cite{TV03} that
\begin{equation}\label{eqn:TV}
\begin{aligned}
\htop(\Ka) &= \sup \left\{ h(\mu) \Mid \mu\in \Mf(X), \int \ph\,d\mu = \alpha \right\} \\
&= \inf_{q\in \RR} (P(q\ph) - q\alpha).
\end{aligned}
\end{equation}

Some power of $f$ is a mixing SFT and hence satisfies specification, whence every H\"older continuous potential on $X$ has a unique equilibrium state~\cite{rB75b}.  Thus the following is a direct consequence of Theorem~\ref{thm:main2}.

\begin{theorem}\label{thm:entBirk}
In addition to~\eqref{eqn:TV}, we have 
\[
\htop(\Ka) = \sup \{ h(\mu) \mid \mu\in \Mfe(\Ka) \}.
\]
\end{theorem}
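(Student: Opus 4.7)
The plan is to derive Theorem~\ref{thm:entBirk} as a direct specialisation of Theorem~\ref{thm:main2}, followed by a short ergodic-decomposition step to convert the supremum over $\Mf(\Ka)$ into a supremum over ergodic measures. I would set $d = 1$, $\Phi = (\ph)$, $\Psi \equiv (1)$, $\xi = 0$, and $A = \{\alpha\}$ in Theorem~\ref{thm:main2}.

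First I would verify the hypotheses. A transitive subshift of finite type is expansive, so the entropy map $\mu \mapsto h_\mu(f)$ is upper semi-continuous on $\Mf(X)$ and the topological entropy is finite. The condition \QQ\ holds automatically because $\psi \equiv 1 > 0$. As noted in the paragraph preceding the theorem, a power of $f$ is a mixing SFT, hence satisfies specification, so by~\cite{rB75b} every H\"older continuous potential has a unique equilibrium state; taking $D$ to be the subspace of H\"older continuous functions on $X$, which is dense in $C(X)$, satisfies the remaining hypothesis of Theorem~\ref{thm:main2}. For $\alpha \in \inter I(\Phi,\Psi) = \inter\{\int\ph\,d\mu \mid \mu\in\Mf(X)\}$, Theorem~\ref{thm:main2} then yields
\[
\htop(\Ka) = \sup\{h_\mu(f) \mid \mu \in \Mf(\Ka)\},
\]
which together with~\eqref{eqn:TV} accounts for two of the three equalities in the theorem.

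To pass to ergodic measures I would invoke the ergodic decomposition. The level set $\Ka$ is a Borel $f$-invariant subset of $X$, so any $\mu \in \Mf(\Ka)$ admits an ergodic decomposition $\mu = \int \nu\,d\tau(\nu)$ in which $\tau$-a.e.\ component $\nu$ satisfies $\nu(\Ka) = 1$, i.e.\ $\nu \in \Mfe(\Ka)$ (equivalently, by Birkhoff's ergodic theorem, $\int\ph\,d\nu = \alpha$). Since entropy is affine under the ergodic decomposition, $h_\mu(f) = \int h_\nu(f)\,d\tau(\nu) \leq \sup_{\nu\in \Mfe(\Ka)} h_\nu(f)$, giving the desired equality; the reverse inequality is trivial.

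The main obstacle, such as it is, is purely bookkeeping: verifying the hypotheses of Theorem~\ref{thm:main2} in the SFT setting, all of which are classical. A minor subtlety is the boundary case $\alpha \in \di I(\Phi,\Psi)$, where Theorem~\ref{thm:main2} does not apply directly, but there (2.20) still holds by~\cite{TV03} and the ergodic-decomposition argument above carries over verbatim since it only requires $\Mf(\Ka)$ to be non-empty.
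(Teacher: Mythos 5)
Your derivation for $\alpha\in\inter I(\Phi,\Psi)$ is correct and is essentially the paper's proof: specialise Theorem~\ref{thm:main2} to $d=1$, $\Phi=(\ph)$, $\Psi\equiv(1)$, $\xi=0$, $A=\{\alpha\}$, with $D$ the H\"older functions, then pass from $\Mf(\Ka)$ to $\Mfe(\Ka)$ via the ergodic decomposition (which the paper itself records immediately after~\eqref{eqn:Ta}); in fact the proof of Theorem~\ref{thm:main2} already produces ergodic measures directly, so this last step is even lighter than you suggest.

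Your boundary remark, however, does not close the gap it raises. The Takens--Verbitskiy identity~\eqref{eqn:TV} gives $\htop(\Ka)=\hat\TTT(\alpha,0)$, a supremum over $\Mfa(X)$, whereas the equality in Theorem~\ref{thm:entBirk} requires $\htop(\Ka)=\TTT(\alpha,0)$, a supremum over $\Mf(\Ka)$. The ergodic decomposition shows $\TTT(\alpha,0)=\sup_{\Mfe(\Ka)}h$, but it does not identify $\TTT$ with $\hat\TTT$; these can genuinely differ (see Proposition~\ref{prop:cvp} and the example in \S\ref{sec:notUSC}). For a one-dimensional SFT the boundary case is in fact fine, but for a different reason: at an extreme value $\alpha$ of $\int\ph\,d\mu$, every $\mu\in\Mfa(X)$ has all its ergodic components also integrating $\ph$ to $\alpha$ (otherwise one would exceed the extremum), so $\hat\TTT(\alpha,0)=\TTT(\alpha,0)$. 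Since the paper presents the theorem as a direct consequence of Theorem~\ref{thm:main2}, its intended scope is $\alpha\in\inter I(\Phi,\Psi)$, and for that range your argument is complete.
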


This generalises Theorem 4.2 in~\cite{vC11b}, which applied to a smaller class of potentials $\ph$.

\subsubsection{Dimension spectra on conformal repellers}\label{sec:dimspec}

Let $M$ be a smooth manifold and $f\colon M\to M$ a $C^1$ map.  Suppose $X\subset M$ is a \emph{transitive conformal repeller} for $f$; that is, a compact $f$-invariant set such that 
\begin{enumerate}
\item $f|_X$ is topologically transitive;
\item $Df(x)$ is a scalar multiple of an isometry with $\|Df(x)\| \geq 1$;
\item there exists a neighbourhood $U\supset X$ such that $X = \bigcap_{n\geq 1} f^{-n}(U)$.
\end{enumerate}

If there exists $\rho>1$ such that $\|Df(x)\| \geq \rho$ for all $x\in X$, then $(X,f)$ is \emph{uniformly expanding}, otherwise it is \emph{non-uniformly expanding}. In this section we deal only with uniformly expanding repellers.

Given a conformal repeller $X$ for $f$, there exist Markov partitions of arbitrarily small diameter~\cite{dR82,GP97}, and using the transitivity assumption, the repeller is semi-conjugate to an irreducible SFT, so every H\"older continuous potential function on $X$ has a unique equilibrium state.  In particular, $(X,f)$ satisfies the hypotheses of Theorem \ref{thm:dimu}.

Let $\ph(x) = \log \|Df(x)\|$; then the level sets for $\ph$ are
\[
\Ka = \{ x\in X \mid \lambda(x) = \alpha \},
\]
where $\lambda(x)$ is the Lyapunov exponent at $x$.  Write $\lambda(\mu) = \int \log \|Df(x)\|\,d\mu$ for $\mu\in \Mf(X)$, and let $u=\ph$.  Then $\dim_u = \dim_H$ by~\cite{BS00} and the following result is a consequence of Theorem~\ref{thm:dimu}.

\begin{theorem}\label{thm:Lyapspec}
Let $X$ be a transitive conformal repeller for a $C^1$ map $f$ and $\Ka$ the level sets for Lyapunov exponents.  Then for every $\alpha\in \inter  \{ \lambda(\mu) \mid \mu\in \Mf(X) \}$, we have
\begin{equation}\label{eqn:Lyapspec}
\begin{aligned}
\dim_H \Ka &= \sup \left \{ \frac{ h_\mu(f)}{\lambda(\mu)} \Mid \mu\in \Mf(X), \int \log \|Df\|\,d\mu = \alpha \right\} \\
&= \sup \{\dim_H \mu \mid \mu\in \Mfe(\Ka) \} \\
&= \frac 1\alpha \inf_{q\in \RR} (P(q\log \|Df\|) - q\alpha).
\end{aligned}
\end{equation}
\end{theorem}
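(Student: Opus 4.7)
The plan is to reduce Theorem~\ref{thm:Lyapspec} to a direct application of Theorem~\ref{thm:dimu} with $d = 1$, $\ph_1 = \log\|Df\|$, $\psi_1 \equiv 1$, and $u = \log\|Df\|$. First I would verify the hypotheses. A uniformly expanding conformal repeller is expansive, so the entropy map $\mu\mapsto h_\mu(f)$ is upper semi-continuous and $\htop(f) < \infty$. Because $X$ admits Markov partitions of arbitrarily small diameter and is semi-conjugate to an irreducible subshift of finite type, every H\"older continuous potential has a unique equilibrium state, and since H\"older functions are dense in $C(X)$ the required dense subspace $D\subset C(X)$ exists. With $\psi_1 \equiv 1$, condition \QQ\ is immediate. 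The uniform expansion hypothesis $\|Df\| \geq \rho > 1$ gives $u = \log\|Df\| \geq \log\rho > 0$ on $X$, which trivially implies \PP\ and forces $\hat X = X$, so that $\hat K(\alpha) = K(\alpha)$ and the constraint $\int u\,d\mu > 0$ in~\eqref{eqn:dimucvp} is automatic. By~\cite{BS00}, the $u$-dimension with this choice of $u$ coincides with Hausdorff dimension on a conformal repeller, so $\dim_u = \dim_H$ throughout.

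Next I would read off the conclusions. The set $I(\Phi,\Psi) = \{\lambda(\mu) : \mu\in \Mf(X)\}$ matches the domain in the statement, and $\Mfa(X) = \{\mu \in \Mf(X) : \lambda(\mu) = \alpha\}$. Theorem~\ref{thm:dimu}(1) then yields the first equality of~\eqref{eqn:Lyapspec} directly. For the ergodic-measure version, Theorem~\ref{thm:dimu}(3) produces ergodic $\mu \in \Mfe(K(\alpha))$ with $\dim_u\mu = h_\mu(f)/\lambda(\mu)$ arbitrarily close to $\dim_H K(\alpha)$; combined with the general bound $\dim_H\mu \leq \dim_H K(\alpha)$ and the identity $\dim_H\mu = \dim_u\mu$ on a conformal repeller, this proves the second equality.

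The third equality requires a brief Legendre-transform computation starting from Theorem~\ref{thm:dimu}(2), which gives $\dim_H K(\alpha) = \inf_{q\in \RR} T_u(q)$ with
\[
T_u(q) = \inf\{t\in \RR : P((q-t)\log\|Df\|) - q\alpha \leq 0\}.
\]
Changing variable to $s = q - t$ and using convexity of $s \mapsto P(s\log\|Df\|)$ together with $\alpha > 0$, one sees that for fixed $q$ the optimal $t$ satisfies $P(s\log\|Df\|) = q\alpha$ at the larger root, giving $T_u(q) = q - s = (P(s\log\|Df\|) - s\alpha)/\alpha$. As $q$ ranges over $\RR$, the corresponding $s$ sweeps through $\RR$, so $\inf_{q\in \RR} T_u(q) = \frac{1}{\alpha}\inf_{s\in \RR}(P(s\log\|Df\|) - s\alpha)$, which is the third equality.

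The main obstacle is bookkeeping rather than substance: one must take some care in the Legendre-transform step to select the correct branch of the implicit function defining $T_u(q)$ and to confirm that the range over which $s$ varies is all of $\RR$. The positivity of $\alpha$ and the strict bound $\log\|Df\| \geq \log\rho > 0$ make this routine. The deeper work -- the conditional variational principle, the existence of ergodic equilibrium approximants, and the reduction from $C^{1+\eps}$ to the $C^1$ setting -- has already been absorbed into Theorem~\ref{thm:dimu}, so the remaining proof really is a matter of assembling pieces.
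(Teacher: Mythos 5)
Your proposal is correct and follows essentially the same route as the paper: verify the hypotheses of Theorem~\ref{thm:dimu} with $\ph = \psi^{-1}u = \log\|Df\|$ and $\psi \equiv 1$, read off the first two equalities, and convert the third by the substitution $q\alpha = P(s\log\|Df\|)$, which is precisely the paper's reparametrisation $q = \bar q + \bar T(\bar q)$ with $\bar q = s$. The only minor blemish is the phrase ``at the larger root'': since $u \geq \log\rho > 0$, the function $s\mapsto P(s\log\|Df\|)$ is strictly increasing, so the root is unique and no branch selection is needed.
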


This result was already known in the case when $f$ is $C^{1+\eps}$~\cite{hW99,BS01}; the $C^1$ case is new and requires the more general techniques in this paper.

We can also treat the case where $\ph\in C(X)$ is an arbitrary potential function and we consider the level sets $\Ka$ for Birkhoff averages of $\ph$.  The following result generalises results in~\cite{BS01} and Theorem 4.3 in~\cite{vC11b} to the case where $f$ is only $C^1$ and $\ph$ is only continuous.

\begin{theorem}\label{thm:Birkspec}
Let $X$ be a transitive conformal repeller for a $C^1$ map $f$ and $\Ka$ the level sets for Birkhoff averages of a continuous function $\ph$.  Then for every $\alpha\in \inter \{ \int \ph\,d\mu \mid \mu\in\Mf(X) \}$, we have
\begin{align*}
\dim_H \Ka &= \sup \left \{ \frac{ h_\mu(f)}{\lambda(\mu)} \Mid \mu\in \Mf(X), \int \ph\,d\mu = \alpha \right\} \\
&= \sup \{\dim_H \mu \mid \mu\in \Mfe(\Ka) \} \\
&= \inf_{q\in \RR} T_\alpha(q),
\end{align*}
where $T_\alpha(q)$ is defined by $P(q(\ph - \alpha) - T_\alpha(q)\log \|Df\|) = 0$.
\end{theorem}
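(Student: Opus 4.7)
The plan is to apply Theorem~\ref{thm:dimu} with $d=1$, $\ph_1 = \ph$, $\psi_1 \equiv 1$, and $u = \log \|Df\|$. Uniform expansion gives some $\rho>1$ with $\|Df(x)\| \geq \rho$ on $X$, hence $u \geq \log \rho > 0$; this makes \PP{} trivially true, forces $\hat X = X$ and $\hat K(\alpha) = \Ka$, and ensures $\int u\,d\mu > 0$ for every $\mu\in\Mf(X)$.  Condition \QQ{} is immediate from $\psi_1\equiv 1$.

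To invoke Theorem~\ref{thm:dimu} I would verify the general hypotheses on $(X,f)$ exactly as in the setup preceding Theorem~\ref{thm:Lyapspec}: the uniformly expanding map $f|_X$ is expansive, which yields upper semi-continuity of the entropy map; the existence of arbitrarily fine Markov partitions~\cite{dR82,GP97} provides a semi-conjugacy onto an irreducible SFT, giving $\htop(f)<\infty$ together with a unique equilibrium state for every H\"older continuous potential; and the H\"older continuous functions are dense in $C(X)$, supplying the required dense subspace $D$.

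With the hypotheses in place, the three conclusions of Theorem~\ref{thm:dimu} translate directly into the statement.  Since $\psi_1\equiv 1$, one has $\Mfa(X) = \{\mu\in\Mf(X) : \int\ph\,d\mu = \alpha\}$, and the interior condition on $\alpha$ matches.  On a conformal repeller with $u = \log\|Df\|$, the result $\dim_u = \dim_H$ from~\cite{BS00} (used already in Theorem~\ref{thm:Lyapspec}) converts conclusion~(1) of Theorem~\ref{thm:dimu} into the first equality of Theorem~\ref{thm:Birkspec}.  For the second equality, I would combine conclusion~(3) with the observation that an ergodic $\mu$ lies in $\Mfa(X)$ if and only if $\mu\in\Mfe(\Ka)$ (one direction by Birkhoff, the other by definition), and again use $\dim_u\mu = \dim_H\mu$.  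For the third equality I would check that the function $T_u(q)$ from~\eqref{eqn:Tuq} coincides with the $T_\alpha(q)$ of Theorem~\ref{thm:Birkspec}: because $u\geq\log\rho>0$, the map $t\mapsto P(q(\ph-\alpha) - tu)$ is continuous and strictly decreasing with $P\to -\infty$ as $t\to\infty$, so the infimum defining $T_u(q)$ is attained at the unique zero of this pressure function, which is precisely how $T_\alpha(q)$ is defined.

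There is essentially no new obstacle; the substantive work lives in Theorem~\ref{thm:dimu} (which in turn rests on Theorem~\ref{thm:main2}).  The advance over~\cite{BS01} comes from the fact that Theorem~\ref{thm:main2} does \emph{not} require $\ph$ or $\log\|Df\|$ to lie in the dense subspace $D$, which is exactly what allows us to take $f$ merely $C^1$ and $\ph$ merely continuous.
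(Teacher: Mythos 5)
Your proposal is correct and matches the paper's approach: the paper dispatches Theorem~\ref{thm:Birkspec} in one line as a corollary of Theorem~\ref{thm:dimu}, and you have accurately spelled out the choices ($d=1$, $\psi_1\equiv 1$, $u=\log\|Df\|$), the verification of \QQ{} and \PP{}, the reduction $\hat K(\alpha)=\Ka$ via uniform expansion, the translation $\dim_u=\dim_H$ from~\cite{BS00}, and the identification $T_u(q)=T_\alpha(q)$.
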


We conclude this section by stating a result on the dimension spectrum for a weak Gibbs measure.

\begin{definition}
Let $X$ be a compact metric space and $f\colon X\to X$ a continuous map.  Given $\phi\in C(X)$, a \emph{weak Gibbs measure} for $\phi$ is a Borel probability measure $\nu$ on $X$ (not necessarily $f$-invariant) such that
\begin{equation}\label{eqn:wkGibbs}
\begin{aligned}
P(\phi) &= \lim_{\delta\to 0} \llim_{n\to\infty} \frac 1n S_n\phi(x) - \frac 1n \log \nu(B(x,n,\delta)) \\
&= \lim_{\delta\to 0} \ulim_{n\to\infty} \frac 1n S_n\phi(x) - \frac 1n \log \nu(B(x,n,\delta))
\end{aligned}
\end{equation}
for every $x\in X$.
\end{definition}

For the existence of such measures, see \cite{mK01, JR09}.  The \emph{pointwise dimension} of a measure $\nu$ at a point $x$ is
\begin{equation}\label{eqn:ptwisedim}
d_\nu(x) = \lim_{r\to 0} \frac{\log \nu(B(x,r))}{\log r},
\end{equation}
provided the limit exists.  It is shown in~\cite[Proposition 5.6]{vC11b} that writing $\ph = P(\phi) - \phi$ and $u=\log \|Df\|$, we have for every $\alpha$
\begin{equation}\label{eqn:levelsetssame}
\dim_u \left\{ x\in X \Mid \frac{S_n\ph(x)}{S_n u(x)} \to \alpha \right\} = \dim_H \{ x\in X \mid d_\nu(x) = \alpha\},
\end{equation}
whence the following result is a corollary of Theorem~\ref{thm:dimu}.

\begin{theorem}\label{thm:dimspec}
Let $X$ be a transitive conformal repeller for a $C^1$ map $f$, let $\nu$ be a weak Gibbs measure for $\phi\in C(X)$, and let $\Ka$ be the level sets for pointwise dimensions of $\nu$.  Let $\ph = P(\phi) - \phi$ and define $T(q)$ by
\begin{equation}\label{eqn:Tq}
P(q\ph - T(q)\log \|Df\|) = 0.
\end{equation}
Let $I = \{ \alpha\in \RR \mid T(q) \geq q\alpha \text{ for all } q\in \RR \}$.  Then
\[
I = \left\{ \frac{\int\ph\,d\mu}{\lambda(\mu)} \Mid \mu \in \Mf(X) \right\},
\]
and for every $\alpha\in \inter I$ we have
\begin{equation}\label{eqn:dimspec}
\begin{aligned}
\dim_H \Ka &= \sup \left\{ \frac{h_\mu(f)}{\lambda(\mu)} \Mid \mu\in\Mf(X), \frac{\int \ph\,d\mu}{\lambda(\mu)} =\alpha\right\} \\
&= \sup \{ \dim_H \mu \mid \mu\in \Mfe(\Ka) \} \\
&= \inf_{q\in\RR} (T(q) - q\alpha).
\end{aligned}
\end{equation}
\end{theorem}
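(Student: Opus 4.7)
The plan is to recognise Theorem~\ref{thm:dimspec} as the case $d=1$ of Theorem~\ref{thm:dimu} with $\Phi = \varphi := P(\phi) - \phi$, $\Psi = u := \log\|Df\|$, and the same $u$ serving as the weight in the definition of the $u$-dimension.  I would first verify the hypotheses.  A conformal repeller is expansive, so the entropy map is upper semi-continuous and $\htop(f) < \infty$; since $X$ admits arbitrarily fine Markov partitions and is semi-conjugate to an irreducible SFT, H\"older continuous potentials form a dense subspace of $C(X)$ admitting unique equilibrium states.  Uniform expansion gives $\|Df(x)\| \geq \rho > 1$ pointwise, so $u(x) \geq \log\rho > 0$; in particular $\int u\,d\mu = \lambda(\mu) \geq \log\rho > 0$ for every $\mu \in \Mf(X)$, verifying both \QQ\ and \PP, and $\hat X = X$ (so $\hat K(\alpha) = K(\alpha)$ trivially).

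Next, I would invoke~\eqref{eqn:levelsetssame} to identify the pointwise-dimension level set $\{x \mid d_\nu(x) = \alpha\}$ (the $K(\alpha)$ of Theorem~\ref{thm:dimspec}) with the ratio level set $\{x \mid S_n\varphi(x)/S_nu(x) \to \alpha\}$ (the $K(\alpha)$ of Theorem~\ref{thm:dimu}), in the sense that the Hausdorff dimension of the former equals the $u$-dimension of the latter.  Combined with the Barreira--Schmeling identity $\dim_H Z = \dim_u Z$ for subsets of a conformal repeller, this matches the left-hand side of~\eqref{eqn:dimspec} with $\dim_u \hat K(\alpha)$.  The first two equalities of~\eqref{eqn:dimspec} are then immediate from parts (i) and (iii) of Theorem~\ref{thm:dimu}.

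Third, I would simplify the quantity $T_u(q) = \inf\{t \in \RR \mid P(q(\varphi - \alpha u) - tu) \leq 0\}$ appearing in Theorem~\ref{thm:dimu}(ii).  Combining the two coefficients of $u$ and using the defining identity $P(q\varphi - T(q) u) = 0$ of Theorem~\ref{thm:dimspec}, one gets $T_u(q) = T(q) - q\alpha$, so $\inf_q T_u(q) = \inf_q(T(q) - q\alpha)$ matches the third equality in~\eqref{eqn:dimspec}.

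Finally, for the identification $I = I(\Phi,\Psi)$, the variational principle gives $T(q) = \sup_{\mu\in\Mf(X)} (h_\mu(f) + q\int\varphi\,d\mu)/\lambda(\mu)$ (division is legal because $\lambda(\mu) > 0$), so $T(q) \geq q\int\varphi\,d\mu/\lambda(\mu)$ for every $\mu$, proving $I(\Phi,\Psi) \subset I$.  For the reverse inclusion, $T(q)/q \geq \alpha_+ := \sup_\mu \int\varphi\,d\mu/\lambda(\mu)$ for $q > 0$ while $T(q)/q \to \alpha_+$ as $q\to \infty$, so $T(q)\geq q\alpha$ for all $q > 0$ forces $\alpha \leq \alpha_+$; the analogous argument at $q\to -\infty$ yields $\alpha \geq \alpha_- := \inf_\mu\int\varphi\,d\mu/\lambda(\mu)$, and weak-$*$ compactness and connectedness of $\Mf(X)$ together with the lower bound on $\lambda$ give $I(\Phi,\Psi) = [\alpha_-,\alpha_+]$.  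The main obstacle I anticipate is only the bookkeeping between the Hausdorff/$u$-dimension and $K(\alpha)/\hat K(\alpha)$ notations across the two theorems; once these are sorted, Theorem~\ref{thm:dimspec} is essentially a direct translation of Theorem~\ref{thm:dimu}.
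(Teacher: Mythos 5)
Your proposal is essentially the same as the paper's: both reduce the statement to Theorem~\ref{thm:dimu} with $d=1$, $\Phi = (\ph)$, $\Psi = (u)$, $u = \log\|Df\|$; both use~\eqref{eqn:levelsetssame} to identify the pointwise-dimension level sets with ratio level sets; and both obtain the third line of~\eqref{eqn:dimspec} from the algebraic identity $T_u(q) = T(q) - q\alpha$ (equivalently $T(q) = T_u(q) + \alpha q$).

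The one place you deviate is the identification $I = I(\Phi,\Psi)$. The paper derives it by noting that $t\mapsto P(q\ph - tu)$ is strictly decreasing (Lemma~\ref{lem:bound-pressure}), so the condition $P(q\ph - q\alpha u) \geq 0$ defining $I'(\Phi,\Psi)$ in~\eqref{eqn:I'} is equivalent to $q\alpha \leq T(q)$, i.e.\ $I'(\Phi,\Psi) = I$; then the general equality $I'(\Phi,\Psi) = I(\Phi,\Psi)$ in~\eqref{eqn:II} of Proposition~\ref{prop:cvp} finishes the job. You instead give a self-contained variational argument: you solve Bowen's equation explicitly to get $T(q) = \sup_\mu (h_\mu(f) + q\int\ph\,d\mu)/\lambda(\mu)$, read off $I(\Phi,\Psi)\subset I$ from non-negativity of entropy, and recover the reverse inclusion from the asymptotics $T(q)/q \to \alpha_\pm$ as $q\to\pm\infty$ together with compactness and connectedness of $\Mf(X)$ to conclude $I(\Phi,\Psi) = [\alpha_-,\alpha_+]$. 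Both arguments are correct; yours is more elementary and self-contained, while the paper's is shorter because it reuses the already-established machinery of Proposition~\ref{prop:cvp}. The rest of your proposal (verification of expansivity, dense subspace of H\"older potentials with unique equilibrium states, uniform positivity of $u$ giving \QQ, \PP, and $\hat X = X$) agrees with the setup established earlier in \S\ref{sec:dimspec}.
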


This generalises results in~\cite{PW97}, which required $f$ to be $C^{1+\eps}$ and the potential $\phi$ to be H\"older continuous.  We note that similar results for interval maps can be found in~\cite{JR09}.

\subsubsection{Non-uniformly expanding conformal repellers}\label{sec:parabolic}

In fact, Theorem~\ref{thm:dimu} also allows us to extend Theorems~\ref{thm:Lyapspec}--\ref{thm:dimspec} to the non-uniformly expanding case.

We suppose that $X$ is a non-uniformly expanding transitive conformal repeller for a $C^1$ map $f$, and that there is a single fixed point $p=f(p)$ such that $\|Df(p)\| = 1$ and $\|Df(x)\| > 1$ for all $x\neq p$.  It follows from~\cite[Proposition 4.4]{vC11b} that $u=\log \|Df\|$ satisfies \PP.

Using Theorem~\ref{thm:dimu} and the observation in~\cite[\S 2.6]{vC11b} that $\dim_u(Z) = \dim_H(Z)$ for all $Z\subset X$ with $\llim_{n\to\infty} \frac 1n \log \|Df^n(x)\|>0$ on $Z$, we see that Theorem~\ref{thm:Lyapspec} applies to transitive non-uniformly expanding conformal repellers as well.

We remark that although Lyapunov spectra for non-uniformly expanding conformal repellers were studied in~\cite{GR09,GPR10}, the  orbit-gluing techniques used there do not give the equality $\dim_H \Ka = \sup \{ \dim_H \mu \mid \mu\in \Mfe(\Ka)\}$, and so this result is new.

Turning our attention to level sets for Birkhoff averages of an arbitrary continuous $\ph$, we observe that Lemma 4.7 in~\cite{vC11b} shows that $\hat K(\alpha) = \Ka$ for all $\alpha\neq \ph(p)$.  Together with Theorem~\ref{thm:dimu}, this gives the following generalisation of Theorem~\ref{thm:Birkspec} to non-uniformly expanding conformal repellers, which extends results in~\cite{JJOP08} and Theorem 4.8 in~\cite{vC11b}.

\begin{theorem}\label{thm:Birkspec2}
Let $X$ be a transitive non-uniformly expanding conformal repeller for a $C^1$ map $f$ with a single indifferent fixed point $p$, and let $\Ka$ be the level sets for Birkhoff averages of a continuous function $\ph$.  Then for every $\alpha\in \inter \{ \int \ph\,d\mu \mid \mu\in\Mf(X) \} \setminus \{ \ph(p) \}$, we have
\begin{align*}
\dim_H \Ka &= \sup \left \{ \frac{ h_\mu(f)}{\lambda(\mu)} \Mid \mu\in \Mf(X), \int \ph\,d\mu = \alpha \right\} \\
&= \sup \{\dim_H \mu \mid \mu\in \Mfe(\Ka) \} \\
&= \inf_{q\in \RR} T_\alpha(q),
\end{align*}
where $T_\alpha(q) = \inf \{t\in \RR \mid P(q(\ph - \alpha) - t\log \|Df\|) \leq 0\}$.
\end{theorem}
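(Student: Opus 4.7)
The plan is to derive Theorem~\ref{thm:Birkspec2} as a specialization of Theorem~\ref{thm:dimu}, in exactly the same way that Theorem~\ref{thm:Birkspec} was derived in the uniformly expanding setting, with added care at the indifferent fixed point. Concretely, I would take $d=1$, $\ph_1 = \ph$, $\psi_1 \equiv 1$, and $u = \log\|Df\|$. With this choice, the ratios $S_n\ph_1/S_n\psi_1$ reduce to the Birkhoff averages $\frac{1}{n}S_n\ph$, so the level sets $\Ka$ in Theorem~\ref{thm:dimu} coincide with those in Theorem~\ref{thm:Birkspec2}, while $T_u(q)$ from \eqref{eqn:Tuq} becomes exactly $T_\alpha(q) = \inf\{t\in\RR \mid P(q(\ph-\alpha) - t\log\|Df\|)\leq 0\}$.

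The next step is to verify that the hypotheses of Theorem~\ref{thm:dimu} hold in this setting. The conformal repeller $X$ admits a Markov partition providing a semi-conjugacy with an irreducible subshift of finite type, so $f|_X$ is expansive, which gives upper semi-continuity of the entropy map and $\htop(f)<\infty$. By standard parabolic thermodynamic machinery (inducing schemes or Young towers for the single indifferent fixed point), every H\"older continuous function on $X$ admits a unique equilibrium state, so the H\"older functions furnish the required dense subspace $D\subset C(X)$. With $\psi_1\equiv 1$, condition \QQ\ is trivially satisfied. Finally, \cite[Proposition~4.4]{vC11b} gives that $u=\log\|Df\|$ satisfies \PP\ because the only point with $Df=1$ is the single fixed point $p$.

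Applying Theorem~\ref{thm:dimu} then yields, for every $\alpha$ in the interior of $\{\int\ph\,d\mu \mid \mu\in\Mf(X)\}$, the conditional variational principle and the Bowen-type formula
\[
\dim_u \hat K(\alpha) = \sup \left\{ \frac{h_\mu(f)}{\int u\,d\mu} \Mid \mu\in\Mfa(X), \int u\,d\mu>0\right\} = \inf_{q\in\RR} T_\alpha(q),
\]
together with ergodic measures on $\hat K(\alpha)$ whose $u$-dimension approximates this value. To finish, I would translate these statements about $\hat K(\alpha)$ and $\dim_u$ into statements about $\Ka$ and $\dim_H$ using two observations from \cite{vC11b}: first, that $\dim_u Z = \dim_H Z$ for every $Z\subset \hat X$ (\S~2.6), which turns $\dim_u\hat K(\alpha)$ into $\dim_H\hat K(\alpha)$ and the ratio $h_\mu(f)/\int u\,d\mu$ into $h_\mu(f)/\lambda(\mu) = \dim_H\mu$ for ergodic $\mu$ supported in $\hat X$; and second, Lemma~4.7 of \cite{vC11b}, which asserts that $\hat K(\alpha) = \Ka$ whenever $\alpha\neq \ph(p)$. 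The latter is precisely why the value $\ph(p)$ must be excluded from the domain, since the trivial orbit $\{p\}$ contributes $\alpha=\ph(p)$ but carries no expansion.

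The main non-routine step is the existence of the dense subspace $D$ of potentials with unique equilibrium states in the presence of the indifferent fixed point; once that is in place, the remainder is essentially bookkeeping, matching up $T_u$ with $T_\alpha$, $\hat K(\alpha)$ with $\Ka$, and $\dim_u$ with $\dim_H$. The rest follows by direct substitution into Theorem~\ref{thm:dimu}.
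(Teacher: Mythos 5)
Your overall strategy matches the paper's proof exactly: set $\psi\equiv 1$ and $u=\log\|Df\|$ in Theorem~\ref{thm:dimu} (so that $T_u(q)$ becomes $T_\alpha(q)$), invoke~\cite[Lemma 4.7]{vC11b} to identify $\hat K(\alpha)$ with $\Ka$ for all $\alpha\neq\ph(p)$, and use the observation from~\cite[\S 2.6]{vC11b} that $\dim_u$ coincides with $\dim_H$ on the set of points with positive lower Lyapunov exponent. That is precisely the paper's (very short) proof, which reads Theorem~\ref{thm:Birkspec2} as Theorem~\ref{thm:Birkspec} modulo the substitution $\hat K(\alpha)=\Ka$.

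However, your verification of the dense-subspace hypothesis is flawed. You assert that ``by standard parabolic thermodynamic machinery (inducing schemes or Young towers for the single indifferent fixed point), every H\"older continuous function on $X$ admits a unique equilibrium state.'' This is false in the presence of an indifferent fixed point. For a Manneville--Pomeau-type map with $\|Df(x)\|\approx 1 + c\,d(x,p)^{\beta}$ near $p$, the function $\log\|Df\|$ (and more generally any H\"older function vanishing at $p$ with similar asymptotics) exhibits a phase transition: at the critical parameter the potential $-t\log\|Df\|$ admits \emph{two} equilibrium states, the Dirac mass at $p$ and an SRB-type measure. So H\"older potentials do not in general form an admissible choice of $D$ here. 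The uniformly expanding argument in \S\ref{sec:dimspec} --- ``Markov partition $\Rightarrow$ SFT semi-conjugacy $\Rightarrow$ H\"older on $X$ has unique equilibrium state'' --- relies on the semi-conjugacy being bi-H\"older, which fails in the parabolic case because cylinder diameters near $p$ shrink only polynomially, not exponentially. The correct candidate for $D$ is the space of $\psi\in C(X)$ whose pullback to the symbolic model is H\"older (a strictly smaller class than the H\"older functions on $X$, and one that excludes the critical geometric potential). You correctly flag this as the main non-routine step, but your proposed resolution would not survive scrutiny; the rest of your argument is sound and essentially identical to the paper's.
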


Finally, if $\nu$ is a weak Gibbs measure for $\phi$, then~\eqref{eqn:levelsetssame} holds just as before, and replacing the definition of $T(q)$ in~\eqref{eqn:Tq} with
\[
T(q) = \inf\{ t\in \RR \mid P(q\ph - t\log \|Df\|) \leq 0 \},
\]
we see that Theorem~\ref{thm:dimspec} applies to transitive non-uniformly expanding conformal repellers as well.  (See also the results in~\cite{JR09}.)

\subsection{Other applications}\label{sec:higherdim}

We emphasise that Theorem~\ref{thm:main2} can also be used to generalise the results in~\cite{BSS02} on higher-dimensional spectra.  For example, the following consequence of Theorem~\ref{thm:main2} generalises statements (1) and (2) of Theorem 5 in that paper to the setting where $f$ is $C^1$, not $C^{1+\eps}$, and $\nu_i$ are weak Gibbs measures for potentials $\phi_i\in C(X)$ that need not be H\"older continuous.

\begin{theorem}\label{thm:higherdim}
Let $X$ be a transitive repeller for a $C^1$ map $f$ and let $\nu_1, \dots, \nu_d$ be weak Gibbs measures for continuous functions $\ph_i$ with $P(\ph_i) = 0$.  Let $\Ka$ be the level sets for local entropies of the measures $\nu_i$:
\[
\Ka = \left\{ x\in X \Mid h_{\nu_i}(x) := \lim_{\delta\to 0} \lim_{n\to\infty} -\frac 1n \log \nu_i(B(x,n,\delta)) = \alpha_i \text{ for all } i \right\}.
\]
Let $I = \{ \int \Phi\,d\mu \mid \mu\in \Mf(X) \}$.  Then $\Ka = \emptyset$ for all $\alpha\notin I$, while for $\alpha\in \inter I$, we have
\begin{align*}
\htop \Ka &= \sup \left\{ h_\mu(f) \Mid \mu\in \Mf(X), \int \Phi\,d\mu = -\alpha \right\} \\
&= \sup \{ h_\mu(f) \mid \mu\in \Mfe(\Ka) \} \\
&= \inf_{q\in \RR^d} (P(\langle q,\Phi\rangle) + \langle q,\alpha\rangle).
\end{align*}
\end{theorem}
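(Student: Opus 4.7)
The plan is to derive Theorem~\ref{thm:higherdim} from Theorem~\ref{thm:main2}, by first translating the level sets of local entropies into level sets of Birkhoff averages and then applying the main theorem.

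First I would use the weak Gibbs property together with $P(\ph_i)=0$ to show that $h_{\nu_i}(x)=\alpha_i$ if and only if $\lim_{n\to\infty}\frac{1}{n}S_n\ph_i(x)=-\alpha_i$. Setting $a_n(\delta)=\frac{1}{n}S_n\ph_i(x)-\frac{1}{n}\log\nu_i(B(x,n,\delta))$, the definition~\eqref{eqn:wkGibbs} with $P(\ph_i)=0$ gives $\lim_{\delta\to 0}\llim_n a_n(\delta)=\lim_{\delta\to 0}\ulim_n a_n(\delta)=0$ pointwise on $X$. Since $\frac{1}{n}S_n\ph_i(x)$ is independent of $\delta$, a short interchange-of-limits argument (essentially the one in~\cite[Proposition 5.6]{vC11b} behind~\eqref{eqn:levelsetssame}) yields the equivalence. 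Consequently, if one sets $\Phi=(\ph_1,\ldots,\ph_d)$ and $\Psi=(1,\ldots,1)$, then the set $K(\alpha)$ of Theorem~\ref{thm:higherdim} coincides with the level set $K_{\Phi,\Psi}(-\alpha)$ in the notation of~\eqref{eqn:Ka}.

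Next I would verify the hypotheses of Theorem~\ref{thm:main2}. A transitive $C^1$ repeller admits Markov partitions of arbitrarily small diameter and is semi-conjugate to an irreducible subshift of finite type; in particular $f|_X$ is expansive (so the entropy map is upper semi-continuous) and has finite topological entropy, while the H\"older continuous functions form a dense subspace $D\subset C(X)$ on which every potential has a unique equilibrium state. Condition~\QQ\ is automatic since $\Psi\equiv\mathbf{1}$ gives $\int\psi_i\,d\mu=1>0$. Applying Theorem~\ref{thm:main2} with $A=\{-\alpha\}$ and $\xi=0$ then yields the three desired equalities, using the identity $\int(\Phi-(-\alpha)*\Psi)\,d\mu=0\iff\int\Phi\,d\mu=-\alpha$ together with
\begin{equation*}
\SSS(-\alpha,0)=\inf_{q\in\RR^d}P(\langle q,\Phi+\alpha*\Psi\rangle)=\inf_{q\in\RR^d}\bigl[P(\langle q,\Phi\rangle)+\langle q,\alpha\rangle\bigr],
\end{equation*}
where the last step uses $\Psi\equiv\mathbf{1}$ and $P(\phi+c)=P(\phi)+c$. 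The case $\alpha\notin I$ follows from Theorem~\ref{thm:main}, since the empirical measures of any $x\in K(\alpha)$ accumulate at some $\mu\in\Mf(X)$ with $\int\Phi\,d\mu=-\alpha$.

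The principal obstacle is the first step: one must convert the double limit in the weak Gibbs definition into a single Birkhoff limit and verify that the two level sets agree pointwise (not merely up to a null set for every relevant invariant measure). This is a careful but elementary interchange-of-limits argument. Once this identification is in hand, everything afterwards is a mechanical application of Theorem~\ref{thm:main2}.
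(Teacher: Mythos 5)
Your argument matches the paper's proof essentially word for word: the paper's entire proof is the one-line observation ``It follows from~\eqref{eqn:wkGibbs} that $K(\alpha) = \{ x \mid \frac 1n S_n \Phi(x) \to -\alpha \}$.\ Theorem~\ref{thm:main2} does the rest.'' Your version simply spells out the interchange-of-limits step, the verification of the hypotheses of Theorem~\ref{thm:main2} with $\Psi \equiv \mathbf{1}$, and the calculation $\SSS(-\alpha,0) = \inf_q\bigl[P(\langle q,\Phi\rangle)+\langle q,\alpha\rangle\bigr]$, all of which the paper leaves implicit.
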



Other simultaneous level sets are considered in \cite[Theorem 6]{BSS02}, on which various combinations of the pointwise dimension, local entropy, and Lyapunov exponent are specified.  Once again, the results there can be generalised to the case of a $C^1$ map and weak Gibbs measures for non-H\"older potentials by applying Theorem~\ref{thm:main2}.

An important application of higher-dimensional spectra is to illustrate the relationship between mixed and non-mixed spectra, as in \cite[\S 7]{BSS02}.  We describe a result along the same lines in the present context.

Given $\Phi,\Psi\in C(X)^d$ satisfying \QQ, consider the finer level sets
\[
\tilde K(\beta,\gamma) = \left\{ x\in X \Mid \frac 1n S_n \Phi(x) \to \beta, \frac 1n S_n \Psi(x) \to \gamma \right \}
\]
for $\beta,\gamma \in \RR^d$.  Observe that $\tilde K(\alpha * \gamma, \gamma) \subset K(\alpha)$ for every $\gamma\in \RR^d$, but in general $\bigcup_\gamma \tilde K(\alpha*\gamma,\gamma) \neq K(\alpha)$.  

Now write $\tilde\Phi = (\ph_1,\dots,\ph_d,\psi_1,\dots,\psi_d) \in \RR^{2d}$ and let $\tilde\Psi\in \RR^{2d}$ be given by $\tilde\psi_j \equiv 1$ for all $j$.  Then applying Corollary~\ref{cor:intenough} to the set $\{(\alpha*\gamma,\gamma) \mid \gamma\in\RR^d\} \cap I(\tilde\Phi,\tilde\Psi)$, we have the following result.


\begin{theorem}\label{thm:finer}
Let $X$ be a compact metric space and $f\colon X\to X$ a continuous map such that the entropy map $\Mf(X) \to \RR$ is upper semi-continuous and $\htop(f) < \infty$.  Suppose that there is a dense subspace $D \subset C(X)$ such that every $\phi\in D$ has a unique equilibrium state.

Let $\Phi,\Psi\in C(X)^d$ satisfy \QQ.  Then for every $\alpha\in \RR^d$ and $\xi\in C(X)$ we have
\[
P_{\Ka}(\xi) = \sup_{\gamma\in \RR^d} P_{\tilde K(\alpha*\gamma,\gamma)}(\xi).
\]
\end{theorem}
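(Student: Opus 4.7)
The easy direction is by monotonicity.  For each $\gamma\in\RR^d$, the inclusion $\tilde K(\alpha*\gamma,\gamma)\subseteq K(\alpha)$ holds: whenever $\tfrac{1}{n}S_n\ph_i(x)\to\alpha_i\gamma_i$ and $\tfrac{1}{n}S_n\psi_i(x)\to\gamma_i$, condition \QQ\ forces $\gamma_i>0$ (otherwise no invariant measure could sit on $\tilde K(\alpha*\gamma,\gamma)$), and then $S_n\ph_i(x)/S_n\psi_i(x)\to\alpha_i$.  Since topological pressure is monotone in the underlying set, $\sup_{\gamma\in\RR^d} P_{\tilde K(\alpha*\gamma,\gamma)}(\xi)\leq P_{K(\alpha)}(\xi)$.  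For $\alpha\notin I(\Phi,\Psi)$ both sides are $-\infty$ by Theorem~\ref{thm:main}, since any measure carried by some $\tilde K(\alpha*\gamma,\gamma)$ would witness $\alpha\in I(\Phi,\Psi)$.

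For the reverse inequality with $\alpha\in\inter I(\Phi,\Psi)$, I would pass to the $2d$-dimensional multifractal system with $\tilde\Phi=(\ph_1,\dots,\ph_d,\psi_1,\dots,\psi_d)\in C(X)^{2d}$ and $\tilde\Psi\equiv(1,\dots,1)\in C(X)^{2d}$, for which \QQ\ is automatic and every component of $\tilde\Psi$ is identically $1$, so Corollary~\ref{cor:intenough} applies with $J=\{1,\dots,2d\}$ (and the affine constraint $A_J^{\beta_0}=\RR^{2d}$ is vacuous).  Set $B=\{(\alpha*\gamma,\gamma)\mid \gamma\in\RR^d\}$, an affine subspace of $\RR^{2d}$, and $A=B\cap I(\tilde\Phi,\tilde\Psi)$.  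A standard convex-geometric fact---every point of the closed convex set $A$ is the limit, along the segment to any point of $B\cap\inter I(\tilde\Phi,\tilde\Psi)$, of points in $B\cap\inter I(\tilde\Phi,\tilde\Psi)$---yields the closure identity $A=\overline{A\cap\inter I(\tilde\Phi,\tilde\Psi)}$ required for Corollary~\ref{cor:intenough}, whose conclusion then reads
\[
P_{\tilde K(A)}(\xi)=\sup_{(\beta,\gamma)\in A} P_{\tilde K(\beta,\gamma)}(\xi)=\sup_{\gamma\in\RR^d} P_{\tilde K(\alpha*\gamma,\gamma)}(\xi),
\]
where the last equality uses $\tilde K(\beta,\gamma)=\emptyset$ for $(\beta,\gamma)\notin I(\tilde\Phi,\tilde\Psi)$ by Theorem~\ref{thm:main}.

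To identify $P_{\tilde K(A)}(\xi)$ with $P_{K(\alpha)}(\xi)$, I would establish the measure inclusion $\MMM^f(K(\alpha))\subseteq\MMM^f(\tilde K(A))$.  Any $\mu\in\MMM^f(K(\alpha))$ ergodic-decomposes as $\mu=\int\mu_y\,d\nu(y)$ with $\mu_y(K(\alpha))=1$ for $\nu$-a.e.\ $y$; by Birkhoff's theorem $\mu_y$-a.e.\ $x$ has $\tfrac{1}{n}S_n\Phi(x)\to\int\Phi\,d\mu_y$ and $\tfrac{1}{n}S_n\Psi(x)\to\int\Psi\,d\mu_y$, while the $K(\alpha)$ condition together with \QQ\ forces $\int\Phi\,d\mu_y=\alpha*\int\Psi\,d\mu_y$; hence $(\int\Phi\,d\mu_y,\int\Psi\,d\mu_y)\in A$ and $\mu_y(\tilde K(A))=1$, so $\mu(\tilde K(A))=1$.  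Applying the conditional variational principles from Theorem~\ref{thm:main2} to $K(\alpha)$ in the original system and from Corollary~\ref{cor:intenough} to $\tilde K(A)$ in the $2d$-system then gives $P_{K(\alpha)}(\xi)\leq P_{\tilde K(A)}(\xi)$, closing the chain.  The step I expect to be the main technical obstacle is verifying $B\cap\inter I(\tilde\Phi,\tilde\Psi)\neq\emptyset$ whenever $\alpha\in\inter I(\Phi,\Psi)$: this amounts to producing an invariant measure whose joint integrals sit in the relative interior of the convex body $I(\tilde\Phi,\tilde\Psi)\subseteq\RR^{2d}$ rather than on a lower-dimensional face, and it is the geometric input that underlies both the closure identity and the applicability of Corollary~\ref{cor:intenough}.
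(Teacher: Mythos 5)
Your proposal follows essentially the same route as the paper: both pass to the $2d$-dimensional system $\tilde\Phi=(\Phi,\Psi)$, $\tilde\Psi\equiv(1,\dots,1)$, and invoke Corollary~\ref{cor:intenough} with $A=\{(\alpha*\gamma,\gamma)\mid\gamma\}\cap I(\tilde\Phi,\tilde\Psi)$.  Two observations about your version.

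First, the measure-inclusion detour in your third paragraph is correct but unnecessarily circuitous: the sets are in fact \emph{equal}, $K(\alpha)=\tilde K(A)$.  Indeed, any accumulation point $(\beta,\gamma)$ of $(\tfrac1n S_n\Phi(x),\tfrac1n S_n\Psi(x))$ is $(\int\Phi\,d\mu,\int\Psi\,d\mu)$ for some limit $\mu$ of empirical measures, hence lies in $I(\tilde\Phi,\tilde\Psi)$; if $x\in K(\alpha)$ one checks directly (splitting according to whether $\gamma_i>0$ or $\gamma_i=0$) that $\beta=\alpha*\gamma$, so $K(\alpha)\subset\tilde K(A)$, and the reverse inclusion you verified in your first paragraph.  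This set equality gives $P_{K(\alpha)}(\xi)=P_{\tilde K(A)}(\xi)$ outright, and then Corollary~\ref{cor:intenough} finishes, with no need to go through the variational principles again.

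Second, the obstacle you flag — that $B\cap\inter I(\tilde\Phi,\tilde\Psi)\neq\emptyset$ when $\alpha\in\inter I(\Phi,\Psi)$ — is a genuine point, left as a gap in your attempt, and it is also glossed over in the paper's one-line proof.  It can be closed as follows.  Write $\pi\colon\RR^{2d}\to\RR^d$, $\pi(\beta,\gamma)=\beta-\alpha*\gamma$, so $B=\ker\pi$ and $\pi(I(\tilde\Phi,\tilde\Psi))=J(\Phi,\Psi,\alpha)$ from \S\ref{sec:dichotomy}.  When $\inter I(\tilde\Phi,\tilde\Psi)\neq\emptyset$: since $\pi$ is a surjective linear map it is open, so $\pi(\inter I(\tilde\Phi,\tilde\Psi))$ is an open convex set whose closure contains $J(\Phi,\Psi,\alpha)$, and hence $\pi(\inter I(\tilde\Phi,\tilde\Psi))=\inter J(\Phi,\Psi,\alpha)$; by Lemma~\ref{lem:IhatI} the hypothesis $\alpha\in\inter I(\Phi,\Psi)$ means $0\in\inter J(\Phi,\Psi,\alpha)$, so there is a point of $\inter I(\tilde\Phi,\tilde\Psi)$ mapped to $0$, i.e.\ lying in $B$.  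When $\inter I(\tilde\Phi,\tilde\Psi)=\emptyset$, Corollary~\ref{cor:intenough} as stated does not apply, but the degeneracy means every accumulation point of $(\tfrac1n S_n\Phi,\tfrac1n S_n\Psi)$ satisfies a fixed affine relation; one can quotient this relation out and induct on dimension, or, more simply, observe that the only nonempty sets $\tilde K(\alpha*\gamma,\gamma)$ are those with $(\alpha*\gamma,\gamma)$ lying in the affine hull of $I(\tilde\Phi,\tilde\Psi)$, and rerun the argument with relative interiors.  Your proposal should either carry out one of these reductions or at least note that the ambient-interior hypothesis of Corollary~\ref{cor:intenough} must be upgraded to a relative-interior one for the degenerate case.

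Finally, note that neither your write-up nor the paper's terse proof directly treats $\alpha\in\partial I(\Phi,\Psi)$; your first paragraph covers $\alpha\notin I(\Phi,\Psi)$ and your second covers $\alpha\in\inter I(\Phi,\Psi)$, but the boundary requires a separate argument (or a limiting argument via the continuity statements in \S2.5) to complete the claim for \emph{every} $\alpha\in\RR^d$.
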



\subsection{Bad behaviour on the boundary of $I(\Phi,\Psi)$}\label{sec:notUSC}

We give an example where $\hat\TTT < \lF$ to show that $\hat\TTT$ does not fit into the sequence of inequalities in~\eqref{eqn:Fleq}.  Note that it is easy to get examples with $\uF < \hat\TTT$ by using convexity of $\hat\TTT$ and considering systems for which $\uF$ is not convex~\cite{vC10}.

Let $X \subset \RR^2$ be the unit disc with polar coordinates $(r,\theta)$, and define a continuous map $f\colon X\to X$ by $f(r,\theta) =  (r,2\theta)$.  Consider the spectra defined by $d=1$, $\ph(r,\theta) = r$, $\psi \equiv 1$.  Then $\ph$ is constant along each orbit, so given any $U\subset \RR$, we have
\[
G_n(U) = \{(r,\theta)\in X \mid r\in U \}.
\]
In particular, whenever $U$ is open and $U\cap [0,1] \neq \emptyset$ we have
\[
\lim_{\delta\to 0} \lim_{n\to\infty} \frac 1n \log \Lambda_n(G_n(U),0,\delta) = \log 2,
\]
and so for all $\alpha\in [0,1]$ we have $\lF(\alpha,0) = \uF(\alpha,0) = \log 2$.  However, $K(0) = \{(0,0)\}$ is a singleton, and so $\FFF(0,0) = 0 < \lF(0,0)$.

Furthermore, the only invariant measure $\mu$ with $\int \ph\,d\mu = 0$ is the $\delta$-measure on the fixed point $0$, which has zero entropy, and so we have
\[
\TTT(0,0) = \hat\TTT(0,0) = \FFF(0,0) = 0 < \log 2 = \lF(0,0) = \uF(0,0).
\]

\section{Proofs}\label{sec:pfs}

\subsection{Proof of Proposition~\ref{prop:cptstable}}

We prove Proposition~\ref{prop:cptstable} by finding $\alpha\in A$ such that $\uF(\alpha,\xi) \geq \uF(A,\xi)$; the other inequality follows immediately from containment for all $\alpha\in A$.  Note that $\alpha$ is allowed to depend on $\xi$.  We will need the following property of upper capacity pressure.

\begin{lemma}\label{lem:capstable}
Let $(Z_n^1), \dots, (Z_n^k)$ be a finite collection of sequences of subsets of $X$, and write $Z_n = \bigcup_{j=1}^k Z_n^j$.  Then for every $\xi\in C(X)$ we have
\begin{equation}\label{eqn:capstable}
\uP_{(Z_n)}(\xi) = \max_{1\leq j \leq k} \uP_{(Z_n^k)_n}(\xi).
\end{equation}
\end{lemma}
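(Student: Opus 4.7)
The plan is to verify the two inequalities separately; both are elementary, but the upper bound requires a small observation about how $\limsup$ interacts with a finite maximum.

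For the direction $\uP_{(Z_n)}(\xi) \geq \max_j \uP_{(Z_n^j)}(\xi)$, I would simply note monotonicity: since $Z_n^j \subset Z_n$ for each $j$, any cover of $Z_n$ by Bowen balls is a cover of $Z_n^j$, hence $\Lambda_n(Z_n^j,\xi,\delta) \leq \Lambda_n(Z_n,\xi,\delta)$. Passing to $\ulim_{n\to\infty}\frac{1}{n}\log$ and then $\lim_{\delta\to 0}$ gives $\uP_{(Z_n^j)}(\xi) \leq \uP_{(Z_n)}(\xi)$, and taking the max over the finite index set yields the desired inequality.

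For the reverse direction, fix $\delta>0$. Given $\eps>0$ and, for each $j$, a set $E^j$ almost achieving $\Lambda_n(Z_n^j,\xi,\delta)$, the union $E = \bigcup_j E^j$ covers $Z_n = \bigcup_j Z_n^j$, so
\[
\Lambda_n(Z_n,\xi,\delta) \leq \sum_{j=1}^k \Lambda_n(Z_n^j,\xi,\delta) \leq k\,\max_{1\leq j\leq k}\Lambda_n(Z_n^j,\xi,\delta).
\]
Taking $\frac{1}{n}\log$, the factor $k$ contributes $\frac{\log k}{n}\to 0$. The main (mild) point to check is that for any finite collection of real sequences $(a_n^j)_{n\in\NN}$, $j=1,\dots,k$, one has $\ulim_n \max_j a_n^j = \max_j \ulim_n a_n^j$; this follows by picking a subsequence along which the left-hand side is realised, then applying the pigeonhole principle to extract a sub-subsequence on which one fixed index $j_0$ attains the maximum. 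Applying this to $a_n^j = \frac{1}{n}\log \Lambda_n(Z_n^j,\xi,\delta)$ gives
\[
\ulim_{n\to\infty}\frac{1}{n}\log \Lambda_n(Z_n,\xi,\delta) \leq \max_{1\leq j\leq k}\ulim_{n\to\infty}\frac{1}{n}\log \Lambda_n(Z_n^j,\xi,\delta) \leq \max_{1\leq j\leq k}\uP_{(Z_n^j)}(\xi),
\]
and letting $\delta\to 0$ completes the proof.

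There is no real obstacle: the only non-cosmetic subtlety is the finiteness of the index set, which is used both to absorb the multiplicative factor $k$ and to swap $\ulim$ with $\max$. Both steps fail for countable collections, which is consistent with the observation in the remark following Proposition~\ref{prop:cptstable} that countable stability results of this type are strictly weaker than the one needed here.
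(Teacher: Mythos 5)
Your proof is correct and takes essentially the same approach as the paper: both rest on the bound $\Lambda_n(Z_n,\xi,\delta) \leq \sum_{j} \Lambda_n(Z_n^j,\xi,\delta) \leq k\max_j \Lambda_n(Z_n^j,\xi,\delta)$ followed by a pigeonhole argument over the finite index set. Your packaging of the pigeonhole as an interchange of $\ulim_n$ with $\max_j$ for each fixed $\delta$, deferring $\delta\to 0$ to the end, is a slightly cleaner rendering of the paper's phrase that ``some $j$ occurs infinitely often as $n\to\infty$ and $\delta\to 0$.''
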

\begin{proof}
One inequality follows by containment, so it suffices to show that there exists $j$ with $\uP_{(Z_n)}(\xi) \leq \uP_{(Z_n^j)_n}(\xi)$.  To this end, fix $\delta>0$ and observe that $\Lambda_n(Z_n,\xi,\delta) \leq \sum_{j=1}^k \Lambda_n(Z_n^j,\xi,\delta)$.

Thus for every $n$ there exists $j=j_n$ with $\Lambda_n(Z_n^j,\xi,\delta) \geq \frac 1k \Lambda_n(Z_n,\xi,\delta)$.  Since there are only finitely many $j$, some $j$ occurs infinitely often as $n\to\infty$ and $\delta\to 0$; this is the desired $j$.
\end{proof}

Now we fix $\xi\in C(X)$ and find $\alpha$ as above.  Fix a sequence $\gamma_k>0$ with $\gamma_k\to 0$, and for every $k$, let $A_k\subset A$ be finite and $\gamma_k$-dense in $A$.  This implies that $U_k := \bigcup_{\alpha\in A_k} B(\alpha,\gamma_k)$ is an open set containing $A$, and so
\[
P_{(G_n(U_k))_n}(\xi) \geq \uF(A,\xi)
\]
by the definition of $\uF(A,\xi)$.  Furthermore, the definition of $G_n(U_k)$ implies that $G_n(U_k) = \bigcup_{\alpha\in A_k} G_n(B(\alpha,\gamma_k))$, and so by Lemma~\ref{lem:capstable} there exists $\alpha^k\in A_k$ such that
\begin{equation}\label{eqn:Palphak}
P_{(G_n(B(\alpha^k,\gamma_k)))_n}(\xi) \geq \uF(A,\xi).
\end{equation}
By compactness of $A$, there exists $\alpha\in A$ and a sequence $k_j$ such that $\alpha^{k_j} \to \alpha$.  Now for every $\gamma>0$ we have $B(\alpha^{k_j},\gamma_{k_j}) \subset B(\alpha,\gamma)$ for all sufficiently large $j$, which by~\eqref{eqn:Palphak} implies that
\[
P_{(G_n(B(\alpha,\gamma)))_n}(\xi) \geq \uF(A,\xi).
\]
Taking an infimum over $\gamma>0$ yields $\uF(\alpha,\xi) \geq \uF(A,\xi)$, as desired.

\subsection{Proof of Proposition~\ref{prop:dichotomy}}\label{sec:dichotomy}

The key is to consider whether or not $0$ is contained in the closed convex set $J(\Phi,\Psi,\alpha) = \{ \int (\Phi-\alpha*\Psi) \,d\mu \mid \mu \in \Mf(X) \} \subset \RR^d$.

\begin{lemma}\label{lem:notinI}
If  $0\notin J(\Phi,\Psi,\alpha)$, then $\SSS(\alpha,0)=-\infty$.
\end{lemma}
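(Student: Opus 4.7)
The plan is to combine a standard convex-separation argument in $\RR^d$ with the variational principle for $P$. The first observation is that since $X$ is compact metric, the space $\Mf(X)$ is weak-$*$ compact, and the map $\mu \mapsto \int(\Phi - \alpha*\Psi)\,d\mu$ is weak-$*$ continuous and affine. Consequently, its image $J(\Phi,\Psi,\alpha)$ is a compact convex subset of $\RR^d$; this is the structural fact that lets finite-dimensional separation do all the work.

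Next, because $0 \notin J(\Phi,\Psi,\alpha)$ and $J$ is compact and convex, I would apply elementary convex separation in $\RR^d$ to produce a vector $q_0 \in \RR^d$ and a constant $c > 0$ such that $\langle q_0, v\rangle \geq c$ for every $v \in J$. Translated back to measures, this says
\[
\int \langle q_0, \Phi - \alpha*\Psi\rangle \, d\mu \geq c \quad \text{for every } \mu \in \Mf(X),
\]
and the uniformity of this lower bound in $\mu$ (which comes out of compactness of $\Mf(X)$) is what makes the rest of the argument go through.

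Finally, I would apply the variational principle along the ray $q = -\lambda q_0$ with $\lambda > 0$:
\[
P(\langle -\lambda q_0, \Phi - \alpha*\Psi\rangle) = \sup_{\mu\in\Mf(X)} \Bigl(h_\mu(f) - \lambda \int \langle q_0, \Phi - \alpha*\Psi\rangle\,d\mu\Bigr) \leq \htop(f) - \lambda c.
\]
Since the standing hypothesis $\htop(f) < \infty$ is in force, letting $\lambda \to \infty$ drives the right-hand side to $-\infty$. Taking the infimum over $q$ in the definition of $\SSS(\alpha,0)$ then yields $\SSS(\alpha,0) = -\infty$.

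I do not anticipate a real obstacle: finite dimensionality trivialises the separation step, and the assumption $\htop(f) < \infty$ is precisely what allows a single rescaled direction to overwhelm the entropy contribution. The argument would collapse if $\htop(f) = +\infty$, but that case is explicitly excluded in the paper.
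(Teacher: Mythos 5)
Your proof is correct and follows essentially the same route as the paper: separate $0$ from the compact convex set $J(\Phi,\Psi,\alpha)$ by a hyperplane, feed the resulting uniform linear lower bound into the variational principle along a ray, and let the ray parameter tend to infinity using $\htop(f)<\infty$. The only superficial difference is the sign convention in the separating functional, and you spell out why $J$ is compact and convex (weak-$*$ compactness of $\Mf(X)$ plus continuity and affineness of the integral map), a point the paper takes for granted.
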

\begin{proof}
Because $J(\Phi,\Psi,\alpha)$ is closed and convex, the assumption that $0\notin J(\Phi,\Psi,\alpha)$ implies that there exists $q\in \RR^d$ and $\eps>0$ such that every $\mu\in \Mf(X)$ satisfies $\langle q, \int (\Phi - \alpha*\Psi)\,d\mu\rangle \leq -\eps$.  Then for all $\lambda>0$ we have
\begin{align*}
P(\langle \lambda q, \Phi - \alpha*\Psi\rangle) 
&= \sup_{\mu\in \Mf(X)} \left(h_\mu(f) + \int \langle \lambda q, \Phi - \alpha*\Psi \rangle \,d\mu \right) \\
&\leq \htop(f) - \lambda \eps.
\end{align*}
Since $\lambda$ can be arbitrarily large, we see that
\[
\SSS(\alpha,0) \leq \inf_{\lambda>0} P(\langle \lambda q, \Phi - \alpha*\Psi \rangle) = -\infty.\qedhere
\]
\end{proof}

\begin{lemma}\label{lem:inI}
If $0\in J(\Phi,\Psi,\alpha)$, then $\SSS(\alpha,0)\geq 0$.
\end{lemma}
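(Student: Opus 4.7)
The plan is to apply the variational principle to obtain a lower bound on $P(\langle q,\Phi-\alpha*\Psi\rangle)$ that is uniform in $q$, then take the infimum. The hypothesis that $0 \in J(\Phi,\Psi,\alpha)$ is exactly what removes the $q$-dependence from this lower bound.

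First I would extract a witness measure: since $J(\Phi,\Psi,\alpha)$ is the image of the weak-$*$ compact set $\Mf(X)$ under the continuous map $\mu \mapsto \int(\Phi-\alpha*\Psi)\,d\mu$, it is itself compact, hence closed. Thus the assumption $0 \in J(\Phi,\Psi,\alpha)$ yields some $\mu \in \Mf(X)$ with $\int (\Phi-\alpha*\Psi)\,d\mu = 0$.

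Next I would fix an arbitrary $q \in \RR^d$ and apply the variational principle for the pressure to this particular measure:
\[
P(\langle q,\Phi-\alpha*\Psi\rangle) \geq h_\mu(f) + \int \langle q,\Phi-\alpha*\Psi\rangle\,d\mu
= h_\mu(f) + \bigl\langle q, \textstyle\int(\Phi-\alpha*\Psi)\,d\mu\bigr\rangle
= h_\mu(f) \geq 0.
\]
Taking the infimum over $q \in \RR^d$ then gives $\SSS(\alpha,0) \geq 0$, as required.

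There is essentially no obstacle here: the only thing to verify is that $J(\Phi,\Psi,\alpha)$ really is closed so that we may choose an honest invariant measure realising the value $0$, rather than only a sequence approximating it. This is automatic from weak-$*$ compactness of $\Mf(X)$ and continuity of the integration functionals, so the argument is essentially a one-line consequence of the variational principle.
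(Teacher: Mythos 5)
Your proof is correct and matches the paper's own argument essentially verbatim: pick a witness measure with $\int(\Phi-\alpha*\Psi)\,d\mu=0$ and apply the variational principle to bound $P(\langle q,\Phi-\alpha*\Psi\rangle)$ below by $h_\mu(f)\geq 0$ for every $q$. The closedness remark is harmless but unnecessary, since $0\in J(\Phi,\Psi,\alpha)$ already means $0$ is realised by some $\mu\in\Mf(X)$ by definition of $J$.
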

\begin{proof}
By the hypothesis on $J(\Phi,\Psi,\alpha)$, there exists $\mu\in \Mf(X)$ such that $\int( \Phi - \alpha*\Psi)\,d\mu = 0$.  Thus we have
\[
P(\langle q,\Phi - \alpha*\Psi\rangle) \geq h_\mu(f) + \int \langle q,\Phi - \alpha*\Psi\rangle\,d\mu = h_\mu(f) \geq 0
\]
for every $q\in \RR^d$, whence $\SSS(\alpha,0)\geq 0$.
\end{proof}

\subsection{Proof of Theorem~\ref{thm:main}}\label{sec:pfmain}

The proof of Theorem~\ref{thm:main} comes in two parts.  First we compare the fine and coarse multifractal spectra, showing that $\FFF \leq \lF$, the second inequality in~\eqref{eqn:Fleq}; then we compare the coarse spectrum with the predicted spectrum, showing that $\uF \leq \SSS$, the fourth inequality in~\eqref{eqn:Fleq}.  The third inequality in~\eqref{eqn:Fleq} is immediate, and the first follows directly from~\eqref{eqn:halfVP}.

\subsubsection{Comparison of fine and coarse spectra}

We begin by proving a general statement about topological pressure.  We will make use of the fact that pressure is countably stable~\cite[Theorem 11.2(3)]{yP97}:
\begin{equation}\label{eqn:ctblstab}
P_{\bigcup_N Z_N} (\phi) = \sup_N P_{Z_N}(\phi)
\end{equation}
for all $Z_N \subset X$, $\phi\in C(X)$.  Using this, one may easily show that if $Z = \bigcup_N \bigcap_{n\geq N} Z_n$, then
\begin{equation}\label{eqn:PCP}
P_Z(\phi) \leq \lP_{(Z_n)}(\phi)
\end{equation}
for all $\phi\in C(X)$, which generalises the second inequality in~\eqref{eqn:halfVP}.  Indeed, thanks to~\eqref{eqn:ctblstab}, it  suffices to observe that
\[
P_{\bigcap_{n\geq N} Z_n}(\phi) \leq \lP_{\bigcap_{n\geq N} Z_n}(\phi) \leq \lP_{(Z_n)}(\phi),
\]
where the first inequality follows directly from~\eqref{eqn:halfVP}, and the second follows upon observing that $\Lambda_n(Z_n,\phi,\delta) \geq \Lambda_n(\bigcap_{k\geq N} Z_k,\phi,\delta)$ for every $n\geq N$.  This establishes~\eqref{eqn:PCP}.

Turning now to the relationship between the fine and coarse spectra, we observe that given $A\subset \RR^d$, the level set $\KA$ is related to the approximate level sets $G_n(U)$ by
\[
\KA = \bigcap_{U\supset A} \bigcup_{N\in\NN} \bigcap_{n\geq N} G_n(U),
\]
where the first intersection is taken over all open sets $U$ containing $A$.  Writing $K(U) = \bigcup_{N\in \NN} \bigcap_{n\geq N} G_n(U)$ for each such $U$, it follows from~\eqref{eqn:PCP} that
\[
P_{K(U)}(\xi) \leq \lP_{(G_n(U))}(\xi), 
\]
and taking an infimum over all such $U$ gives $\FFF(A,\xi) \leq \lF(A,\xi)$.

\subsubsection{Comparison of coarse and predicted spectra}

Now we prove the final inequality in~\eqref{eqn:Fleq}.  By Proposition~\ref{prop:cptstable}, it suffices to show that $\uF(\alpha,\xi) \leq \SSS(\alpha,\xi)$ for all $\alpha\in \RR^d$ and $\xi\in C(X)$.

Fix $\alpha\in \RR^d$ and $\gamma>0$.  Let $U=B(x,\gamma)$; thus for every $x\in G_n(U)$ and $1\leq i\leq d$, we have
\begin{equation}\label{eqn:Snphapsi}
|S_n (\ph_i - \alpha_i \psi_i)(x)| = \left| \frac{S_n\ph_i(x)}{S_n\psi_i(x)} - \alpha_i \right| \cdot |S_n \psi_i(x)| \leq \gamma n \|\Psi\|,
\end{equation}
where we write
\[
\|\Psi\| = \max_{1\leq i\leq d} \max_{x\in X} |\psi_i(x)|.
\]
For every $t < \uF(\alpha,\xi)$ and $\eps>0$ there exists $\delta>0$ and a sequence $n_k\to \infty$ such that $\Lambda_{n_k}(G_{n_k}(U),\xi,\delta) \geq e^{n_k t}$ for all $k$ and furthermore, for every $x,y\in X$ with $d(x,y)<\delta$, we have $|\phi(x)-\phi(y)| < \eps$ for $\phi = \ph_i, \psi_i, \xi$.

Given $q,\alpha \in \RR^d$, we observe that if $y\in B(x,n,\delta)$, then
\begin{align*}
|S_n(\langle q,&\Phi -\alpha*\Psi\rangle)(x) - S_n(\langle q,\Phi -\alpha*\Psi\rangle)(y)| \\
&\leq \sum_{i=1}^d |q_i| \cdot |S_n\ph_i(x) - S_n\ph_i(y)| + |q_i \alpha_i| \cdot |S_n \psi_i(x) - S_n\psi_i(y)| \\
&\leq (\|q\|_1 + \|\alpha*q\|_1) n\eps.
\end{align*}
Thus if $E\subset X$ is an $(n_k,\delta)$-spanning set for $X$ and
\[
E' = \{x\in E \mid B(x,n_k,\delta) \cap G_{n_k}(U) \neq  \emptyset \},
\]
then for each $x\in E'$ we can choose $y=y(x) \in B(x,n_k,\delta)\cap G_{n_k}(U)$, obtaining
\begin{equation}\label{eqn:EE'}
\sum_{x\in E} e^{S_{n_k}(\langle q, \Phi - \alpha*\Psi\rangle + \xi)(x)} \geq
\sum_{x\in E'} e^{S_{n_k}(\langle q, \Phi - \alpha*\Psi\rangle)(y(x)) + S_{n_k}\xi(x) - r n_k\eps}
\end{equation}
for every $\alpha\in \RR^d$, where $r = \|q\|_1 + \|\alpha*q\|_1$.  Furthermore, for each $y\in G_n(U)$, it follows from~\eqref{eqn:Snphapsi} that
\[
|S_n(\langle q,\Phi-\alpha*\Psi\rangle)(y)| \leq \|q\|_1 \gamma n \|\Psi\|,
\]
which yields
\begin{align*}
\sum_{x\in E'} e^{S_{n_k}(\langle q, \Phi - \alpha*\Psi\rangle)(y(x)) + S_{n_k}\xi(x)} &\geq 
\sum_{x\in E'} e^{S_{n_k}\xi(x)} e^{-\|q\|_1\gamma n_k\|\Psi\|}  \\
&\geq \Lambda_{n_k}(G_n(U),\xi,\delta) e^{-\|q\|_1\gamma n_k\|\Psi\|}.
\end{align*}
Together with~\eqref{eqn:EE'} and our choice of $\delta$ and $n_k$, this shows that
\[
\Lambda_{n_k}(X, \langle q, \Phi - \alpha*\Psi\rangle + \xi,\delta) \geq e^{n_k t} e^{-\|q\|_1 \gamma n_k \|\Psi\| - r n_k \eps},
\]
which in turn implies
\[
P(\langle q, \Phi - \alpha*\Psi\rangle + \xi) \geq t - \|q\|_1 \gamma \|\Psi\| - r\eps.
\]
Because $t<\uF(\alpha,\xi)$, $\gamma>0$, and $\eps>0$ were arbitrary, this implies that
\[
P(\langle q, \Phi - \alpha*\Psi\rangle + \xi) \geq \uF(\alpha,\xi),
\]
and since this holds for every $q\in \RR^d$, we obtain $\SSS(\alpha,\xi) \geq \uF(\alpha,\xi)$.

\subsection{The key tool}

Fix $\Xi=(\xi_1,\dots,\xi_d)\in C(X)^d$ and $\xi_0\in C(X)$ and consider the function $q\mapsto P(\langle q,\Xi\rangle + \xi_0)$.  Ruelle's formula for the derivative of pressure tells us that if this function is differentiable at $\bar{q}$, and if in addition $\mu_{\bar{q}}$ is an equilibrium state for the potential $\langle \bar{q},\Xi\rangle + \xi_0$, then
\[
\int \Xi\,d\mu_{\bar{q}} = \nabla_q P(\langle q,\Xi\rangle + \xi_0)|_{q=\bar{q}}.
\]
In particular, if $\bar{q}$ is the value of $q$ at which the pressure function attains its minimum on the affine subspace $\xi_0 + \spn\{\xi_1,\dots,\xi_d\}$, then we have
\[
\int \Xi\,d\mu_{\bar{q}} = 0.
\]
For Proposition~\ref{prop:cvp} and Theorem~\ref{thm:main2}, we do not have differentiability of the pressure function at every potential in which we are interested, and so we do not use this result directly.  Rather, we use the following result, which works without differentiability of the pressure function or existence of equilibrium states, and gives conditions under which the supremum in the variational principle for $P(\langle \bar q,\Xi\rangle + \xi_0)$ can be restricted to measures with $\int \Xi\,d\mu = 0$.

\begin{proposition}\label{prop:M'cvp}
Let $\Xi = (\xi_1,\dots,\xi_d)\in C(X)^d$ and $\eps>0$ be such that $\overline{B(0,\eps)} \subset \{ \int \Xi\,d\mu \mid \mu\in\Mf(X)\} \subset \RR^d$, and fix $\xi_0\in C(X)$.  Fix $R > \frac 1\eps(P(\xi_0) + \|\xi_0\|)$.

Suppose $\MMM' \subset \Mf(X)$ has the property that for every $\delta>0$ and $q\in B(0,R) \subset \RR^d$ there exists a measure $\nu_q^\delta\in \MMM'$ such that
\begin{enumerate}
\item for each fixed $\delta$, the map $q\mapsto \nu_q^\delta$ is continuous;
\item $h_{\nu_q^{\delta}}(f) + \int (\langle q,\Xi\rangle + \xi_0) \,d\nu_q^{\delta} \geq P(\langle q,\Xi\rangle + \xi_0) - \delta$ for all $q\in B(0,R)$ and $\delta>0$.
\end{enumerate}
Then there exists $\bar q\in B(0,R)$ such that
\begin{equation}\label{eqn:M'cvp}
\begin{aligned}
P(\langle \bar q,\Xi\rangle + \xi_0) &= \inf_{q\in \RR^d} P(\langle q,\Xi\rangle + \xi_0) \\
&= \sup \left\{ h_\mu(f) + \int \xi_0 \,d\mu \Mid \mu \in \MMM', \int \Xi\,d\mu = 0 \right\}.
\end{aligned}
\end{equation}
\end{proposition}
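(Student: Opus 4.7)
My plan is to establish the existence of $\bar q$, prove the $\leq$ direction of the supremum equality directly, and then produce a matching lower bound via a topological/degree argument applied to a continuous family of barycentres. First, for any $q \in \RR^d$ with $\|q\| = R$, the hypothesis $\overline{B(0,\eps)} \subset \{\int \Xi\,d\mu \mid \mu \in \Mf(X)\}$ lets me choose $\mu \in \Mf(X)$ with $\int \Xi\,d\mu = \eps q/\|q\|$; the variational principle then gives
\[
P(\langle q,\Xi\rangle + \xi_0) \geq h_\mu(f) + \langle q,\tfrac{\eps q}{\|q\|}\rangle + \int \xi_0\,d\mu \geq \eps R - \|\xi_0\| > P(\xi_0),
\]
using the choice of $R$. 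Since $P(\xi_0) = P(\langle 0,\Xi\rangle + \xi_0)$ is strictly below the value of $P$ on $\partial B(0,R)$ and outside, the continuous function $q\mapsto P(\langle q,\Xi\rangle + \xi_0)$ attains its global infimum on $\RR^d$ at some $\bar q\in B(0,R)$, by compactness of $\overline{B(0,R)}$. For the $\leq$ direction of the displayed supremum, any $\mu \in \MMM'$ with $\int \Xi\,d\mu = 0$ and any $q\in\RR^d$ satisfy $h_\mu(f) + \int \xi_0\,d\mu = h_\mu(f) + \int(\langle q,\Xi\rangle + \xi_0)\,d\mu \leq P(\langle q,\Xi\rangle + \xi_0)$, so taking the infimum over $q$ gives $h_\mu(f) + \int \xi_0\,d\mu \leq P(\langle \bar q,\Xi\rangle + \xi_0)$.

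The heart of the argument is the matching lower bound, which I would obtain by producing, for each small $\delta>0$, a parameter $q_\delta \in B(0,R)$ with $\int \Xi\,d\nu_{q_\delta}^\delta = 0$. To this end, define $F_\delta\colon \overline{B(0,R)} \to \RR^d$ by $F_\delta(q) = \int \Xi\,d\nu_q^\delta$; continuity of $F_\delta$ follows from hypothesis (1) together with weak-$*$ continuity of integration against the continuous functions $\xi_i$. Subtracting the trivial variational bound $h_{\nu_q^\delta}(f) + \int \xi_0\,d\nu_q^\delta \leq P(\xi_0)$ from hypothesis (2) yields
\[
\langle q, F_\delta(q)\rangle \geq P(\langle q,\Xi\rangle + \xi_0) - P(\xi_0) - \delta,
\]
and combined with the estimate from the first paragraph this shows $\langle q, F_\delta(q)\rangle \geq \eps R - \|\xi_0\| - P(\xi_0) - \delta > 0$ on $\partial B(0,R)$ for all sufficiently small $\delta$. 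Thus the continuous vector field $-F_\delta$ points strictly inward on the boundary of the ball, and the standard Brouwer degree computation (the homotopy $V_s(q) = (1-s)(-q) + s(-F_\delta(q))$ is non-vanishing on $\partial B(0,R)$, so $-F_\delta$ has the same non-zero degree as $-\id$) supplies an interior zero $q_\delta$ of $F_\delta$. Setting $\mu_\delta := \nu_{q_\delta}^\delta \in \MMM'$, hypothesis (2) becomes
\[
h_{\mu_\delta}(f) + \int \xi_0\,d\mu_\delta = h_{\mu_\delta}(f) + \int(\langle q_\delta,\Xi\rangle + \xi_0)\,d\mu_\delta \geq P(\langle q_\delta,\Xi\rangle + \xi_0) - \delta \geq P(\langle \bar q,\Xi\rangle + \xi_0) - \delta,
\]
and letting $\delta\to 0$ yields the required matching lower bound.

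The main obstacle is the topological step itself: I need the continuous family $q\mapsto \nu_q^\delta$ to have barycentric map $F_\delta$ forced to vanish somewhere in $B(0,R)$, which relies crucially on both the continuity provided by hypothesis (1) (this is precisely what makes the detour through the dense subspace $D$ in the proof of Theorem \ref{thm:main2} necessary, since merely picking ergodic near-optimizers pointwise in $q$ would not give a continuous field) and on the quantitative outward-pointing estimate on $\partial B(0,R)$. The choice $R > (P(\xi_0) + \|\xi_0\|)/\eps$ is calibrated precisely so that the margin $\eps R - \|\xi_0\| - P(\xi_0)$ is positive and can absorb the approximation error $\delta$.
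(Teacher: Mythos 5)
Your proof is correct and follows essentially the same route as the paper's: bound the pressure from below on $\partial B(0,R)$ using a measure with $\int\Xi\,d\mu = \eps q/\|q\|$, then use a Brouwer degree/homotopy argument on the continuous barycentre field $q\mapsto\int\Xi\,d\nu_q^\delta$ to locate a zero in the interior, and let $\delta\to 0$. The only cosmetic difference is that you derive the boundary estimate $\langle q, F_\delta(q)\rangle > 0$ by subtracting the trivial bound $h_{\nu_q^\delta}+\int\xi_0\,d\nu_q^\delta \leq P(\xi_0)$ (comparing with $q=0$), whereas the paper's Lemma~\ref{lem:intzero} compares with the minimizer $\bar q$ and shows $\langle q-\bar q, L(q)\rangle>0$; the two are minor variants of the same degree argument.
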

\begin{proof}
We start by obtaining a uniform bound on where the infimum of the pressure function is achieved.

\begin{lemma}\label{lem:minq}
Let $\Xi = (\xi_1,\dots,\xi_d)\in C(X)^d$ and $\eps>0$ be such that $\overline{B(0,\eps)} \subset \{ \int \Xi\,d\mu \mid \mu\in\Mf(X)\}$, and let $\xi_0\in C(X)$.  Write $P_0 := \inf_{q\in \RR^d} P(\langle q,\Xi\rangle + \xi_0)$.  Then 
\begin{enumerate}
\item for every $\|q\| > \frac 1\eps(P(\xi_0) + \|\xi_0\|)$, we have $P(\langle q,\Xi\rangle + \xi_0) > P_0$;
\item there exists $\|\bar{q}\| \leq \frac 1\eps(P(\xi_0) + \|\xi_0\|)$ such that $P(\langle \bar{q},\Xi\rangle + \xi_0) = P_0$.
\end{enumerate}
\end{lemma}
\begin{proof}
Given $\beta \in \overline{B(0,\eps)}$, there exists $\mu_{\beta} \in \Mf(X)$ such that $\int \Xi\,d\mu_{\beta} = \beta$.  In particular, we have
\[
P(\langle q,\Xi \rangle + \xi_0) \geq h_{\mu_{\beta}}(f) + \langle q,\beta \rangle + \int \xi_0\,d\mu_{\beta}.
\]
Now given $q\in \RR^d$, let $\beta = \eps q / \|q\|$.  Then we have
\[
P(\langle q,\Xi \rangle + \xi_0) \geq h_{\mu_{\beta}}(f) + \left\langle q,\eps \frac{q}{\|q\|} \right\rangle + \int \xi_0 \,d\mu_{\beta} 
\geq \|q\|\eps - \|\xi_0\|.
\]
If $\|q\| > \frac 1\eps (P(\xi_0) + \|\xi_0\|)$, then this yields
\[
P(\langle q, \Xi \rangle + \xi_0) > P(\xi_0) \geq P_0,
\]
which suffices since $\overline{B(0,\frac 1\eps (P(\xi_0) + \|\xi_0\|))}$ is compact.
\end{proof}

Now we show that the shape of the pressure function can be used to obtain a measure with a specified integral.

\begin{lemma}\label{lem:intzero}
Fix $\Xi\in C(X)^d$ and $\xi_0\in C(X)$.  Suppose there exists a region $U\subset \RR^d$ homeomorphic to a ball and a map $U\mapsto \Mf(X)$ that associates to each $q\in U$ a measure $\nu_q$ with the following properties:
\begin{enumerate}
\item $\int \Xi\,d\nu_q$ depends continuously on $q$;
\item there exists $\bar{q} \in U$ such that $h_{\nu_q}(f) + \int (\langle q,\Xi\rangle + \xi_0) \,d\nu_q > P(\langle \bar{q},\Xi\rangle + \xi_0)$ for every $q\in \di U$.
\end{enumerate}
Then there exists $q\in U$ such that $\int \Xi\,d\nu_q = 0$.
\end{lemma}
\begin{proof}
Define a continuous map $L\colon U\to \RR^d$ by $L(q) = \int \Xi\,d\nu_q$.  By the second condition above, we have for every $q\in \di U$ that
\[
h_{\nu_q}(f) + \int (\langle q,\Xi\rangle + \xi_0)\,d\nu_q > P(\langle \bar{q},\Xi\rangle + \xi_0)
\geq h_{\nu_q}(f) + \int (\langle \bar{q},\Xi\rangle + \xi_0)\,d\nu_q,
\]
and so $\langle q-\bar{q}, L(q) \rangle = \int \langle q-\bar{q},\Xi\rangle\,d\nu_q > 0$.  It follows that as maps from $\di U$ to $\RR^d \setminus \{0\}$, the map $L|_{\di U}$ is homotopic to the map $q \mapsto q-\bar{q}$.

We assumed that $U$ is homeomorphic to a ball in $\RR^d$, so let $\pi\colon B(0,1) \to U$ realise this homeomorphism, and define a map $\hat L\colon B(0,1) \to \RR^d$ by $\hat L = L \circ \pi$.   By the above argument, $\hat L|_{\di B(0,1)}$ is homotopic to the identity map through maps into $\RR^d \setminus \{0\}$, and it follows that $0 \in \inter L(U)$.
\end{proof}

Now we can complete the proof of Proposition~\ref{prop:M'cvp}.  It follows from Lemma~\ref{lem:minq} that there exists $\bar q\in \RR^d$ such that $\|\bar q\| < R$ and
\[
P(\langle \bar q,\Xi\rangle + \xi_0) = \inf_{q\in \RR^d} P(\langle q,\Xi\rangle + \xi_0) < \inf_{q\notin B(0,R)}
P(\langle q,\Xi\rangle + \xi_0).
\]
This establishes the first equality in~\eqref{eqn:M'cvp}.  Choosing $\delta>0$ such that
\[
P(\langle q,\Xi\rangle + \xi_0) > P(\langle \bar q,\Xi\rangle + \xi_0) + \delta
\]
for all $q\in \di B(0,R)$, we apply Lemma~\ref{lem:intzero} to the measures $\nu_q^\delta$ to obtain $q=q(\delta) \in B(0,R)$ such that the measure $\nu_{q(\delta)}^\delta$ satisfies $\int \Xi\,d\nu_{q(\delta)}^\delta = 0$.  In particular, we see that
\begin{multline*}
h_{\nu_{q(\delta)}^\delta}(f) + \int \xi_0 \,d\nu_{q(\delta)}^\delta
= h_{\nu_{q(\delta)}^\delta}(f) + \int (\langle {q(\delta)},\Xi\rangle + \xi_0) \,d\nu_{q(\delta)}^\delta \\
\geq P(\langle q(\delta),\Xi\rangle + \xi_0) -\delta \geq P(\langle \bar q,\Xi\rangle + \xi_0) - \delta.
\end{multline*}
This holds for arbitrarily small $\delta>0$, which establishes the second equality in~\eqref{eqn:M'cvp}.
\end{proof}

\subsection{Proof of Proposition~\ref{prop:cvp}}

The first inequality in~\eqref{eqn:TTS} holds because $\Mf(\Ka) \subset \Mfa(X)$.  For the second inequality, we fix $\mu\in \Mfa(X)$ and observe that $P(\langle q,\Phi - \alpha*\Psi\rangle + \xi) \geq h_\mu(f) + \int \xi\,d\mu$ for every $q\in \RR^d$.

To prove~\eqref{eqn:II}, we use $J(\Phi,\Psi,\alpha) = \{ \int (\Phi-\alpha*\Psi) \,d\mu \mid \mu \in \Mf(X) \} \subset \RR^d$ as in \S\ref{sec:dichotomy}.  By Lemmas~\ref{lem:notinI} and~\ref{lem:inI}, we see that $\alpha\in I'(\Phi,\Psi)$ if and only if $0\in J(\Phi,\Psi,\alpha)$; we show a similar equivalence for $I(\Phi,\Psi)$.

Using the assumption that \QQ\ holds, we see that every $\mu\in \Mf(X)$ with $\int (\Phi - \alpha*\Psi)\,d\mu=0$ has $\int \psi_i \,d\mu \neq 0$ for all $i$, since otherwise we would have $\int \psi_i \,d\mu = \int \ph_i \,d\mu = 0$, contradicting \QQ.  Thus $\int (\Phi - \alpha*\Psi)\,d\mu=0$ is equivalent to $\frac{\int \ph_i\,d\mu}{\int \psi_i\,d\mu} = \alpha_i$ for all $i$.  In particular, we see that $\alpha \in I(\Phi,\Psi)$ if and only if $0\in J(\Phi,\Psi,\alpha)$.

It remains only to prove~\eqref{eqn:TS} and the result on attainment of the infimum.  We begin by observing that these follow from Proposition~\ref{prop:M'cvp} under an a priori more restrictive condition, and then use the hypotheses on $f$ and $\Psi$ to show that this suffices.
Consider the set
\[
I_0(\Phi,\Psi) 
= \{ \alpha\in \RR^d \mid 0 \in \inter J(\Phi,\Psi,\alpha) \}.
\]
Now given $\alpha\in I_0(\Phi,\Psi)$, we can apply Proposition~\ref{prop:M'cvp} with $\Xi = \Phi - \alpha*\Psi$, $\xi_0 = \xi$, and $\MMM' = \Mf(X)$; then~\eqref{eqn:TS} follows from~\eqref{eqn:M'cvp}.  The statement on attainment of the infimum and the inequality for large $\|q\|$ follow from Lemma~\ref{lem:minq}.

This establishes the result of Proposition~\ref{prop:cvp} for $\alpha \in I_0(\Phi,\Psi)$.  In fact, the conditions of the proposition guarantee that $I_0(\Phi,\Psi) = \inter I(\Phi,\Psi)$, as we now show.

\begin{lemma}\label{lem:IhatI}
If $\Phi,\Psi$ satisfy \QQ, then $I_0(\Phi,\Psi) = \inter I(\Phi,\Psi)$.
\end{lemma}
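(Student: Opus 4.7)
The plan is to prove the two inclusions $I_0(\Phi,\Psi)\subset \inter I(\Phi,\Psi)$ and $\inter I(\Phi,\Psi)\subset I_0(\Phi,\Psi)$ separately.

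For the first inclusion I would use a convexity-plus-continuity argument. Given $\alpha\in I_0(\Phi,\Psi)$, fix $r>0$ with $\overline{B(0,r)}\subset J(\Phi,\Psi,\alpha)$, and for each $1\le i\le d$ and each sign choose $\mu_i^\pm\in\Mf(X)$ with $\int(\Phi-\alpha*\Psi)\,d\mu_i^\pm = \pm r e_i$. For any perturbation $\alpha' = \alpha + \delta$, these same measures yield
$$\int(\Phi - \alpha'*\Psi)\,d\mu_i^\pm = \pm r e_i - \left(\delta_j \int\psi_j\,d\mu_i^\pm\right)_{j=1}^d,$$
which differs from $\pm r e_i$ by at most $\|\delta\|\,\|\Psi\|\sqrt d$. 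Since $0$ lies in the interior of the cross-polytope $\mathrm{conv}\{\pm r e_i\}$, a standard continuity-of-convex-hulls argument shows that $0$ remains in the convex hull of the $2d$ perturbed points once $\|\delta\|$ is small enough. Convexity of $J(\Phi,\Psi,\alpha')$ then gives $0\in J(\Phi,\Psi,\alpha')$, i.e.\ $\alpha'\in I(\Phi,\Psi)$, establishing that $\alpha$ lies in the interior.

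For the second inclusion I would argue by contradiction using the supporting hyperplane theorem coupled with \QQ. Suppose $\alpha\in\inter I(\Phi,\Psi)$ but $0\notin\inter J(\Phi,\Psi,\alpha)$. Since $J(\Phi,\Psi,\alpha)$ is closed, convex, and contains $0$ (because $\alpha\in I$), we have $0\in\di J(\Phi,\Psi,\alpha)$, so there exists $q\in\RR^d\setminus\{0\}$ with
$$\sum_{i=1}^d q_i \left(\int \ph_i\,d\mu - \alpha_i \int \psi_i\,d\mu\right) \geq 0 \quad \text{for every } \mu\in\Mf(X).$$
Because $\alpha\in\inter I$, for $t>0$ sufficiently small we have $\alpha - tq \in I(\Phi,\Psi)$, so there is $\mu'\in\Mf(X)$ with $\int\ph_i\,d\mu' = (\alpha_i - tq_i)\int\psi_i\,d\mu'$ for all $i$. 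Substituting into the preceding inequality,
$$0 \leq \sum_{i=1}^d q_i\left(\int\ph_i\,d\mu' - \alpha_i \int\psi_i\,d\mu'\right) = -t\sum_{i=1}^d q_i^2 \int\psi_i\,d\mu'.$$
Each summand $q_i^2 \int\psi_i\,d\mu'$ is non-negative by \QQ, and their sum is non-positive, so every summand vanishes; in particular, whenever $q_i\neq 0$ we obtain $\int\psi_i\,d\mu' = 0$, which forces $\int\ph_i\,d\mu' = (\alpha_i - tq_i)\cdot 0 = 0$ as well. But then the second clause of \QQ\ demands $\int\psi_i\,d\mu' > 0$, a contradiction. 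Hence $q = 0$, contradicting our choice.

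The main obstacle is organising the second direction correctly: passing from the geometric hypothesis $0\notin\inter J(\Phi,\Psi,\alpha)$ to an analytic contradiction requires one to couple the separating functional $q$ with \QQ\ in just the right way, which is why it is essential to perturb $\alpha$ along $-q$ rather than $+q$—this forces the separating functional to take a non-positive value on the witnessing measure, after which the two clauses of \QQ\ do the rest. The first direction, by contrast, is soft and uses only compactness, convexity, and continuity.
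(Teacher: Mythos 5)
Your proof is correct in both directions, and it diverges from the paper's argument in a way worth noting. In the inclusion $I_0\subset\inter I$, the paper invokes a degree-theoretic/homotopy argument (the map $L$ from Lemma~\ref{lem:intzero}, plus Lemma~\ref{lem:minq} to get $I_0\subset I$), whereas you replace this with a finite cross-polytope construction: pick $2d$ measures realizing $\pm r e_i$, observe that perturbing $\alpha$ moves each image point by at most $O(\|\delta\|)$, and conclude by stability of the interior of a convex hull. Your version is more elementary — it sidesteps the need for a continuous selection $\beta\mapsto\mu_\beta$ that the homotopy argument implicitly requires — and it simultaneously delivers $I_0\subset I$ for free (take $\delta=0$), so Lemma~\ref{lem:minq} is not needed here. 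For $\inter I\subset I_0$, both you and the paper perturb $\alpha$ in the direction $-q$ of the separating functional and then use \QQ; the paper frames this as a constructive contrapositive (produce $\alpha'=\alpha-\eps q$ with $0\notin J(\Phi,\Psi,\alpha')$, hence $\alpha'\notin I$, hence $\alpha\notin\inter I$), while you frame it as a contradiction anchored at a specific witnessing measure $\mu'$ for $\alpha-tq\in I$. These are logically equivalent and use the same essential insight. One small wording issue: your final line ``Hence $q=0$, contradicting our choice'' misstates where the contradiction lands — you have already derived $\int\psi_i\,d\mu'=0$ together with $\int\ph_i\,d\mu'=0$, which directly violates \QQ; there is no need (and it is not logically warranted) to conclude $q=0$. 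Alternatively, since $\mu'$ witnesses membership in $I(\Phi,\Psi)$, the quantities $\int\psi_i\,d\mu'$ are automatically nonzero and hence (by \QQ) strictly positive, so $\sum q_i^2\int\psi_i\,d\mu'>0$ gives an even quicker contradiction with the displayed inequality.
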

\begin{proof}
First we show that $I_0(\Phi,\Psi)$ is open.  Suppose $\alpha\in I_0(\Phi,\Psi)$; then $\overline{B(0,\eps)} \subset J(\Phi,\Psi,\alpha)$ for some $\eps>0$.  Let $\alpha'\in \RR^d$ be such that $\|\alpha*\Psi - \alpha'*\Psi\| < \eps/2$; we use a homotopy argument as in Lemma~\ref{lem:intzero} to show that $\alpha'\in I_0(\Phi,\Psi)$.

Given $\beta\in \overline{B(0,\eps)}$, let $\mu_\beta \in \Mf(X)$ be such that $\int (\Phi - \alpha*\Psi)\,d\mu_\beta = \beta$, and define a map $L\colon \overline{B(0,\eps)} \to \RR^d$ by
\[
L(\beta) = \int (\Phi - \alpha'*\Psi)\,d\mu_\beta.
\]
Then $\|L(\beta) - \beta\| < \eps/2$, and as in Lemma~\ref{lem:intzero}, we see that $L\colon \di B(0,\eps) \to \RR^d \setminus \{0\}$ is homotopic to the identity map through maps into $\RR^d \setminus \{0\}$.  Consequently, we have $0\in \inter L(B(0,\eps)) \subset \inter J(\Phi,\Psi,\alpha')$, and so $\alpha'\in I_0(\Phi,\Psi)$.

This shows that $I_0(\Phi,\Psi)$ is open.  Furthermore, Lemma~\ref{lem:minq} shows that $\SSS(\alpha,0) > -\infty$ whenever $\alpha\in I_0(\Phi,\Psi)$, and so $I_0(\Phi,\Psi) \subset I(\Phi,\Psi)$.  It follows from openness that $I_0(\Phi,\Psi) \subset \inter I(\Phi,\Psi)$.

For the other direction, we fix $\alpha\notin I_0(\Phi,\Psi)$ and show that $\alpha\notin \inter I(\Phi,\Psi)$ by producing $\alpha'\in \RR^d$ arbitrarily close to $\alpha$ such that $0\notin J(\Phi,\Psi,\alpha')$, at which point Lemma~\ref{lem:notinI} does the rest.

To produce $\alpha'$, we observe that since $0$ is not in the interior of the convex set $J(\Phi,\Psi,\alpha)$, there exists $q\in \RR^d$ such that $\langle q,\int (\Phi - \alpha*\Psi)\,d\mu\rangle \geq 0$ for every $\mu\in \Mf(X)$.  Fix $\eps>0$ and consider $\alpha' = \alpha - \eps q$.  Given $\mu \in \Mf(X)$, one of the following two things happens.

\emph{Case 1.}  There exists $i$ such that $q_i \neq 0$ and $\int \psi_i \,d\mu > 0$.  In this case
\begin{align*}
\left\langle q, \int (\Phi - \alpha' * \Psi)\,d\mu \right\rangle
&= \left\langle q, \int (\Phi - \alpha*\Psi)\,d\mu \right\rangle + \left\langle q,\eps \int q*\Psi\,d\mu \right\rangle \\
&\geq \eps \sum_{j=1}^d q_j^2 \int \psi_j\,d\mu 
\geq \eps q_i^2 \int \psi_i\,d\mu > 0,
\end{align*}
therefore $\int (\Phi - \alpha'*\Psi)\,d\mu \neq 0$.

\emph{Case 2.}  Every $i$ with $q_i\neq 0$ has $\int \psi_i\,d\mu = 0$, so \QQ\ gives $\int \ph_i\,d\mu \neq 0$.  Choose such an $i$; then
\[
\int (\ph_i - \alpha_i' \psi_i)\,d\mu = \int \ph_i\,d\mu \neq 0,
\]
and so once again $\int (\Phi - \alpha'*\Psi)\,d\mu \neq 0$.  This shows that $0\notin J(\Phi,\Psi,\alpha')$, so Lemma~\ref{lem:notinI} implies $\alpha'\notin I(\Phi,\Psi)$.  Since $\eps>0$ was arbitrary, $\alpha'$ can be chosen arbitrarily close to $\alpha$, and we obtain $\alpha\notin \inter I(\Phi,\Psi)$.
\end{proof}

\subsection{Proof of Theorem~\ref{thm:main2}}

First we consider $\alpha\in \inter I(\Phi,\Psi)$.  Using Lemma~\ref{lem:IhatI}, we have $\alpha \in I_0(\Phi,\Psi)$, and so we will be able to apply Proposition~\ref{prop:M'cvp} with $\Xi = \Phi - \alpha*\Psi$ and $\xi_0 = \xi$.  For the collection $\MMM'$, we take
\[
\MMM' = \{ \mu(q,\tilde\Phi,\tilde\Psi,\tilde\xi) \mid q\in \RR^d, \tilde\Phi,\tilde\Psi\in D^d, \tilde\xi\in D \},
\]
where $\mu(q,\tilde\Phi,\tilde\Psi,\tilde\xi)$ is the unique equilibrium state for $\langle q, \tilde\Phi - \alpha*\tilde\Psi\rangle + \tilde\xi$.  Observe that $\int (\Phi - \alpha*\Psi)\,d\mu(q,\tilde\Phi,\tilde\Psi,\tilde\xi)$ depends continuously on $q$.

Now we must show that the choice of $\MMM'$ satisfies the hypotheses of Proposition~\ref{prop:M'cvp}.  Let $\eps>0$ be such that $\overline{B(0,\eps)} \subset J(\Phi,\Psi,\alpha)$, and fix $R > \frac 1\eps (P(\xi) + \|\xi\|)$.  Given $\delta>0$, let $\eta>0$ be such that $2(R+1)\eta < \delta$, and fix $\tilde\Phi,\tilde\Psi,\tilde\xi$ such that
\[
\|(\tilde\Phi - \alpha*\tilde\Psi) - (\Phi - \alpha*\Psi)\| < \eta, \qquad \|\tilde\xi - \xi\| < \eta.
\]
Let $\nu_q^\delta = \mu(q,\tilde\Phi,\tilde\Psi,\tilde\xi)$.  Then we have
\begin{align*}
h_{\nu_q^\delta}(f) + \int &(\langle q,\Phi - \alpha*\Psi\rangle + \xi)\,d\nu_q^\delta \\
&\geq h_{\nu_q^\delta}(f) + \int (\langle q,\tilde\Phi - \alpha*\tilde\Psi\rangle + \tilde\xi)\,d\nu_q^\delta - (\|q\|+1)\eta \\
&= P(\langle q,\tilde\Phi - \alpha*\tilde\Psi\rangle + \tilde\xi) - (R+1)\eta \\
&\geq P(\langle q,\Phi - \alpha*\Psi\rangle + \xi) - 2(R+1)\eta,
\end{align*}
and since $2(R+1)\eta < \delta$, this shows that $\MMM'$ satisfies the hypotheses of Proposition~\ref{prop:M'cvp}.  Thus we have
\begin{equation}\label{eqn:Sa}
\SSS(\alpha,\xi) = \sup \left\{ h_\mu(f) + \int \xi\,d\mu \Mid \mu\in \MMM', \int (\Phi - \alpha*\Psi)\,d\mu = 0 \right\}.
\end{equation}
However, every $\mu\in\MMM'$ is ergodic, so $\mu(G_\mu) = 1$, where $G_\mu$ is the set of generic points for $\mu$.  For every $x\in G_\mu$, we have $\frac 1n S_n \ph_i(x) \to \int \ph_i\,d\mu$ and $\frac 1n S_n \psi_i(x) \to \int \psi_i\,d\mu$ for all $i$.  At least one of these limits must be non-zero by \QQ, and the condition $\int (\ph_i - \alpha \psi_i)\,d\mu = 0$ implies that $\int \psi_i\,d\mu \neq 0$.  This in turn shows that $G_\mu \subset \Ka$, whence $\mu\in \Mf(\Ka)$.  Thus~\eqref{eqn:Sa} gives
\[
\SSS(\alpha,\xi) = \sup \left\{ h_\mu(f) + \int \xi\,d\mu \Mid \mu\in \Mf(\Ka) \right\} = \TTT(\alpha,\xi),
\]
which suffices to establish equality in~\eqref{eqn:Fleq} and~\eqref{eqn:TTS}.

\subsection{Proof of results on continuity of the spectrum}

\begin{proof}[Proof of Proposition~\ref{prop:ctsonint}]
Fix $\alpha\in \RR^d$ and $\xi\in C(X)$.  Given $t>\SSS(\alpha,\xi)$, there exists $q\in \RR^d$ such that $P(\langle q,\Phi-\alpha*\Psi\rangle + \xi) < t$; by continuity of pressure, this inequality remains true if we perturb $\alpha$ slightly, and thus $\SSS(\alpha',\xi) < t$ for all $\alpha'$ sufficiently close to $\alpha$.  This implies upper semi-continuity.

For continuity on $\inter I(\Phi,\Psi)$, we use the last statement in Proposition~\ref{prop:cvp}.  Let $R>0$ and $q_0\in \RR^d$ be such that
\begin{enumerate}
\item $\|q_0\| < R$,
\item $P(\langle q_0,\Phi-\alpha*\Psi\rangle + \xi) = \SSS(\alpha,\xi)$, and
\item $P(\langle q,\Phi-\alpha*\Psi\rangle + \xi) > \SSS(\alpha,\xi)$ for all $\|q\| \geq R$.
\end{enumerate}
Consider the quantity
\[
\gamma = \frac{\inf_{\|q\|=R} P(\langle q,\Phi-\alpha*\Psi\rangle + \xi) - \SSS(\alpha,\xi)}{R - \|q_0\|} > 0.
\]
By convexity of pressure, the properties listed above imply that
\begin{equation}\label{eqn:linincrease}
P(\langle q,\Phi-\alpha*\Psi\rangle + \xi) \geq \gamma(\|q\| - R) + \SSS(\alpha,\xi)
\end{equation}
for all $q\in \RR^d$.  Now fix $t < \SSS(\alpha,\xi)$ and let $\eta>0$ be such that $\eta\|\Psi\| < \gamma$.  We deduce from~\eqref{eqn:linincrease} that whenever $\|\alpha' - \alpha\| < \eta$, we have
\begin{align*}
P(\langle q,\Phi - \alpha'*\Psi\rangle + \xi) &\geq P(\langle q,\Phi-\alpha*\Psi\rangle + \xi) - \|q\| \eta \|\Psi\| \\
&\geq (\gamma - \eta\|\psi\|) \|q\| - \gamma R + \SSS(\alpha,\xi)
\end{align*}
for all $q\in \RR^d$.  In particular, there exists $R'>0$ such that
\begin{equation}\label{eqn:Pqgeqt}
P(\langle q,\Phi - \alpha'*\Psi\rangle + \xi) \geq t
\end{equation}
for all $\|q\|\geq R'$ and $\|\alpha' - \alpha\| \leq \eta$.  By continuity of pressure and compactness of $B(0,R)$, there exists $\eta'>0$ such that if $\|\alpha' - \alpha\| < \eta'$, then~\eqref{eqn:Pqgeqt} holds for all $\|q\| \leq R'$ as well.  This implies that $\alpha\mapsto \SSS(\alpha,\xi)$ is lower semi-continuous at $\alpha$; together with the first part of the proposition, this implies continuity.
\end{proof}

\begin{proof}[Proof of Proposition~\ref{prop:isconcave}]
Without loss of generality, assume that there is $m\leq d$ such that $J=\{1,\dots,m\}$.  Let $\hat\Phi = (\ph_{m+1},\dots,\ph_d)$ and similarly for $\hat\Psi$, $\hat\alpha$, $\hat q$.  Let $\tilde\Phi = (\ph_1,\dots,\ph_m)$ and similarly for $\tilde\alpha,\tilde q$.  Then we have
\begin{equation}\label{eqn:hats}
P(\langle q,\Phi - \alpha'*\Psi\rangle + \xi) = P(\langle \hat q, \hat \Phi - \hat\alpha*\hat\Psi\rangle + \langle \tilde q, \tilde \Phi - \tilde \alpha' \rangle + \xi),
\end{equation}
Given $\tilde q\in \RR^m$, write
\[
T(\tilde q) = \inf_{\hat q\in \RR^{d-m}} P(\langle \hat q, \hat \Phi - \hat\alpha*\hat\Psi\rangle + \langle \tilde q, \tilde \Phi \rangle+ \xi ).
\]
Then~\eqref{eqn:hats} implies that
\[
\SSS(\alpha',\xi) = \inf_{\tilde q\in \RR^m} (T(\tilde q) - \langle \tilde q, \tilde \alpha'\rangle);
\]
in other words, the function $\tilde \alpha' \mapsto \SSS(\alpha',\xi)$ is a Legendre transform, and hence concave.  It follows that the domain of finiteness $\tilde A_J^\alpha$ is convex, and compactness follows from the fact that the functions $\ph_j$ are bounded.

Finally, concave functions are lower semicontinuous where finite, and together with the upper semicontinuity result in Proposition~\ref{prop:ctsonint}, this implies that $\alpha'\mapsto \SSS(\alpha',\xi)$ is continuous on $\tilde A_J^\alpha$.
\end{proof}

\begin{proof}[Proof of Corollary~\ref{cor:intenough}]
Using compactness of $\tilde A_J^\alpha$ and the fact that $A$ is the closure of $\hat A$, we see immediately that $A$ is compact.  Thus Theorem~\ref{thm:main} applies to $A$, and so for the first part of the corollary it suffices to show that $\TTT(A,\xi) = \SSS(A,\xi)$.

Since $A = \overline{A \cap \inter I(\Phi,\Psi)}$, there exist compact sets $\hat A_n \subset \inter I(\Phi,\Psi)$ such that $A = \overline{\bigcup_n \hat A_n}$.

By Proposition~\ref{prop:isconcave}, $\alpha'\mapsto \SSS(\alpha',\xi)$ is continuous on $A$, and hence for every $t < \SSS(A,\xi) = \sup_{\alpha'\in A} \SSS(\alpha',\xi)$, there exists $n$ such that $\SSS(\hat A_n,\xi) > t$.  Applying Theorem~\ref{thm:main2} to $\hat A_n$, we obtain $\TTT(\hat A_n,\xi) > t$.  This in turn implies that $\TTT(A,\xi) > t$, and since $t<\SSS(A,\xi)$ was arbitrary, this proves the first part of the corollary.

The argument just given shows the first equality in~\eqref{eqn:bdrynegligible}, while the second follows from continuity of $\SSS(\alpha',\xi)$, and the third follows from the first two since for every $\alpha'\in \hat A$, we have
\[
P_{K(\alpha')}(\xi) \leq P_{K(\hat A)}(\xi) \leq P_{K(A)}(\xi).\qedhere
\]
\end{proof}

\subsection{Proof of Theorem~\ref{thm:dimu}}

First we observe that 
\begin{equation}\label{eqn:posenough}
P_\Ka(\xi) = P_{\hat K(\alpha)}(\xi)
\end{equation}
for every $\alpha\in \inter I(\Phi,\Psi)$ and $\xi\in C(X)$.

Let $\umin = \inf_x \llim \frac 1n S_n u(x)$; if $\umin > 0$ then $\hat X=X$ and $\hat K(\alpha) = \Ka$, so~\eqref{eqn:posenough} is automatic, while if $\umin=0$ then it follows from Corollary~\ref{cor:intenough}.  To see this, let $\tilde\Phi = (\ph_1,\dots,\ph_d,u)$ and $\tilde\Psi=(\psi_1,\dots,\psi_d,1)$; then $\tilde\Phi$ and $\tilde\Psi$ satisfy \QQ, and writing $U = \max\{\int u\,d\mu \mid \mu\in \Mf(X)\}$, we have 
$I(\tilde\Phi,\tilde\Psi) = I(\Phi,\Psi) \times [0,U]$.  In particular we may set $A=\{\alpha\}\times [0,U]$ and get $\hat A =\{\alpha\} \times (0,U)$.  Write $\tilde K$ for the level sets associated to $(\tilde\Phi,\tilde\Psi)$.  Using~\eqref{eqn:bdrynegligible} gives
\[
P_{K(\alpha)}(\xi) = P_{\tilde K(A)}(\xi) = P_{\tilde K(\hat A)}(\xi) \leq P_{\hat K(\alpha)}(\xi) \leq P_{K(\alpha)}(\xi),
\]
where the inequalities use monotonicity of pressure and the fact that $\tilde K(\hat A) \subset \hat K(\alpha) \subset K(\alpha)$.  This implies~\eqref{eqn:posenough}.


Similarly, writing $\hat X(\gamma) = \{ x\in X \mid \lim \frac 1n S_n u(x) = \gamma\}$ for each $\gamma\geq 0$, Corollary~\ref{cor:intenough} shows that
\begin{equation}\label{eqn:PXhat}
P(\xi) = \sup_{\gamma\geq 0} P_{\hat X(\gamma)}(\xi) = \sup_{\gamma> 0} P_{\hat X(\gamma)}(\xi)
\end{equation}
for every $\xi\in C(X)$.  This follows as in the previous paragraph by taking $\tilde\Phi = (u)$, $\tilde\Psi = (1)$, and $A = [0,U]$.

For the first claim in Theorem~\ref{thm:dimu}, we fix $t$ smaller than the supremum in~\eqref{eqn:dimucvp}, and choose $\mu\in \Mfa(X)$ such that $\frac{h_\mu(f)}{\int u\,d\mu} > t$.  Let $\gamma = \int u\,d\mu > 0$; then by Theorem~\ref{thm:main2},
\[
P_{\hat K(\alpha)}(-tu) \geq P_{K(\alpha) \cap \hat X(\gamma)}(-tu) \geq h_\mu(f) - t\int u\,d\mu > 0.
\]
This lets us apply Proposition~\ref{prop:bowen} and obtain $\dim_u \hat K(\alpha) \geq t$.  By the arbitrariness of $t$, this proves the first claim.

For the second claim, we write $Y(\gamma) = \{x\in X \mid \llim \frac 1n S_n u(x) \geq \gamma \}$ for all $\gamma > 0$.  Fix $t < \inf_q T_u(q)$ and let $\delta>0$ be such that $t+\delta < \inf_q T_u(q)$.  Then we have
\begin{equation}\label{eqn:Pqpos}
P(\langle q,\Phi - \alpha*\Psi\rangle - (t+\delta)u)>0
\end{equation}
for all $q\in \RR^d$.  By Proposition~\ref{prop:cvp}, there exists $R>0$ and $\|q_0\|\leq R$ such that
\[
P(\langle q_0,\Phi - \alpha*\Psi\rangle - (t+\delta)u) = \inf_{q\in\RR^d} P(\langle q,\Phi - \alpha*\Psi\rangle - (t+\delta)u)
\]
and furthermore,
\begin{align*}
P(\langle q,\Phi - \alpha*\Psi\rangle - (t+\delta)u) > P(\langle q_0,\Phi - \alpha*\Psi\rangle - (t+\delta)u)
\end{align*}
whenever $\|q\| > R$.   It follows from~\eqref{eqn:PXhat} that
\begin{equation}\label{eqn:supYgamma}
P(\xi) = \sup_{\gamma>0} P_{Y(\gamma)}(\xi)
\end{equation}
for all $\xi\in C(X)$, since 
$\hat X(\gamma) \subset  Y(\gamma) \subset X$ for each $\gamma>0$.

By choosing a finite $\eps$-dense collection of values of $q$ in $B(0,R)$ and applying~\eqref{eqn:supYgamma} with $\xi = \langle q,\Phi - \alpha*\Psi\rangle - (t+\delta)u$ for each such $q$, we can fix $\gamma>0$ such that
\begin{enumerate}
\item for every $\|q\|\leq R$, we have $P_{Y(\gamma)} (\langle q,\Phi - \alpha*\Psi\rangle - (t+\delta)u) > 0$, and
\item for every $\|q\| = R$, we have
\begin{equation}\label{eqn:PYgamma}
P_{Y(\gamma)} (\langle q,\Phi - \alpha*\Psi\rangle - (t+\delta)u) > P(\langle q_0,\Phi - \alpha*\Psi\rangle - (t+\delta)u).
\end{equation}
\end{enumerate}
By convexity of pressure, the first property implies that~\eqref{eqn:PYgamma} holds for all $\|q\| > R$ as well, which shows that
\begin{equation}\label{eqn:infpos}
\inf_{q\in\RR^d}P_{Y(\gamma)} (\langle q,\Phi - \alpha*\Psi\rangle - (t+\delta)u) > 0.
\end{equation}
Now we use the following property of the pressure function.

\begin{lemma}\label{lem:bound-pressure}
Given $f\colon X\to X$, $\eta, \phi\in C(X)$, and $Z\subset X$, suppose there exist $\alpha, \beta \in \RR$ such that
\[
\alpha \leq \llim_{n\to\infty} \frac 1n S_n\phi(x) \leq \ulim_{n\to\infty} \frac 1n S_n\phi(x) \leq \beta
\]
for every $x\in Z$.  Then
\begin{equation}\label{eqn:cone}
P_Z(\eta) + \alpha t \leq P_Z(\eta + t\phi) \leq P_Z(\eta) + \beta t
\end{equation}
for all $t>0$.
\end{lemma}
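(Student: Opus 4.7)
My plan is to prove both inequalities simultaneously by estimating the sums $\sum_i e^{-n_i s + S_{n_i}(\eta + t\phi)(x_i)}$ that appear in the definition~\eqref{eqn:mZa} of $m_P$, once we can replace $S_{n_i}\phi(x_i)$ with a linear bound $c n_i$. Since the hypotheses only control asymptotic Birkhoff averages, the point $x_i$ must satisfy $\alpha-\eps \leq \frac 1n S_n\phi(x) \leq \beta+\eps$ for \emph{all} $n$ above some threshold, so I will first restrict attention to sets on which the convergence is uniform and then recover $Z$ by countable stability.

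Concretely, fix $\eps>0$ and for each $N\in \NN$ let
\[
Z_N = \left\{ x\in Z \Mid \alpha - \eps \leq \tfrac 1n S_n\phi(x) \leq \beta+\eps \text{ for all } n\geq N \right\}.
\]
The hypothesis on $\llim$ and $\ulim$ gives $Z = \bigcup_N Z_N$, so by countable stability~\eqref{eqn:ctblstab}, $P_Z(\eta + t\phi) = \sup_N P_{Z_N}(\eta+t\phi)$ (and similarly without the $t\phi$). Now any cover $\{(x_i,n_i)\} \in \PPP(Z_N,N',\delta)$ with $N'\geq N$ has $x_i \in Z_N$ and $n_i\geq N$, so
\[
(\alpha-\eps)n_i \leq S_{n_i}\phi(x_i) \leq (\beta+\eps) n_i
\]
for every $i$. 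Multiplying by $t>0$ and summing gives the two-sided comparison
\[
m_P(Z_N,s-t(\alpha-\eps),\eta,\delta) \leq m_P(Z_N,s,\eta+t\phi,\delta) \leq m_P(Z_N,s-t(\beta+\eps),\eta,\delta),
\]
which translates directly into
\[
P_{Z_N}(\eta,\delta) + t(\alpha-\eps) \leq P_{Z_N}(\eta+t\phi,\delta) \leq P_{Z_N}(\eta,\delta) + t(\beta+\eps).
\]

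To finish, I would take $\delta\to 0$ to remove the $\delta$ dependence, then take $\sup_N$ on both sides and invoke countable stability to rewrite both suprema as $P_Z(\eta)$ and $P_Z(\eta+t\phi)$ respectively. Finally, sending $\eps\to 0$ gives~\eqref{eqn:cone}. I do not anticipate a serious obstacle here; the only point that requires a bit of care is ensuring that the cover-by-cover comparison is preserved after taking infima and limits in the definition of $m_P$, which works cleanly because the comparison of $m_P$ values is uniform in the cover and holds for every $s\in\RR$.
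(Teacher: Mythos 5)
Your proof is correct and follows essentially the same route as the paper's: decompose $Z$ into the sets where $\tfrac 1n S_n\phi$ is confined to $(\alpha-\eps,\beta+\eps)$ for all $n\geq N$, compare the outer measures $m_P$ term by term on covers with large enough order, pass through countable stability, and send $\eps\to 0$. The only cosmetic differences are the labelling of the exhausting sets and the order in which you take the $\delta$, $N$, $\eps$ limits, which all commute here.
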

\begin{proof}
When $\eta$ is a multiple of $\phi$, this is \cite[Proposition 5.3]{vC10a}.  The proof for general $\eta$ is essentially the same; we give it here for completeness.

Let $\eps>0$ be arbitrary.  Given $m\geq 1$, let
\[
Z_m = \left\{x\in Z \,\Big|\, \frac 1n S_n\phi(x)\in (\alpha-\eps,\beta+\eps) \text{ for all } n\geq m \right\},
\]
and observe that $Z = \bigcup_{m=1}^\infty Z_m$.  Now fix $t>0$, and $N\geq m$.  It follows from the definition of $Z_m$ that for any $\delta>0$ and $s \in \RR$ we have
\begin{align*}
m_P(Z_m, s, \,&\eta + t\phi, N, \delta) \\ 
&=\inf_{\PPP(Z_m,N,\delta)} \sum_{(x_i,n_i)} \exp(-n_i s + S_{n_i} \eta (x_i) + tS_{n_i}\phi(x_i)) \\
&\geq 
\inf_{\PPP(Z_m,N,\delta)} \sum_{(x_i,n_i)} \exp(-n_i s + S_{n_i} \eta (x_i) + n_i t(\alpha - \eps)) \\
&= m_P(Z_m, s - t(\alpha-\eps), \eta, N, \delta).
\end{align*}
Letting $N\to\infty$, this gives
\[
m_P(Z_m, s, \eta+t\phi, \delta) \geq m_P(Z_m, s -t(\alpha-\eps), \eta, \delta);
\]
in particular, if the second quantity is equal to $\infty$, then the first is as well.  Letting $\delta\to 0$, it follows that
\[
P_{Z_m}(\eta + t\phi) \geq P_{Z_m}(\eta) + t(\alpha - \eps).
\]
Taking the supremum over all $m\geq 1$ and using the fact that topological pressure is countably stable -- that is, that $P_Z = \sup_m P_{Z_m}$ \cite[Theorem 11.2(3)]{yP97} -- we obtain
\[
P_Z(\eta + t\phi) \geq P_Z(\eta) + t(\alpha-\eps);
\]
since $\eps>0$ was arbitrary, this establishes the first half of~\eqref{eqn:cone}.  For the second half, an analogous computation shows that
\[
m_P(Z_m, s, \eta + t\phi, N, \delta) \leq  m_P(Z_m, s - t(\beta+\eps), \eta, N, \delta),
\]
whence upon passing to the limits, taking the supremum, and sending $\eps\to 0$, we obtain~\eqref{eqn:cone}.
\end{proof}

Using Lemma~\ref{lem:bound-pressure}, it follows from~\eqref{eqn:infpos} that
\[
\inf_{q\in\RR^d}P_{Y(\gamma)} (\langle q,\Phi - \alpha*\Psi\rangle - tu) > \gamma \delta > 0,
\]
and in particular
\[
\inf_{q\in\RR^d}P(\langle q,\Phi - \alpha*\Psi\rangle - tu) > \gamma \delta > 0.
\]
Now Theorem~\ref{thm:main2} and~\eqref{eqn:posenough} give $P_{\hat K(\alpha)}(-tu) = P_\Ka(-tu) >  0$, and Proposition~\ref{prop:bowen} shows that $\dim_u \hat K(\alpha) \geq t$.  Since $t < \inf_q T_u(q)$ was arbitrary, this completes the proof of the second claim.

For the third claim, we observe that $\dim_u \mu \leq \dim_u \hat K(\alpha)$ for every $\mu$ supported on $\hat K(\alpha)$, so it suffices to find measures with sufficiently large $u$-dimension.  Fixing $t<\dim_u \hat K(\alpha)$, Proposition~\ref{prop:bowen} gives $P_{\hat K(\alpha)}(-tu) > 0$.  As discussed after~\eqref{eqn:posenough}, Corollary~\ref{cor:intenough} implies that the result of Theorem~\ref{thm:main2} applies to $\hat K(\alpha)$, and in particular, by the last equality in~\eqref{eqn:main2} there exists an ergodic measure $\mu$ supported on $\hat K(\alpha)$ such that $h_\mu(f) - t\int u\,d\mu > 0$.  This implies that $\dim_u \mu = \frac {h_\mu(f)}{\int u\,d\mu} > t$, where the first equality follows from~\cite[Proposition 2.3]{vC11b}.  Since $t<\dim_u \hat K(\alpha)$ was arbitrary, this proves the third claim.

\subsection{Proof of results in \S \S \ref{sec:one-dim}--\ref{sec:higherdim}}

\begin{proof}[Proof of Theorem~\ref{thm:Lyapspec}]
Writing $u=\ph=\log \|Df\|$ and $\psi \equiv 1$, the first two equalities in~\eqref{eqn:Lyapspec} follow directly from the corresponding statements in Theorem~\ref{thm:dimu}.  For the third equality, we define a function $\bar T\colon \RR^d \to \RR$ by $\alpha \bar T(\bar q) = P(\bar q u) - \alpha \bar q$, and define $q=q(\bar q)$ by $q=\bar q + \bar T(\bar q)$.  Then we have
\[
P((q - \bar T(\bar q)) u - \alpha q ) = P(\bar q u) - \alpha(\bar q + \bar T(\bar q)) = 0,
\]
whence $\bar T(\bar q) = T_u(q)$ for the function $T_u$ in Theorem~\ref{thm:dimu}.  It follows that
\[
\dim_H \Ka = \inf_q T_u(q) = \inf_{\bar q} \bar T(\bar q),
\]
which proves the third equality in~\eqref{eqn:Lyapspec}.
\end{proof}

\begin{proof}[Proof of Theorem~\ref{thm:Birkspec}]
This is a corollary of Theorem~\ref{thm:dimu}.
\end{proof}

\begin{proof}[Proof of Theorem~\ref{thm:dimspec}]
The equivalence of the two expressions for $I$ follows from~\eqref{eqn:II} in Proposition~\ref{prop:cvp}, as follows.  Letting $\Phi=(\ph)$ and $\Psi=(\log\|Df\|)$, we see from~\eqref{eqn:predicted} that $\SSS(\alpha,0) = \inf \{ P(q(\ph - \alpha \log\|Df\|)) \mid q\in \RR\}$.  From~\eqref{eqn:I'}, we have
\[
I'(\Phi,\Psi) = \{\alpha\in\RR \mid P(q\ph - q\alpha \log \|Df\|) \geq 0 \text{ for all } q\in \RR \}.
\]
Since $t\mapsto P(q\ph - t\log\|Df\|)$ is a decreasing function of $t$ (by Lemma~\ref{lem:bound-pressure}), we see from the definition of $T(q)$ in~\eqref{eqn:Tq} that
\[
I'(\Phi,\Psi) = \{\alpha\in\RR \mid T(q) \geq q\alpha \text{ for all } q\in \RR \}.
\]
By Proposition~\ref{prop:cvp} this is equal to $I(\Phi,\Psi) = \left\{ \frac{\int\ph\,d\mu}{\lambda(\mu)} \mid \mu\in\Mf(X) \right\}$, so the two expressions for $I$ are equivalent.

Using~\eqref{eqn:levelsetssame}, the first two equalities in~\eqref{eqn:dimspec} are immediate consequences of Theorem~\ref{thm:dimu}.  For the third equality, we relate $T(q)$ from Theorem~\ref{thm:dimspec} and $T_u(q)$ from Theorem~\ref{thm:dimu} as follows:  $T(q)$ is defined by
\[
P(q\ph - T(q)\log \|Df\|) = 0,
\]
while $T_u(q)$ is defined by
\[
P(q(\ph - \alpha \log \|Df\|) - T_u(q) \log \|Df\|) = 0.
\]
We see that $T(q) = T_u(q) + \alpha q$, and thus the last line of~\eqref{eqn:dimspec} is equal to $\inf_q T_u(q)$.
\end{proof}

\begin{proof}[Proof of Theorem~\ref{thm:Birkspec2}]
This result goes exactly as Theorem~\ref{thm:Birkspec} once we use the result from~\cite[Lemma 4.7]{vC11b} that $\hat K(\alpha) = \Ka$ for all $\alpha\neq \ph(p)$.
\end{proof}

\begin{proof}[Proof of Theorem~\ref{thm:higherdim}]
It follows from~\eqref{eqn:wkGibbs} that $K(\alpha) = \{ x \mid \frac 1n S_n \Phi(x) \to -\alpha \}$.  Theorem~\ref{thm:main2} does the rest.
\end{proof}

\begin{proof}[Proof of Theorem~\ref{thm:finer}]
This is a consequence of Corollary~\ref{cor:intenough}.
\end{proof}

\def\cprime{$'$} \def\cprime{$'$}


\end{document}